\tikzstyle{none}=[inner sep=0mm]
\definecolor{amethyst}{rgb}{0.6, 0.4, 0.8}
\numberwithin{equation}{section}
\theoremstyle{plain}
\newtheorem{thm}{Theorem}[section]
\newtheorem{conj}[thm]{Conjecture}
\newtheorem{cor}[thm]{Corollary}
\newtheorem{lem}[thm]{Lemma}
\newtheorem{prop}[thm]{Proposition}
\newtheorem{qst}[thm]{Question}
\theoremstyle{definition}
\newtheorem{dfn}[thm]{Definition}
\newenvironment{remark}
  {\pushQED{\qed}\rmkx}
  {\popQED\endrmkx}
\newenvironment{exm}
  {\pushQED{\qed}\examplex}
  {\popQED\endexamplex}
\newcommand{\N}{\mathbb{N}} 
\newcommand{\Z}{\mathbb{Z}} 
\newcommand{\Q}{\mathbb{Q}} 
\newcommand{\R}{\mathbb{R}} 
\newcommand{\Sph}{\ensuremath{\mathbb{S}}}
\newcommand{\Hyp}{\ensuremath{\mathbb{H}}}
\newcommand{\tA}{\mathtt{A}}
\newcommand{\tB}{\mathtt{B}}
\newcommand{\tD}{\mathtt{D}}
\newcommand{\tE}{\mathtt{E}}
\newcommand{\tH}{\mathtt{H}}
\newcommand{\tI}{\mathtt{I}}
\newcommand{\ord}[1]{m_{{#1}}} 
\newcommand{\mst}{{m_{s,t}}} 
\newcommand{\CCG}[1]{\mathfrak{C}{(#1)}} 
\newcommand{\CCGo}{\mathfrak{C}} 
\newcommand{\VC}{\mathcal{G}^{\mathrm{VC}}}  
\newcommand{\phee}{\varphi} 
\newcommand{\quer}[1]{\overline{#1}}
\newcommand{\hut}[1]{\widehat{#1}}
\newcommand{\til}[1]{\widetilde{#1}}
\newcommand{\spans}[1]{\langle {#1} \rangle}
\newcommand{\Menge}[1]{\{ #1 \}}
\newcommand{\leer}{\varnothing} 
\newcommand{\mc}{\mathcal} 
\newcommand{\mrm}{\mathrm} 
\newcommand{\into}{\hookrightarrow}
\DeclareMathOperator{\Aut}{Aut}
\DeclareMathOperator{\Isom}{Isom}
\newcommand{\PSL}{\mathbb{P}\mathrm{SL}}
\title[The Coxeter galaxy]{The galaxy of Coxeter groups}
\author{Yuri Santos Rego and Petra Schwer}
\address{Otto-von-Guericke-Universit\"at Magdeburg, \newline 
Fakult\"at f\"ur Mathematik -- Institut f\"ur Algebra und Geometrie, \newline 
PSF 4120, 39016 Magdeburg, Deutschland}
\email{yuri.santos@ovgu.de}
\email{petra.schwer@ovgu.de}
\subjclass[2020]{20F55}
\keywords{Coxeter groups, Isomorphism problem, profinite rigidity, pseudo-transposition, diagram twist.}
\begin{document}
\begin{abstract}
In this paper we introduce the galaxy of Coxeter groups (of finite rank) -- an infinite dimensional, locally finite, ranked simplicial complex which captures isomorphisms between finite rank Coxeter systems. In doing so, we would like to suggest a new framework to study the isomorphism problem for Coxeter groups. 
We prove some structural results about this space, provide a full characterization in small ranks and propose many questions. In addition we survey known tools, results and conjectures. 

Along the way we show profinite rigidity of triangle Coxeter groups -- a result which is possibly of independent interest.  
\end{abstract}
\thispagestyle{empty}

\maketitle

\begin{center}
	\emph{We dedicate this paper to the memory of Jacques Tits for his influential work on Coxeter groups. It was him who, as an honorary Bourbaki, coined the terms {Coxeter group and Coxeter diagram}. \footnote{See: S. Roberts, "Donald Coxeter: The man who saved geometry", 2003. \cite{Roberts}} }
\end{center}

\smallskip


\section{Introduction}
\label{sec:introduction}

The universe of finitely generated groups is pretty wild. Even distinguishing two of its groups algorithmically is a hopeless task since Dehn's isomorphism problem is undecidable by work of Novikov \cite{Novikov}. On the other hand, the region containing Coxeter groups is fairly well explored by group theorists and it is expected that an algorithm should exist that tells two given Coxeter groups apart.  

In this paper we introduce the \emph{Coxeter galaxy}  - a simplicial complex which encodes Coxeter systems of finite rank and non-type preserving isomorphisms between the underlying Coxeter groups. This space provides a framework to study the isomorphism problem in the class of Coxeter groups as well as  related more refined questions. 

To be a little more precise, the Coxeter galaxy is the flag simplicial complex $\mc{G}$ whose vertices are (graph-isomorphism classes of) Coxeter systems of finite rank, two of which are connected if their underlying Coxeter groups are isomorphic as abstract groups. Solving the isomorphism problem amounts to algorithmically being able to determine the connected components of the galaxy.  
This approach may a priori seem to be just packaging. However, it makes the isomorphism problem more tractable, yields some obvious refinements and  partial versions of the isomorphism problem and allows to naturally formulate related questions. 

\subsection{Structure of the galaxy}
The Coxeter galaxy is  automatically organized into horizontal layers  $\mc{G}_{k}$ containing all Coxeter systems of rank $k$; see \cref{def:Layer}. Its connected components typically span over several layers as distinct Coxeter systems for a given group may have different ranks. 
The subcomplex spanned by Coxeter systems of rank at most $k$ is denoted by $\mc{G}_{\leq k}$. A solution to the isomorphism problem for (finitely generated) Coxeter groups is then equivalent to a solution to all  `height-$k$ restricted isomorphism problems', i.e., for all $\mc{G}_{\leq k}$ for every $k \in \N$. Clearly, a full description of connected components of the galaxy would yield a complete classification of isomorphic Coxeter groups admitting nonisomorphic Coxeter systems. 

In this paper we address multiple questions about the general structure of the Coxeter galaxy. As a consequence of results in the literature, we show that the Coxeter galaxy is locally finite, with finite dimensional connected components; see \cref{subsec:components} and \cref{thm:FinitenessConnComps}. We also identify some useful subspaces, such as arbitrarily large `clusters of (irreducible) little stars' (\cref{lem:starlet}) or the \emph{vertical core} of the galaxy (\cref{def:Core}). 
The latter is a subcomplex obtained by deleting certain vertical edges of the galaxy. We prove that it has the same connected components as $\mc{G}$ (\cref{prop:verticalCore}). Both the galaxy and the vertical core are disconnected and infinite dimensional (\cref{cor:GalaxyandVCareinfinitedimensional}). It would be interesting to know whether the vertical core is a deformation retract of the galaxy. 
We summarize our main structural findings in the following theorem. 

\begin{thm}[{Structure of the Coxeter galaxy $\mc{G}$}]
\label{thm:main-structure}
 {\ }
     \begin{enumerate}
         \item The galaxy $\mc{G}$ is a locally finite, infinite dimensional simplicial complex with finite connected components, all of which are simplices. (See \cref{thm:FinitenessConnComps} and \cref{cor:GalaxyandVCareinfinitedimensional}.)
         \item Solving the isomorphism problem reduces to algorithmically computing (a spine of) the vertical core. (See \cref{prop:verticalCore}.) 
         \item The subcomplex $\mc{G}_{\leq 3}$  is a 1-dimensional complex and equal to its vertical core. Furthermore, the isomorphism problem is decidable for groups in this subcomplex. (See Corollaries~\ref{cor:Rank3is1D} and~\ref{cor:SolutionIsoProbRk3}.) 
     \end{enumerate}
\end{thm}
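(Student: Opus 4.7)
The plan is to treat the three items separately, leaning on the auxiliary results cited in the statement (\cref{thm:FinitenessConnComps}, \cref{prop:verticalCore}, \cref{cor:GalaxyandVCareinfinitedimensional}, \cref{cor:Rank3is1D}, \cref{cor:SolutionIsoProbRk3}) as black boxes; since the theorem is a consolidation of results proved elsewhere in the paper, the argument really amounts to careful packaging.

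For item~(1), I would start from the definition: the $1$-skeleton of $\mc{G}$ encodes the equivalence relation ``having isomorphic underlying Coxeter group'', so its equivalence classes are cliques, and because $\mc{G}$ is flag, each maximal clique spans a simplex. Hence the connected components of $\mc{G}$ coincide with its maximal simplices. Finiteness of every such simplex is exactly \cref{thm:FinitenessConnComps}, which records that a finitely generated Coxeter group admits only finitely many Coxeter systems up to diagram isomorphism; local finiteness of $\mc{G}$ is then immediate. For infinite-dimensionality, the clusters of little stars produced by \cref{lem:starlet} furnish connected components of arbitrarily high dimension, yielding \cref{cor:GalaxyandVCareinfinitedimensional}. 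Item~(2) reduces to a single observation: \cref{prop:verticalCore} identifies the connected components of the vertical core with those of $\mc{G}$, so deciding which component contains a given Coxeter system---the isomorphism problem---is equivalent to the same question inside the vertical core. Consequently, any algorithm that outputs a spine of the vertical core solves the isomorphism problem.

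For item~(3), I would combine two facts about the rank-$\leq 3$ regime. First, \cref{cor:Rank3is1D} rules out triples of pairwise connected rank-$\leq 3$ vertices, so every simplex of $\mc{G}_{\leq 3}$ has dimension at most one; together with the definition of the vertical core (\cref{def:Core}), it also follows that no edge in $\mc{G}_{\leq 3}$ is among those deleted when passing to the core, so $\mc{G}_{\leq 3}$ coincides with its vertical core. Second, \cref{cor:SolutionIsoProbRk3} supplies an explicit decision procedure for this range. The hardest conceptual step, which is absorbed into \cref{cor:Rank3is1D}, is ruling out hidden isomorphisms between small-rank Coxeter systems with non-isomorphic diagrams; this is where the profinite rigidity of triangle Coxeter groups promised in the abstract is expected to carry the weight.
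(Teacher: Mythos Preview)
Your proposal is correct and matches the paper's approach: \cref{thm:main-structure} is a summary theorem, and the paper itself does not give a separate proof beyond the parenthetical references to \cref{thm:FinitenessConnComps}, \cref{cor:GalaxyandVCareinfinitedimensional}, \cref{prop:verticalCore}, \cref{cor:Rank3is1D}, and \cref{cor:SolutionIsoProbRk3}. Your unpacking of each item---cliques plus flag gives simplices, finiteness of components gives local finiteness, starlets give unbounded dimension, and the profinite rigidity underlying \cref{cor:Rank3is1D} carries the weight in rank~$\leq 3$---is exactly the content of those cited results.
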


\noindent Many questions, such as \cref{qst:Growth of dimension} about the growth rate of the dimensions of each layer, are posed throughout the text. See, for example, Questions~\ref{qst:refinedGalactic}, \ref{qst:refinedSpine}, \ref{qst:diameter} and \ref{qst:reachability}. 

\subsection{Layers and profinite rigidity}   
The last item in \cref{thm:main-structure} uses profinite techniques; cf. \cref{subsec:rank3} for basic notions and references. We paint a complete picture of the first few layers $\mc{G}_{\leq 3}$ of the galaxy in \cref{subsec:rank3} by showing that Coxeter groups in $\mc{G}_{\leq 3}$ are {profinitely rigid} within that family. 
This extends a result of Bridson--Conder--Reid \cite[Section~8]{BridsonConderReid}. 
We summarize Theorems~\ref{thm:ProfRigHypRank3} and \ref{thm:analogueBCR} as \cref{thm:main-profinite} below. 

\begin{thm}[Profinite rigidity in rank $\leq 3$]
\label{thm:main-profinite}
{\ }

\noindent    Coxeter groups with diagrams in $\mc{G}_{\leq 3}$ are profinitely rigid within this class, that is, two such groups are isomorphic if and only if their profinite completions agree. 
    Moreover, isomorphism of triangle groups can be decided by comparing the multisets of edge labels of their defining complete Coxeter graphs as defined in \cref{def:CCG}.
\end{thm}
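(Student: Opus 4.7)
I would work rank by rank. In ranks $0$ and $1$ the only groups are the trivial group and $\mathbb{Z}/2$; both are finite, hence self-completing and trivially profinitely rigid within any class containing them. In rank $2$ every group is a dihedral $D_m$ with $m \in \mathbb{Z}_{\geq 2} \cup \{\infty\}$, and $m$ is detected by the profinite completion (via its order or its abelianization), so rigidity holds on the layer $\mathcal{G}_{2}$. The substantive case is rank $3$, where every Coxeter system is a triangle group $W(p,q,r)$; I would split along the sign of $1 - 1/p - 1/q - 1/r$ into spherical, Euclidean, and hyperbolic subcases.

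\textbf{Three geometries.} In the spherical regime ($1/p+1/q+1/r>1$) the group is finite and drawn from a short explicit list, whose entries are distinguished by their orders, so rigidity within the spherical sublist is immediate. The three Euclidean groups $W(2,3,6)$, $W(2,4,4)$, $W(3,3,3)$ are distinguished from each other and from everything else by the $\mathbb{Z}$-rank of their abelianization together with finite-quotient data, both visible in the profinite completion. The real work is in the hyperbolic regime, where $W(p,q,r)$ fits into a short exact sequence
\[
1 \longrightarrow \Delta(p,q,r) \longrightarrow W(p,q,r) \longrightarrow \mathbb{Z}/2 \longrightarrow 1,
\]
with $\Delta(p,q,r)$, the kernel of the sign character, being the orientation-preserving cocompact Fuchsian triangle group. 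My strategy is to bootstrap the profinite rigidity of Fuchsian triangle groups established by Bridson--Conder--Reid \cite{BridsonConderReid} up to the reflection group: given a topological isomorphism $\widehat{W(p,q,r)} \cong \widehat{W(p',q',r')}$, I would identify the closure of $\Delta$ canonically inside $\widehat{W}$ as an index-$2$ closed subgroup singled out by a profinite invariant (when $W^{\mathrm{ab}}$ admits a unique index-$2$ subgroup this is automatic; when its $2$-rank is larger I would isolate $\Delta$ by torsion profile or by the structure of its abelianization), deduce an isomorphism $\widehat{\Delta(p,q,r)} \cong \widehat{\Delta(p',q',r')}$, and conclude via \cite{BridsonConderReid} that $\{p,q,r\} = \{p',q',r'\}$.

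\textbf{Multiset invariant and main obstacle.} The final clause follows once the classification is in place: the multiset $\{p,q,r\}$ is intrinsic to $W$ because the three conjugacy classes of maximal finite subgroups of a (hyperbolic or Euclidean) triangle Coxeter group are dihedral of orders $2p, 2q, 2r$, while analogous finite invariants distinguish the spherical cases; hence $W(p,q,r) \cong W(p',q',r')$ if and only if $\{p,q,r\} = \{p',q',r'\}$, giving the announced decision procedure. The main obstacle is the hyperbolic bootstrap, namely canonically pinning down the Fuchsian subgroup $\Delta$ inside $W$ (and inside $\widehat{W}$) whenever the abelianization has $2$-rank exceeding one, so that the Fuchsian rigidity of \cite{BridsonConderReid} transfers faithfully to the reflection group. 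If canonicity fails in some parity case, the fallback is to adapt the Bridson--Conder--Reid strategy directly to $W$, for instance by counting homomorphisms into suitable finite groups such as $\mathrm{PGL}_2(\mathbb{F}_q)$, thereby bypassing the descent to $\Delta$.
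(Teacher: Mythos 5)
Your overall architecture matches the paper's: reduce to rank $3$, split by geometry, and bootstrap the hyperbolic case from Bridson--Conder--Reid via the index-two von Dyck (rotation) subgroup. But there are two genuine gaps. First, the hyperbolic step: you correctly flag that you cannot canonically pin down the rotation subgroup inside $\hut{W}$ when the abelianization has $2$-rank bigger than one, but you do not close this gap, and your fallback (counting homomorphisms to $\mathrm{PGL}_2(\F_q)$) is only a sketch. The paper's resolution does not require canonicity at all: one uses the correspondence between open subgroups of $\hut{W}$ and finite-index subgroups of $W$ (\cite[Proposition~2.3]{BridsonConderReid}) to realize $\Gamma(p',q',r')$ as \emph{some} index-two subgroup of $\Delta(p,q,r)$, and then invokes Bridson--McReynolds--Reid--Spitler: a hyperbolic von Dyck group has exactly four index-two extensions up to isomorphism, and these four have pairwise non-isomorphic profinite completions (\cite[Proposition~3.1 and Lemma~4.3]{BMcRRS}). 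That classification is the missing ingredient; without it (or a worked-out substitute) your hyperbolic case is incomplete.

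Second, the Euclidean/virtually-abelian case is mishandled. The abelianization of any Coxeter group is a finite elementary abelian $2$-group, so ``the $\Z$-rank of the abelianization'' is zero across the board and distinguishes nothing. Moreover your Euclidean list omits $\Delta(2,2,\infty)\cong D_\infty\times C_2$, which satisfies $\tfrac12+\tfrac12+\tfrac1\infty=1$ and has abelianization $C_2^3$ --- the \emph{same} as $\Delta(2,4,4)$ --- so abelianizations alone cannot separate them; the paper does so by observing that $\hut{\Delta(2,4,4)}\cong\hut{\Z}^2\rtimes D_4$ contains elements of order four while $(\hut{\Z}\rtimes C_2)\times C_2$ does not. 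You also assert without argument that Euclidean groups are distinguished ``from everything else'': the needed step is that being virtually abelian is detected here (the paper's trick of raising elements to the power $\vert W_0\vert$ inside $\hut{\Z}^n\rtimes W_0$), which is also what rules out a hyperbolic triangle group sharing a completion with $D_\infty$ in layer $2$. Finally, for the multiset clause your maximal-finite-subgroup invariant is a genuinely different (and plausible) route from the paper's, which instead combines Charney--Davis rigidity for the cocompact cases with the Mihalik--Ratcliffe--Tschantz matching theorems and an amalgam/Bass--Serre argument for the $\infty$-labeled cases; but you would still need to verify that the maximal finite subgroups are exactly the edge parabolics up to conjugacy and that the number of $\infty$-labels is recovered.
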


As a consequence, we obtain a new proof that the `height-$3$ restricted isomorphism problem' is decidable, see item~(iii) of \cref{thm:main-structure}. The proof of \cref{thm:main-profinite} highlights how profinite completions can yield information about the isomorphism type of Coxeter groups. We expect that the idea of using finite quotients (besides finite subgroups) to distinguish Coxeter groups will lead to further developments.

\subsection{Navigating the galaxy} 
The classical approach to the isomorphism problem aims to reduce the problem to a specific subclass of Coxeter groups (e.g. angle- or reflection-compatible Coxeter groups) and to finding explicit moves between such systems. These moves often amount to explicit manipulations of the underlying defining diagram and correspond to edges in the Coxeter galaxy. 
Finding all such moves is an interesting problem on its own which we posed as \cref{qst:reachability}. 
This reachability problem immediately raises two related questions. The first can be viewed as an `edge coloring problem' for the galaxy, where colors correspond to the types of moves that yield a certain edge. 

\begin{qst}[Coloring problem]
Suppose \cref{qst:reachability} is solvable. Can the algorithm also output which kinds of edges appear in a path between vertices of the galaxy? Similarly, can we classify which types of moves (i.e., maps between Coxeter systems) transform a complete Coxeter graph into another without changing the underlying Coxeter group?
\end{qst}

This question has been (implicitly) asked multiple times in the literature, and has been answered positively in some cases. For instance, known results e.g., by Mihalik, Ratcliffe, and Tschantz~\cite{MihalikRatcliffeTschantz,MihalikRatcliffe-Ranks}, 
imply that the so-called blow-ups along pseudo-transpositions, introduced by Howlett and Mühlherr in \cite{HowlettMuehlherr} (see also \cite{BernhardSurvey}) suffice to vertically navigate the galaxy. See \cref{cor:navigatingverticallayers} and \cref{thm:downwardsnavigation} for precise statements. In particular, each vertical edge decomposes as a sequence of vertical edges that roughly correspond to blow-ups between adjacent layers. Hence blow-ups and -downs can be thought of as vertical atomic moves. The corresponding horizontal question is still open. 

\begin{qst}[Atomic horizontal moves]
    Is it possible to determine all existing kinds of horizontal moves? Does there exist a finite list of atomic moves such that every move decomposes as a concatenation of those? 
\end{qst}

The current state of the art revolves around developments related to M\"uhlherr's twist conjecture; cf. \cref{conj:ac-twistConjecture}. See, for example,  \cite{MuehlherrWeidmann,BMMN,BernhardSurvey,CapraceMuehlherr2-Spherical,RatcliffeTschantz,CapracePrzytyckiTwistRigid,WeigelTwistTriangle,NuidaSurvey,HuangPrzytycki}. \cref{sec:History} contains an in-depth discussion of this conjecture along with illustrations of other known horizontal moves. Since the Coxeter galaxy is locally finite, though, we ask the following weaker version of the twist conjecture: is there an upper bound on the total number of possible moves between Coxeter systems?

\subsection{Some comments on methods and proofs} Most of the present work draws from multiple sources. We attempted a thorough overview of the literature in \cref{sec:History}, but also throughout the body of the work while translating some of the known results into our language.

The motivating problem for our work is to paint a complete picture of Coxeter systems that have isomorphic underlying groups. Obviously, some complexity reductions towards such an algorithm might be very useful. There are some known reduction steps in the literature that motivate the study of groups with certain restrictions on their Coxeter generating sets, such as reflection- or angle-compatibility; cf. \cref{subsec:twistConj}. However, since the main reason for introducing our framework is to give a {complete} picture of all Coxeter systems (and how to navigate between them), we intentionally avoid such reductions, or aim at translating some reductions into our language with explicit arguments. As a byproduct, all results proved here either use new methods (as those in \cref{subsec:rank3}) or employ results whose proof techniques are independent of developments around the twist conjecture, e.g. \cref{cor:navigatingverticallayers}. We make no use of automorphisms of Coxeter groups.

\subsection{Organization of the paper} 
Notation is set and some basic results are recalled in \cref{sec:Prelim}. The Coxeter galaxy $\mc{G}$ and its vertical core $\VC$ are defined in \cref{subsec:galaxyDefinition} with reformulations of the isomorphism problem in \cref{subsec:Isoproblem}. \cref{sec:FirstExpedition} is devoted to collecting results on vertical moves in \cref{subsec:moves} and proving structural results about the Coxeter galaxy, such as local finiteness and properties of useful subcomplexes in \cref{subsec:components}. In \cref{subsec:rank3} we give a full description of the galaxy in smaller ranks along with results on profinite rigidity of Coxeter groups. The final \cref{sec:History} is an exposition on the known horizontal moves between Coxeter systems (see \cref{subsec:horizontalmoves}), with focus on the twist conjecture in \cref{subsec:twistConj}. We also discuss some reductions of the isomorphism problem in \cref{subsec:ReductionIsoProblem} and summarize other known results in \cref{subsec:survey}. 


\section{Preliminaries} 
\label{sec:Prelim}

In order to fix notation and to avoid ambiguity we recall basic definitions and results around Coxeter groups. Please note that all Coxeter groups considered in this work are of finite rank. Feel free to skip and refer back to this section later if you prefer to start the galactic journey right away. 

\begin{dfn}[Coxeter matrix]
\label{def:CoxeterMatrix}
A \emph{Coxeter matrix} $M_S$ for a given finite set $S$ is a symmetric $\vert S \vert \times \vert S \vert$-matrix whose entries $\mst$, with  $s,t \in S$, satisfy the following conditions: all diagonal entries $\ord{s,s}$ equal $1$ and all off-diagonal entries belong to $\N_{\geq 2} \cup \Menge{\infty}$.
\end{dfn}

\begin{dfn}[Coxeter groups and systems] 
\label{def:CoxeterSystem}
A \emph{Coxeter system} consists of a triple $(W,S,M_S)$ where $W$ is a group, $S$ a finite subset of $W$ is a generating set, and $M_S$ is a Coxeter matrix for $S$ for which $W$ admits a presentation of the following form:
\[
 W \cong \spans{s \in S \mid (st)^\mst \text{ for every pair } s,t\in S \text{ with } \mst \neq \infty}. 
 \]
A \emph{Coxeter group} is the underlying group $W$ arising from a Coxeter system $(W,S,M_S)$. The set $S \subseteq W$ is usually called a \emph{Coxeter generating set} for $W$. The \emph{type} (or \emph{Coxeter type}) of a Coxeter system $(W,S,M_S)$ is the pair $(S,M_S)$. We say that two Coxeter systems $(W_1,S_1,M_{S_1})$ and $(W_2,S_2,M_{S_2})$ \emph{have the same type} (or are \emph{isomorphic}) if there is a bijection $f \colon S_1 \to S_2$ such that, for each $s,t\in S_1$, the corresponding entry $m_{f(s),f(t)}$ in $M_{S_2}$ is equal to the original entry $m_{s,t}$ in $M_{S_1}$.
\end{dfn} 

For brevity, one typically omits the Coxeter matrix in the notation and speaks about `the' Coxeter system $(W,S)$. We remark that two systems of the same type have isomorphic underlying Coxeter groups.

\begin{dfn}[Spherical Coxeter system, longest element] 
\label{def:sphericalCoxeterSystem}
A Coxeter system $(W,S)$ is called \emph{spherical} if its underlying Coxeter group $W$ is finite. The \emph{longest element} of $W$ is the unique element $w_0 \in W$ of maximal word length with respect to the Coxeter generating set $S$. (Such a $w_0$ exists if and only if $W$ is spherical~\cite[Lemma~4.6.1]{DavisBook}.)
\end{dfn}

\begin{dfn}[Parabolics] 
\label{def:parabolic}
A subgroup $W_T = \spans{T} \leq W$ generated by $T \subseteq S$ in a Coxeter system $(W,S,M_S)$ is called a \emph{standard parabolic} (or \emph{special} or \emph{visible}) subgroup of $W$. Any $P\leq W$ conjugate to some standard parabolic is called \emph{parabolic}. By \cite[Theorem~4.1.6]{DavisBook}, a parabolic subgroup is (isomorphic to) a Coxeter group with underlying system $(W_T,T,M_T)$, where $M_T$ is the (sub)matrix obtained from $M_S$ by taking only entries in $T\times T$.
\end{dfn} 

As usual in the theory of Coxeter groups, it is helpful to encode Coxeter systems as certain graphs. Two popular such graphs are the following. 

\begin{dfn}[Coxeter--Dynkin diagram and defining graph] 
\label{def:CoxDiag}
The \emph{Coxeter--Dynkin diagram} of a Coxeter system $(W,S,M_S)$ is the undirected edge-labeled graph with vertex set $S$, where {distinct} $s,t\in S$ are joined by an edge if $m_{s,t}$ lies in $\N_{\geq 3} \cup \Menge{\infty}$, in which case the edge is unlabeled if $\mst=3$ and otherwise $m_{s,t}$ is the label of the given edge. \newline 
Similarly, the \emph{defining graph} of $(W,S,M_S)$ is the undirected labeled graph whose vertex set is $S$, but now with (distinct) vertices $s,t \in S$ connected by an edge labeled $m_{s,t}$ in case $m_{s,t} \neq \infty$.
\end{dfn}

\begin{remark}
Coxeter--Dynkin diagrams are in the literature sometimes called Dynkin diagrams, Coxeter diagrams, or C-diagrams. 
The {defining graph} of a Coxeter system is in some places referred to as its presentation diagram or P-diagram. 
Here we stick to the notions defined in \cref{def:CoxDiag}. 
\end{remark}

Since it is useful to consider all orders $\mst$ at once (hence have all of them depicted), we introduce a third type of graph representing Coxeter systems.

\begin{dfn}[Complete Coxeter graph] \label{def:CCG}
Given a Coxeter system $(W,S,M_S)$, its \emph{complete Coxeter graph}, denoted by $\CCG{W,S}$, is the complete simplicial graph with vertices the elements of $S$ and with edges labeled by the orders $\mst \in M_S$ for $s\neq t$. When the Coxeter system is implicit from context, we sometimes also write $\CCG{W}$ or simply $\CCGo$ for the complete Coxeter graph of $(W,S,M_S)$. The (Coxeter) \emph{type} of $\CCG{W,S}$ is the (Coxeter) type of the underlying system $(W,S,M_S)$. (Equivalently, the type of $\CCG{W,S}$ is its \emph{isomorphism type} as an edge-labeled graph.)
\end{dfn}

For easier visualization, we draw in a complete Coxeter graph its $\infty$-labeled edges dashed, and its $2$-labeled edges thick and grey. We have illustrated the three types of graphs with an example in Figure~\ref{fig:alldiagrams}. 

\begin{figure}[h] \label{fig:alldiagrams}
\centering
\begin{overpic}[scale=.35]{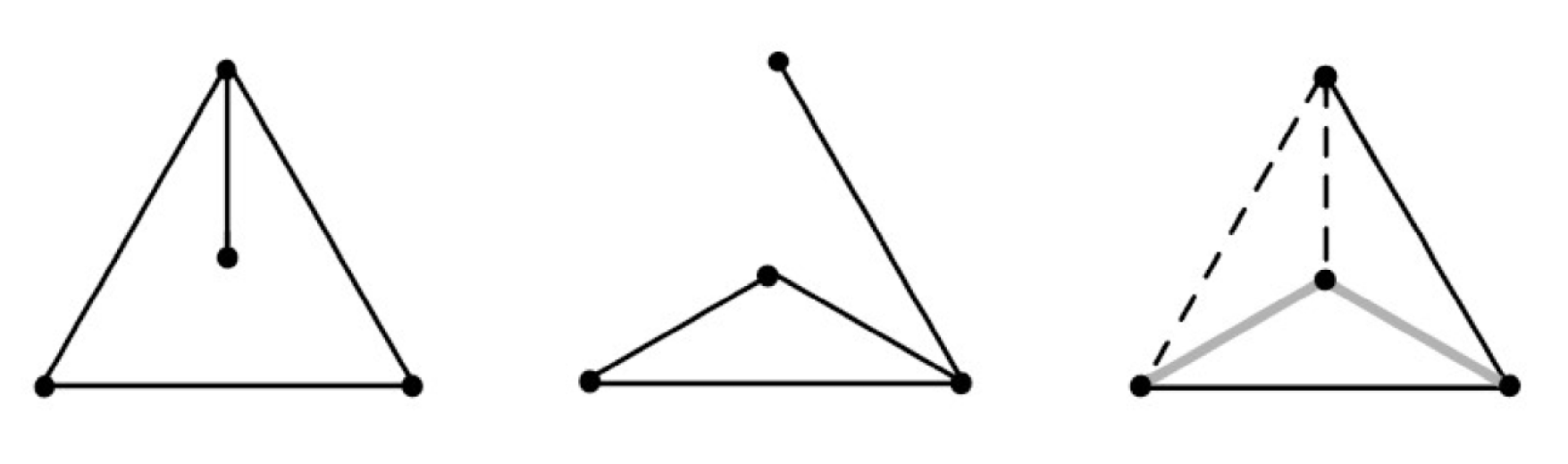}
\put(5.5,16){$\infty$}
\put(13,18){$\infty$}
\put(20.5,16){$5$}
\put(43,10){$2$}
\put(54,10){$2$}
\put(49,4){$3$}
\put(56.1,14){$5$}
\put(77,18){$\infty$}
\put(89,18){$5$}
\put(82.9,20){$\infty$}
\put(79.5,10){$2$}
\put(88,10){$2$}
\put(83.5,4){$3$}
\end{overpic}
    \caption{From left to right: the Coxeter--Dynkin diagram, the defining graph, and the complete Coxeter graph for the system giving the Coxeter group $W=\spans{ r,s,t,u \mid r^2, s^2, t^2, u^2, (rs)^3, (st)^5, (ru)^2, (su)^2 }$.}
\end{figure}

The term `diagram' is used interchangeably with `graph' when speaking about any such graphs. We constantly interchange between Coxeter systems and their complete Coxeter graphs. Any concept introduced for a system (e.g., sphericity) is also interpreted as a concept for the diagram and vice-versa. 

We say that a Coxeter system and the associated Coxeter group is \emph{reducible} if its Coxeter--Dynkin diagram is disconnected. Otherwise it is called \emph{irreducible}.
Recall that a group $G$ is called \emph{directly decomposable} if there exist nontrivial  $H_1, H_2 \leq G$ such that $G \cong H_1 \times H_2$; otherwise it is called \emph{directly indecomposable}. 
The following well-known result (cf. \cite[Chapter~VI]{BourbakiLie4-6}, \cite{MihalikRatcliffeTschantz}, or~\cite[Theorem~3.3]{NuidaIndecomp}) characterizes directly decomposable Coxeter groups. 

\begin{thm}[{Few irreducibles decompose}] \label{thm:NuidaDirect}
Let $(W,S)$ be an {irreducible} Coxeter system. Then $W$ is directly decomposable if and only if $(W,S)$ is of type $\tB_{2k+1}$ or $\tI_2(4k+2)$, for some $k \geq 1$, or of type $\tE_7$ or $\tH_3$. 
\end{thm}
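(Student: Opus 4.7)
The plan is to prove the two implications separately. For the ``if'' direction, I would construct explicit direct product decompositions in each case. For $\tI_2(4k+2)$, realizing $W$ as the dihedral group of order $8k+4$ with rotation generator $r$ of order $4k+2$, the central involution $r^{2k+1}$ has trivial intersection with the subgroup $\langle r^2,s\rangle\cong W(\tI_2(2k+1))$ (since $2k+1$ is odd), and the product of these two subgroups exhausts $W$. For $\tB_{2k+1}=(\Z/2)^{2k+1}\rtimes S_{2k+1}$, the longest element $w_0=(-1,\ldots,-1)\cdot e$ is central and lies outside the sign-kernel $W(\tD_{2k+1})$ because $2k+1$ is odd, giving $W(\tB_{2k+1})\cong \langle w_0\rangle\times W(\tD_{2k+1})$. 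For $\tE_7$ and $\tH_3$, I would invoke the classical isomorphisms $W(\tE_7)\cong \Z/2\times \mathrm{Sp}_6(\F_2)$ and $W(\tH_3)\cong \Z/2\times A_5$, each of which exhibits $w_0$ as generating a central $\Z/2$ direct factor.

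For the ``only if'' direction, assume $(W,S)$ is irreducible and $W\cong H_1\times H_2$ with both factors nontrivial. The strategy is to reduce to $W\cong \Z/2\times H$ with the $\Z/2$ factor generated by a central involution, and then invoke the classification of finite irreducible Coxeter groups. By Richardson's theorem, $Z(W)$ is trivial except when $W$ is spherical with $w_0$ acting as $-\id$ in its geometric representation, in which case $Z(W)=\langle w_0\rangle$ has order $2$; in particular, the infinite case admits no nontrivial decomposition. Writing each $s\in S$ as $s=a_s b_s$ with $a_s=\pi_1(s)$ and $b_s=\pi_2(s)$ both involutions, the connectedness of the Coxeter--Dynkin diagram combined with $m_{s,t}\geq 3$ on every edge precludes $S$ from splitting cleanly into nonempty ``pure'' parts $S\cap H_1$ and $S\cap H_2$. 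Mixed generators (with $a_s,b_s$ both nontrivial) must therefore occur, and a careful analysis shows that their $H_1$-components all coincide with a single central involution of $W$, forcing $W\cong \Z/2\times H$.

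The hardest step will be establishing this last reduction and then carrying out the classification check. Among the finite irreducible types, those with trivial center---$\tA_n$ for $n\geq 2$, $\tD_{2k+1}$, $\tE_6$, and $\tI_2(2k+1)$---are excluded immediately. Among the center-$\Z/2$ types, one must separate the four families listed in the theorem from $\tB_{2k}$, $\tD_{2k}$, $\tE_8$, $\tF_4$, $\tH_4$, and $\tI_2(4k)$ by deciding whether the central extension $1\to\langle w_0\rangle\to W\to W/\langle w_0\rangle\to 1$ splits. I would handle this case-by-case, exhibiting explicit complements in the four ``yes'' families (as in the first paragraph) and obstructing a splitting in the remaining six by comparing $2$-Sylow subgroups or by inspecting conjugacy classes of involutions in $W/\langle w_0\rangle$. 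The delicate aspect throughout the ``only if'' direction is the handling of mixed generators: extracting from the Coxeter relations $(st)^{m_{s,t}}=1$ that the $H_1$-components $a_s$ of all mixed $s$ must agree and be central in $W$.
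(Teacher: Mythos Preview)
The paper does not prove this theorem; it is quoted as a known result with references to Richardson, Mihalik--Ratcliffe--Tschantz, and Nuida. So there is no in-paper proof to compare against, and your proposal should be judged on its own.

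Your ``if'' direction is fine. The genuine gap is in the ``only if'' direction, and it sits exactly where you yourself say the argument is ``hardest'' and ``delicate'': the reduction of an arbitrary decomposition $W\cong H_1\times H_2$ to one of the form $\Z/2\times H$ with the $\Z/2$ central. You assert that ``a careful analysis shows that their $H_1$-components all coincide with a single central involution of $W$,'' but no such analysis is given, and this is not a routine bookkeeping exercise---it is essentially the entire content of the cited theorems. Concretely, knowing that some $s_0\in S$ is mixed gives you commuting involutions $a_{s_0}\in H_1$ and $b_{s_0}\in H_2$, but nothing yet forces $a_{s_0}$ to be central in $H_1$, nor forces the $a_s$ of different mixed generators to agree; the Coxeter relation $(st)^{m_{s,t}}=1$ only tells you that the orders of $a_sa_t$ and $b_sb_t$ have $\mathrm{lcm}$ equal to $m_{s,t}$, which leaves plenty of room. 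Relatedly, your sentence ``in particular, the infinite case admits no nontrivial decomposition'' does not follow from $Z(W)=1$: centerless groups can perfectly well be direct products (e.g.\ $S_3\times S_3$), so the infinite case still needs the reduction you have not supplied---and when some $m_{s,t}=\infty$ the relation-chasing you sketch yields no constraint at all along that edge.

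In the references the paper cites, this reduction is established by genuinely different means: either via the geometry of the Tits representation (any proper direct factor of an infinite irreducible $W$ would have to fix a nonzero vector, contradicting irreducibility), or via structural results on the reflection set and maximal finite normal subgroups. Your outline stops precisely where that real work begins; to make the proposal complete you would need to reproduce one of those arguments or find a new one.
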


Note that an infinite Coxeter group is reducible if and only if it is directly decomposable.

Similarly, decompositions of Coxeter groups into free products are well understood. 
Recall that $G$ \emph{decomposes} (or \emph{splits}) \emph{as an amalgam} of $A$ and $B$, amalgamated along $C$, if there exist subgroups $A, B, C \leq G$ with $A \neq 1 \neq B$ and $C \leq A \cap B$ such that $G \cong A \ast_C B$. The following was shown in~\cite{MihalikTschantzVisual} using Bass--Serre theory. 

\begin{thm}[{Splittings along Coxeter subgroups}] \label{thm:MihalikTschantzVisual}
A Coxeter group $W$ splits as an amalgam along a {finite} subgroup if and only if there exist nontrivial parabolic subgroups $W_1, W_2 \leq W$ and a common {spherical} parabolic $P \leq W_1 \cap W_2$ such that $W \cong W_1 \ast_{P} W_2$.
\end{thm}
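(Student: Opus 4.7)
The forward direction is essentially immediate: a spherical parabolic is by definition a finite Coxeter group, so any amalgamated decomposition $W \cong W_1 \ast_P W_2$ with $W_1, W_2$ nontrivial and $P$ spherical is automatically a splitting along a finite subgroup.

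For the converse, suppose $W \cong A \ast_C B$ with $A, B$ nontrivial and $C$ finite. The plan is to invoke Bass--Serre theory to upgrade this abstract splitting to a visible one. Let $T$ be the associated Bass--Serre tree, on which $W$ acts simplicially and (after subdivision) without inversions, with edge stabilizers conjugate to $C$ and vertex stabilizers conjugate to $A$ or $B$. Each $s \in S$ is an involution, so $\langle s \rangle$ is finite and fixes at least one vertex of $T$. Moreover, by a classical theorem of Tits, every finite subgroup of a Coxeter group is contained, up to conjugation, in a standard spherical parabolic; applying this to $C$ itself already produces a natural candidate for the spherical parabolic $P$.

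The central task is to convert the tree action into a visible combinatorial splitting of the Coxeter diagram. I would fix an edge $e$ of $T$ with endpoints $v_A, v_B$ of the two orbit types and consider $S_i := \{s \in S : s \cdot v_i = v_i\}$ for $i = A, B$. The intersection $S_A \cap S_B$ lies in the stabilizer of $e$, which is a conjugate of $C$, and is therefore spherical. Assuming the union $S_A \cup S_B$ exhausts $S$, the subgroups $W_i := \langle S_i \rangle$ are standard parabolics, and the Bass--Serre reconstruction theorem, combined with the fact that $S$ generates $W$, delivers the desired $W \cong W_1 \ast_P W_2$ with $P$ a spherical parabolic contained in $W_1 \cap W_2$.

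The main obstacle is precisely the exhaustiveness $S = S_A \cup S_B$: there is no a priori reason that every Coxeter generator fixes one of the two specific endpoints of a given edge $e$, since its fixed subtree might be disjoint from $e$. Overcoming this seems to require either a clever choice of $e$ inside a suitable minimal $W$-invariant subtree, or an accessibility-type reduction that replaces the original splitting by a combinatorially better one whose vertex stabilizers are generated by subsets of $S$. Once exhaustiveness is established, verifying that the $W_i$ are full standard parabolics and that the induced map from the amalgam is an isomorphism is standard Bass--Serre bookkeeping.
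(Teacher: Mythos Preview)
The paper does not supply a proof of this theorem; it is quoted as a result of Mihalik--Tschantz with only the remark that it was established ``using Bass--Serre theory.'' There is therefore nothing in the present paper to compare your proposal against beyond noting that your overall framework---analysing the $W$-action on the Bass--Serre tree of the given amalgam---is indeed the method attributed to the original source.

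Your self-diagnosed gap is genuine and is the heart of the matter. Fixing an edge $e$ and setting $S_i = \{s \in S : s \cdot v_i = v_i\}$ will typically \emph{not} exhaust $S$: a generator $s$ fixes some vertex of $T$, but its fixed subtree may miss both endpoints of $e$ entirely. Replacing ``fixes $v_i$'' by ``has fixed subtree on the $v_i$-side of $e$'' restores $S = S_A \cup S_B$, but then $\langle S_A \rangle$ need not fix any vertex of $T$, so the standard Bass--Serre reconstruction does not apply directly and one must argue separately that the natural map $W_{S_A} \ast_{W_{S_A \cap S_B}} W_{S_B} \to W$ is an isomorphism. This is where the real content of the Mihalik--Tschantz argument lies, and it is not mere bookkeeping: it uses the Coxeter presentation (via the deletion condition and the structure of finite parabolics) in an essential way. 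Your sketch correctly locates the difficulty but does not resolve it.
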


A group $G$ is said to be \emph{freely decomposable} if it can be written as a free product of two subgroups, otherwise $G$ is called freely \emph{indecomposable}.

\begin{cor}[{Free decompositions}]
A Coxeter group is freely decomposable if and only if it is a free product of Coxeter subgroups.
\end{cor}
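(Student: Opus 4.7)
The reverse implication is immediate from definitions: any free product of two nontrivial Coxeter subgroups is freely decomposable.

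For the forward direction, I would proceed as follows. Suppose $W \cong A \ast B$ for nontrivial $A$ and $B$. Then $A \ast B = A \ast_{\{1\}} B$ realises $W$ as an amalgamated product over the trivial, hence finite, subgroup. Applying \cref{thm:MihalikTschantzVisual} yields nontrivial parabolic subgroups $W_1, W_2 \leq W$ together with a common spherical parabolic $P \leq W_1 \cap W_2$ such that $W \cong W_1 \ast_P W_2$. Since parabolic subgroups are themselves Coxeter groups (\cref{def:parabolic}), it only remains to argue that $P$ may be chosen to be trivial, for then $W \cong W_1 \ast W_2$ is the desired free product of parabolic Coxeter subgroups.

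My plan for this final step is to appeal to a strengthening of \cref{thm:MihalikTschantzVisual}---namely, that the visual splitting can be chosen with the order of the spherical parabolic $P$ bounded above by the order of the original finite amalgamating subgroup---so that starting with the trivial amalgamation forces $P = \{1\}$. The principal obstacle I anticipate is verifying that such a strengthening is available. A back-up route goes through Dunwoody's accessibility theorem, which is applicable since Coxeter groups are finitely presented: accessibility produces a terminal graph-of-groups decomposition of $W$ over finite subgroups, and iterating \cref{thm:MihalikTschantzVisual} on its vertex groups forces this terminal decomposition to be visual, with parabolic vertex groups and spherical parabolic edge groups. Because $W$ is, by assumption, a free product, the terminal decomposition must feature at least one trivial edge group; collapsing along such an edge then exhibits $W$ as a free product of parabolic Coxeter subgroups.
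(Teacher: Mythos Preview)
The paper states this corollary without proof, treating it as an immediate consequence of \cref{thm:MihalikTschantzVisual}. You have correctly put your finger on the subtlety: as the theorem is paraphrased in the paper, one only gets a visual splitting $W \cong W_1 \ast_P W_2$ with $P$ \emph{some} spherical parabolic, and it is not obvious from that statement alone that $P$ can be taken trivial when the original amalgamating subgroup is trivial.

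Your route~(a) is exactly what is implicitly being used. The full Mihalik--Tschantz theorem in \cite{MihalikTschantzVisual} asserts more than the paper quotes: given a splitting $W \cong A \ast_C B$ over a finite $C$, the visual graph-of-groups decomposition can be chosen so that every edge group is conjugate \emph{into} $C$. Taking $C = \{1\}$ therefore forces all visual edge groups to be trivial, and the decomposition collapses to a genuine free product of standard parabolic (hence Coxeter) subgroups. So your plan is correct and matches what the paper intends; the only issue is that the paper's summary of \cref{thm:MihalikTschantzVisual} suppresses the clause you need.

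Your back-up route~(b) via Dunwoody accessibility would also succeed, but it is heavier machinery than necessary here and still ultimately relies on the same strengthened form of \cite{MihalikTschantzVisual} when you iterate on vertex groups. Route~(a) is the cleaner argument.
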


As elucidated above, the structure of Coxeter groups is very much constrained by its parabolic subgroups. A particularly powerful tool to compare Coxeter systems is given by the {matching theorems}. These were established by Mihalik, Ratcliffe and Tschantz~\cite{MihalikRatcliffeTschantz} drawing from classical results~\cite{BourbakiLie4-6} and the work of Deodhar~\cite{Deodhar}.  

\begin{dfn}[{Basic subsets}] \label{def:basicsubsets}
    Given a Coxeter system $(W,S)$, a subset $B \subseteq S$ is said to be \emph{basic} if $\spans{B}$ is a maximal irreducible {noncyclic} spherical parabolic subgroup. (That is, $(\spans{B},B)$ is irreducible with $2 < \vert \spans{B}\vert < \infty$.)
\end{dfn}

\begin{thm}[{Basic matchings~\cite[Theorem~4.18]{MihalikRatcliffeTschantz}}] \label{thm:basicmatchingthm}
Let $(W_1,S_1)$ and $(W_2,S_2)$ be Coxeter systems with $W_1 \cong W_2$. Then, for each $B_1 \subseteq S_1$ basic, there exist a basic subset $B_2 \subseteq S_2$ and an embedding $f:\spans{B_i} \into \spans{B_j}$ (where $i,j \in \{1,2\}$, $i \neq j$, and $\vert \spans{B_i}\vert \leq \vert \spans{B_j}\vert$) with the following properties. 
\begin{enumerate}
    \item In case $\spans{B_1}$ and $\spans{B_2}$ have the same order, then $(\spans{B_1},B_1)$ and $(\spans{B_2},B_2)$ have the same type, and the map $f$ is an isomorphism. 
    (Here we say that $B_1$ and $B_2$ \emph{match isomorphically}.)
    \item If $\spans{B_1}$ and $\spans{B_2}$ do not have the same order, $\vert \spans{B_1}\vert < \vert \spans{B_2}\vert$ say, then $f: \spans{B_1} \into \spans{B_2}$ and there is some $k \geq 1$ such that either $\spans{B_1}$ is of type $\tD_{2k+1}$ and $\spans{B_2}$ is of type $\tB_{2k+1}$, or $\spans{B_1}$ is of type $\tI_2(2k+1)$ and $\spans{B_2}$ is of type $\tI_2(4k+2)$.
\end{enumerate}
Moreover, the basic subset $B_2 \subseteq S_2$ with the above properties is unique, and in both cases~(i) and~(ii) the (images of the) commutator subgroups $[\spans{B_1},\spans{B_1}]$ and $[\spans{B_2},\spans{B_2}]$ are conjugate in $W_1 \cong W_2$, whichever applicable. 
\end{thm}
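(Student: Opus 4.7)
The plan is to build $B_2$ from the image $\phi(\spans{B_1})$ under an isomorphism $\phi : W_1 \to W_2$, and then use the very short list of Coxeter-theoretic coincidences available to classify how $\phi(\spans{B_1})$ can differ from a visible parabolic in $W_2$. The main ingredients will be a parabolic closure theorem of Deodhar type for arbitrary subgroups of $W_2$, together with the list of directly decomposable irreducible spherical Coxeter groups from \cref{thm:NuidaDirect}.

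First I would appeal to Deodhar's parabolic closure theorem to produce the smallest parabolic subgroup $P \leq W_2$ containing $\phi(\spans{B_1})$, unique up to $W_2$-conjugation. After composing $\phi$ with a suitable inner automorphism, write $P = \spans{C}$ for some $C \subseteq S_2$. Because $\phi(\spans{B_1})$ is finite, the closure $\spans{C}$ is finite as well (this also follows from \cref{thm:MihalikTschantzVisual} applied to the visible splittings of $W_2$), so $(\spans{C},C)$ is spherical. The candidate $B_2 \subseteq C$ would then be chosen as the subset supporting the irreducible component of $(\spans{C},C)$ that carries the image of the commutator $[\spans{B_1},\spans{B_1}]$.

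Next I would run the case analysis. If $\spans{B_1}$ is directly indecomposable, then its image cannot spread across several irreducible components of $\spans{C}$, so it sits inside a single one; minimality of the parabolic closure then forces $\phi(\spans{B_1}) = \spans{B_2}$, yielding case~(i) with an isomorphism of the same type. If instead $\spans{B_1}$ is directly decomposable, \cref{thm:NuidaDirect} restricts $(\spans{B_1},B_1)$ to be of type $\tB_{2k+1}$, $\tI_2(4k+2)$, $\tE_7$ or $\tH_3$. The first two families decompose as $\tD_{2k+1} \times C_2$ and $\tI_2(2k+1) \times C_2$ respectively, so the matching basic subset of the smaller, ``halved'' partner type embeds into the larger via these direct decompositions — exactly the embeddings of case~(ii). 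For $\tE_7$ and $\tH_3$ the non-$C_2$ direct factor is $\mrm{Sp}_6(\F_2)'$ and $\mrm{Alt}(5)$, respectively, neither of which is the underlying group of any Coxeter system, so no type-swap with a different basic subset is possible and these types contribute only to case~(i).

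Finally, maximality of $B_2$ as a basic subset follows from maximality of $B_1$: any strictly larger noncyclic irreducible spherical parabolic of $(W_2,S_2)$ containing $\spans{B_2}$ would pull back via $\phi^{-1}$ to enlarge $\spans{B_1}$, contradicting basicness of $B_1$. Uniqueness of $B_2$, together with the $W_2$-conjugacy of commutator subgroups, follows from Deodhar-type uniqueness applied to $\phi([\spans{B_1},\spans{B_1}])$, a characteristic subgroup of $\phi(\spans{B_1})$ whose parabolic closure in $W_2$ is canonical and picks out $B_2$ regardless of the auxiliary choices made earlier. The step I expect to be hardest is the case analysis in the previous paragraph: verifying that the only admissible type-swaps are the $\tD_{2k+1} / \tB_{2k+1}$ and $\tI_2(2k+1) / \tI_2(4k+2)$ pairs, and excluding all other potential coincidences between finite irreducible Coxeter groups of different types, rests on a delicate comparison of commutator structures across the classification and is the classification-dependent heart of the argument.
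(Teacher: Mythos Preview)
The paper does not supply its own proof of this theorem; it is quoted verbatim from \cite[Theorem~4.18]{MihalikRatcliffeTschantz} as an external input. So there is nothing to compare against on the paper's side. That said, your sketch contains a genuine error in the case analysis, not merely a gap in detail.

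You assert that when $\spans{B_1}$ is directly indecomposable, minimality of the parabolic closure forces $\phi(\spans{B_1}) = \spans{B_2}$, yielding case~(i). This is false: minimality of the closure $\spans{B_2}$ only says that $\phi(\spans{B_1})$ is not contained in any proper \emph{parabolic} of $\spans{B_2}$, not that it equals $\spans{B_2}$. And case~(ii) arises precisely here. The groups $W(\tD_{2k+1})$ and $W(\tI_2(2k+1))$ are directly \emph{indecomposable} (they are absent from the list in \cref{thm:NuidaDirect}), yet they sit as proper index-two subgroups of $W(\tB_{2k+1})$ and $W(\tI_2(4k+2))$ respectively, and these images are not contained in any proper parabolic of the larger group. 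So your dichotomy ``$\spans{B_1}$ indecomposable $\Rightarrow$ case~(i), $\spans{B_1}$ decomposable $\Rightarrow$ case~(ii)'' is inverted relative to the statement, where in case~(ii) it is $\spans{B_1}$ that has the \emph{smaller} (hence indecomposable) type.

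Your maximality argument for $B_2$ has the same underlying problem. Pulling a strictly larger irreducible spherical parabolic back through $\phi^{-1}$ produces only a finite subgroup of $W_1$, not a parabolic; passing to its parabolic closure in $(W_1,S_1)$ and arguing that this closure remains irreducible, noncyclic, and strictly larger than $\spans{B_1}$ runs into exactly the type-swap phenomena you are trying to control. The proof in \cite{MihalikRatcliffeTschantz} handles this by a more symmetric setup, matching maximal spherical simplices on both sides simultaneously rather than pushing one basic subset forward and chasing its closure.
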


\cref{thm:basicmatchingthm} has a series of consequences for Coxeter matrices and ranks of Coxeter systems; see, e.g., \cite[Theorems~7.7 and~9.1]{MihalikRatcliffeTschantz} and \cite{MihalikRatcliffe-Ranks}. 
In the next section, we introduce a space encoding complete Coxeter graphs, and we shall translate some of those results into statements about that space.


\section{The galaxy of Coxeter groups}
\label{sec:galaxy}

\textit{A galaxy is a massive, gravitationally bound system that consists of stars, stellar objects, [...], black holes, and an unknown component of dark matter.\footnote{ From \url{https://space-facts.com/galaxies/}, accessed July 22nd 2022.} }

\medskip

In order to study the isomorphism problem for Coxeter groups from a slightly new perspective we now introduce a simplicial complex encoding complete Coxeter graphs and their isomorphism types. 
Some regions of this space are well understood and the isomorphism problem is solved in those parts. However, some regions and pieces of its structure remain mysterious.


\subsection{Definition of the Coxeter galaxy}
\label{subsec:galaxyDefinition}

\begin{dfn}[The Coxeter galaxy] \label{def:Galaxy}
The \emph{Coxeter galaxy} $\mc{G}$ is the flag simplicial complex defined as follows. Vertices in the Coxeter galaxy are complete Coxeter graphs up to (edge-labeled) graph-isomorphism. 
There is an edge between any pair of vertices whenever there exists an 
isomorphism between their underlying Coxeter groups. 
The higher dimensional simplices of the Coxeter galaxy $\mc{G}$ are spanned by the complete subgraphs of the graph above.
\end{dfn}

It is clear from the definition that the connected components in the Coxeter galaxy are (potentially infinite dimensional) simplices. Compare \cref{thm:FinitenessConnComps} for more on their size and shape. \cref{exm:simplex-in-galaxy} provides a concrete example of a simplex in the galaxy.

\begin{figure}[htb]
	\centering
	\begin{overpic}[width=0.4\textwidth]{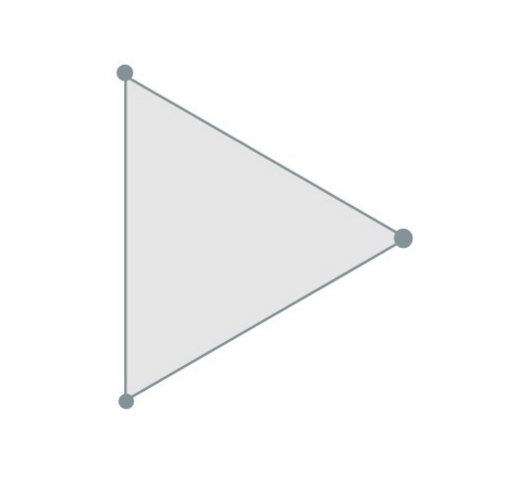}
		\put(10,15){$\CCGo_0$}
		\put(85,45){$\CCGo_1$}
		\put(10,80){$\CCGo_3$}
	\end{overpic}    
	\caption{A $2$-simplex in the Coxeter galaxy. For further details see \cref{exm:simplex-in-galaxy}. The graphs labeling the vertices can be found in \cref{fig:isomorphicGroups}.}
	\label{fig:example_simplexinthegalaxy}
\end{figure}

\begin{remark}[{Dark matter\footnote{From \url{https://en.wikipedia.org/wiki/Dark_matter}, accessed November 24th 2022: ``\textit{Dark matter is called ``dark'' because it does not appear to interact with the electromagnetic field, [...] and is, therefore, difficult to detect.}''} in the galaxy, and isolated points}] \label{rem:invisibleinfo} 
Some information about Coxeter groups cannot be seen in the Coxeter galaxy. 
Recall that two edge-labeled graphs are isomorphic when there exists a bijective map from one vertex set to the other that preserves adjacency and labels.
This implies:
\begin{enumerate}
    \item Two distinct vertices in the Coxeter galaxy never correspond to Coxeter groups of the same Coxeter type. (In particular, an edge always corresponds to an (abstract) non-type preserving isomorphism between the underlying Coxeter groups.)
    \item Since any group automorphism $\alpha \in \Aut(W)$ preserves orders, it canonically induces a graph isomorphism between $\CCG{W,S}$ and $\CCG{W,\alpha(S)}$, so that the systems $(W,S)$ and $(W,\alpha(S))$ are represented by the same vertex in the Coxeter galaxy $\mc{G}$. (In particular, if a system $(W,S)$ is given representing the vertex $\CCGo$ in $\mc{G}$, we may replace $(W,S)$ by any of its conjugates $(W,S^w)$ for $w \in W$.)
\end{enumerate}
Thus, a Coxeter group is \emph{rigid} (in the sense of \cite{BMMN}) if and only if it corresponds to an isolated point in the Coxeter galaxy. While a full characterization of rigid Coxeter systems in terms of their diagrams is not known, the weaker notion of strong rigidity --- which takes into account only inner automorphisms --- is now completely understood by work of Howlett--M\"uhlherr--Nuida \cite{HowlettMuehlherrNuida}; see \cref{sec:rigiditynotions} for more on rigidity concepts. 
\end{remark}

We will now organize the Coxeter galaxy into layers according to the number of vertices in the complete Coxeter graph.

\begin{dfn}[Layer] \label{def:Layer}
We say that a vertex $\CCG{W,S}$ in the Coxeter galaxy $\mc{G}$ lies \emph{in layer $k \in \Z_{\geq 0}$} if $S$ has $k$ elements. (By vacuity, the trivial group is seen as a Coxeter group on an empty generating set.) 
We let $\mc{G}_{\leq k}$ denote the subcomplex of $\mc{G}$ 
spanned by all vertices in layer at most $k$. ($\mc{G}_{\leq 0}$ consists of a single point.) The \emph{layer $k$ of the Coxeter galaxy}, denoted by $\mc{G}_k$, is its subcomplex spanned by the vertices of $\mc{G}_{\leq k} \setminus \mc{G}_{\leq k-1}$.
\end{dfn}

The concept of layers gives us an easy, visual way of sorting the Coxeter galaxy. We think of each layer $\mc{G}_k$ as sitting at a fixed `height' $k$, so that its elements are depicted horizontally.  This also motivates the following definition.

\begin{dfn}[Vertical and horizontal edges]
An edge $\{\CCGo_1, \CCGo_2\}$ in the Coxeter galaxy is called \emph{horizontal} if the graphs $\CCGo_1$ and $\CCGo_2$ are both contained in the same layer, and \emph{vertical of height $k$} if $\CCGo_1$ and $\CCGo_2$ are in different layers $\mc{G}_{l_1}$ and $\mc{G}_{l_2}$, respectively, with $\vert l_1 - l_2\vert = k>0$. We call an edge \emph{vertical} if it is vertical of height $k$ for some $k$. 
\end{dfn}

\begin{exm}\label{exm:simplex-in-galaxy}
\cref{fig:example_simplexinthegalaxy} shows a 2-simplex in $\mc{G}$. The vertices $\CCGo_0, \CCGo_1$ and $\CCGo_3$ are the graphs illustrated in \cref{fig:isomorphicGroups}. They all define the same underlying group and  lie in layers $\mc{G}_3$, $\mc{G}_4$ and $\mc{G}_5$, respectively. All three edges of this simplex are vertical. The edge connecting $\CCGo_0$ with $\CCGo_3$ connects a vertex in layer $3$ with one in layer $5$.     
\end{exm}

\begin{figure}[htb]
    \centering
    \begin{overpic}[width=\textwidth]{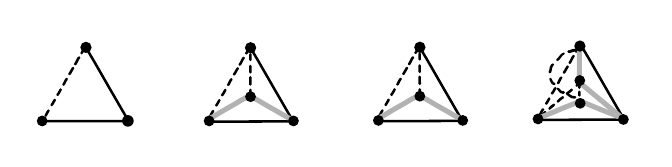}
        \put(12,0){$\CCGo_0$}
        \put(12,4){\small{$6$}} 
        \put(17,12){\small{$10$}}
        \put(36,0){$\CCGo_1$}
        \put(36,4){\small{$3$}} 
        \put(41,12){\small{$10$}}
        \put(61,0){$\CCGo_2$}
        \put(61,4){\small{$5$}} 
        \put(66,13){\small{$6$}}
        \put(85,0){$\CCGo_3$}
        \put(85,4){\small{$3$}} 
        \put(90,13){\small{$5$}}
    \end{overpic}
\caption{Four complete Coxeter graphs describing a same group. The three graphs $\CCGo_i$ with $i=1,2,3$ may be obtained from $\CCGo_0$ (leftmost) by (repeated) applications of blow-ups.}
\label{fig:isomorphicGroups}
\end{figure}

We now introduce a natural subcomplex of the galaxy 
which, as will be shown in \cref{prop:verticalCore}, encodes the connected components of $\mc{G}$. 

\begin{dfn}[Vertical core] \label{def:Core}
	The \emph{vertical core} of the Coxeter galaxy is the subcomplex $\VC$ of $\mc{G}$ obtained by deleting, for every $k > 1$, all vertical edges of height $k$, and all open simplices containing such edges. Considering layers, we define $\VC_{\leq n} := \mc{G}_{\leq n} \cap \VC$ as well as $\VC_n := \mc{G}_n \cap \VC$.
\end{dfn}

Visually, the vertical core is obtained by deleting edges that `jump' between nonsuccessive layers.

\begin{exm}
Let $\CCGo_0$ be a complete Coxeter graph on three vertices $v_1, v_2, v_3$. Suppose the edge $\{v_1,v_2\}$ is labeled by $2(2k_1+1)$ and the edge $\{v_2,v_3\}$ by $2(2k_2+1)$. The remaining third edge has label $\infty$. This graph is an instance of a starlet defined later in \cref{def:starlet}. In case $4k_1+2=6$ and $4k_2+2=10$ the graph is shown on the left in \cref{fig:isomorphicGroups}. It yields the same group as the graphs $\CCGo_1$, $\CCGo_2$ and $\CCGo_3$ depicted in the same figure. 

The connected component of the graph $\CCGo_0$ in the Coxeter galaxy is a three-simplex, as illustrated in \cref{fig:example_verticalcore}, on the four vertices shown in \cref{fig:isomorphicGroups}. The vertical core contains two triangles. One with vertices $\CCGo_0, \CCGo_1, \CCGo_2$ and the other has vertex $\CCGo_3$ instead of $\CCGo_0$. A spine, as defined in \cref{def:spine}, would be given by any tree on the four vertices. For example the following three edges:  $\{\CCGo_0, \CCGo_1\}, \{\CCGo_1,\CCGo_3\}$ and $\{\CCGo_1,\CCGo_2\}$. The first two edges are vertical edges from layer three to four and four to five, respectively. The last edge is a horizontal one within layer four. 
See also \cref{ex:twist-not-enough} for further details on this example. 
\end{exm}

\begin{figure}[htb]
    \begin{overpic}[width=0.7\textwidth]{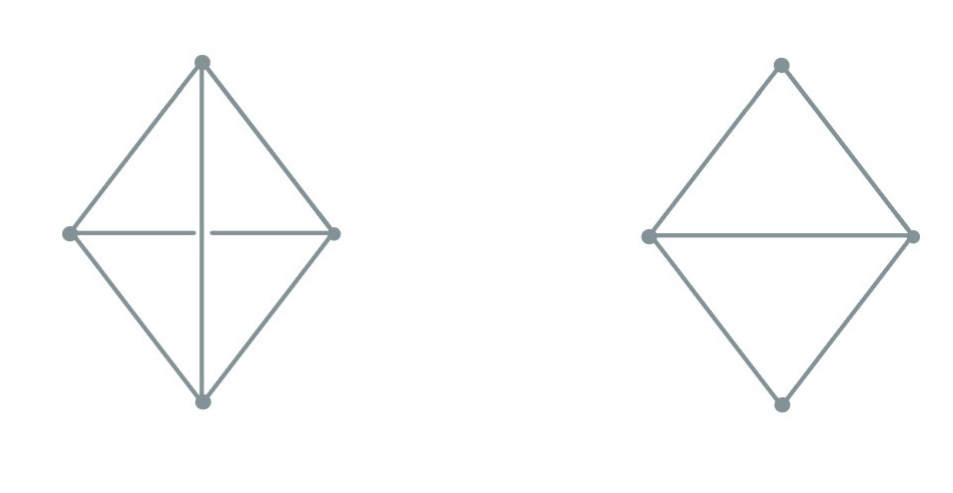}
        \put(19,2){$\CCGo_0$}
        \put(0,26){$\CCGo_1$}
        \put(39,26){$\CCGo_2$}
        \put(19,49){$\CCGo_3$}
        \put(80,2){$\CCGo_0$}
        \put(59,26){$\CCGo_1$}
        \put(99,26){$\CCGo_2$}
        \put(80,49){$\CCGo_3$}
    \end{overpic}
\caption{A tetrahedron ($3$-cell) in the galaxy (left) and its intersection with the vertical core (right), two $2$-cells attached. The graphs labeling the vertices are shown in \cref{fig:isomorphicGroups}.}.
\label{fig:example_verticalcore}
\end{figure}

\begin{remark} \label{obs:GalaxyandVChavesamevertices}
The vertex sets of the Coxeter galaxy $\mc{G}$ and of its vertical core $\VC$ agree as we only delete edges and higher dimensional (open) cells to construct it. In particular, $\mc{G}_n \cap \VC = \mc{G}_n$ for every $n \in \Z_{\geq 0}$.
\end{remark}

There is another notion that can be 
helpful to study algorithmic aspects of the isomorphism problem. 

\begin{dfn}[Spine]
\label{def:spine}
Define a \emph{spine} of the galaxy of Coxeter groups to be a spanning forest of the one-skeleton of the vertical core, {consisting of a single spanning tree for each component}. 
A \emph{spine} of a component is just a spanning tree for it. 
\end{dfn}

A spine, by definition, visits every vertex of a connected component once. One can label or color the edges of a spine by the underlying group isomorphisms or moves; 
cf. \cref{subsec:moves,subsec:horizontalmoves}.


\subsection{The isomorphism problem and related questions} 
\label{subsec:Isoproblem}

The isomorphism problem for Coxeter groups is a classic problem which appeared, together with closely related questions, in several places in the literature. See, for example, Problems~1 and~2 and further discussions in \cite{BernhardSurvey}. In our setting, the isomorphism problem for (finitely generated) Coxeter groups can be formulated as follows.

\begin{qst}[The (classic) isomorphism problem]
\label{qst:isoProblem}
Given two complete Coxeter graphs, is there an algorithm that decides whether the corresponding vertices of the Coxeter galaxy are in a same connected component? 
\end{qst}

A slightly stronger question is the following.  

\begin{qst}[The isomorphism class problem]
\label{qst:refinedGalactic}
Does there exist an algorithm that takes as an input a vertex in the Coxeter galaxy and lists all vertices in its connected component?  
\end{qst}

We also propose the following question which is aimed towards finding an efficient algorithm computing all vertices in a connected component. 
\begin{qst}[The effective isomorphism class problem]
\label{qst:refinedSpine}
Does there exist an algorithm that takes as input a vertex of the Coxeter galaxy and computes a spine (of minimal diameter) of its connected component? 
\end{qst}


\section{A first expedition into the galaxy}
\label{sec:FirstExpedition}

In this section we collect general properties of the Coxeter galaxy and the size and shape of its connected components.

\subsection{Vertically navigating the Coxeter galaxy}
\label{subsec:moves}

Classic examples (cf. \cite{NuidaIndecomp}) of abstractly isomorphic Coxeter groups with generating sets of different ranks immediately imply that some edges in the Coxeter galaxy connect vertices in different layers. This phenomenon gives means to vertical moves between different layers which can be fully characterized. 

\subsubsection{Blow-ups along pseudo-transpositions}

A first type of isomorphism between Coxeter groups changing the number of generators is a blow-up along a pseudo-transposition. This increases the rank of a Coxeter system by one. 

\begin{dfn} \label{def:pseudotransp}
Given a Coxeter system $(W,S,M_S)$, a Coxeter generator $t \in S$ is called a \emph{pseudo-transposition} 
when the following conditions are satisfied. 
\begin{enumerate}
\item The generator $t$ is contained in a subset $J \subseteq S$ for which any $s \in S\setminus J$ either has  $\mst=\infty$ or satisfies $m_{s,u}=2$ for all $u \in J$. \item The parabolic subgroup $W_J$ is of classical type $\tB_{k}$ for some odd $k$ or of type $\tI_2(2k)$ for some odd $k > 2$.
\item In case $W_J$ is of type $\tB_k$, the element $t$ must also satisfy $m_{t,u}=2$ for all $u \in J$ except for a single $v \in J$ for which $m_{t,v}=4$. (Pictorially, $t$ is represented by the external vertex of the unique $4$-labeled edge of the Coxeter--Dynkin diagram $\tB_k$.)
\end{enumerate}
In case the Coxeter system $(W,S,M_S)$ does not admit any pseudo-transpositions, then we call it \emph{saturated}.\footnote{We should mention that saturated Coxeter systems were introduced as \emph{reduced} systems in~\cite{HowlettMuehlherr}, see also Section 3 of \cite{BernhardSurvey}. They were later referred to as \emph{expanded} systems in~\cite{RatcliffeTschantz}. Bernhard M\"uhlherr suggested to use the term \emph{saturated}. We are convinced that this name fits best with the heuristic meaning that a saturated system is loaded with as many reflections in the generating set as possible. 
This terminology also avoids potential confusion with `reducibility' concepts, such as those from \cref{sec:Prelim}.}
\end{dfn}

Notice that if the parabolic $W_J$ is of type $\tI_2(2k)$ in condition~(ii) above, then $J = \Menge{t,v}$ and $m_{t,v}\neq 2$. In particular, regardless of whether $W_J$ is of type $\tB_k$ or $\tI_2(2k)$, there is always a unique element $v \in J$ which does not commute with $t$.

That pseudo-transpositions are a source of moves to transform Coxeter systems without transforming the underlying Coxeter group is summarized in the following result.

\begin{lem}[{Blow-ups \cite[Theorems~8.4 and~8.8]{MihalikRatcliffeTschantz}}] \label{lem:blowingup}
Suppose a Coxeter system $(W,S,M_S)$ contains a pseudo-transposition $t \in J \subseteq S$ as in \cref{def:pseudotransp}, with $v \in S$ being the unique element of $J$ for which $m_{t,v} > 2$. Then the set 
\[S' := \Menge{tvt,w_J} \cup (S\setminus\Menge{t}),\]
where $w_J$ is the longest element of the parabolic $W_J$, is a Coxeter generating set for $W$ with $\vert S'\vert = \vert S\vert +1$. Moreover, the Coxeter matrix $M_{S'}$ has entries $(m'_{x,y})$ as follows. 
\begin{equation*}
m'_{x,y}=\begin{cases}
          \frac{m_{t,v}}{2} \quad &\text{if } \, x=tvt \text{ and } y=v, \\
          m_{x,y}    \quad &\text{if } \, x,y\in S\cap S', \\
          m_{v,y}  \quad &\text{if } \, x=tvt \text{ and } y \in J \setminus\Menge{t,v},\\
          2 \quad &\text{if } \, x = w_J \text{ and } y \in (\Menge{tvt} \cup J) \setminus \Menge{t},\\
\infty \quad & \text{if } \, x \in \Menge{tvt,w_J} \text{ and } y \in S \setminus J  \text{ and } m_{t,y}=\infty, \\
2 \quad & \text{if } \, x \in \Menge{tvt,w_J} \text{ and } y \in S \setminus J  \text{ and } m_{t,y}\neq\infty.
     \end{cases}
\end{equation*}
\end{lem}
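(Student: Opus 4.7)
The plan is to first establish the structural backbone of the parabolic $W_J$, then verify that $S'$ generates $W$ and satisfies the claimed matrix entries, and finally argue that this data defines a bona fide Coxeter system for $W$.

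The key structural observation, which drives the entire argument, is that under the hypotheses of \cref{def:pseudotransp} the longest element $w_J \in W_J$ is central and $W_J$ admits a direct product decomposition $W_J = W_0 \times \langle w_J \rangle$, where $W_0 = \langle (J \setminus \{t\}) \cup \{tvt\}\rangle$ is a Coxeter subgroup of type $\tD_k$ (in the $\tB_k$ case) or $\tI_2(k)$ (in the $\tI_2(2k)$ case). For $\tB_k$ with $k$ odd this is the classical fact that $w_J$ acts as $-I$ on the natural reflection representation, with $W_0$ coinciding with $W_{\tD_k}$ sitting inside $W_{\tB_k}$ as the subgroup of signed permutations of positive sign. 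For $\tI_2(2k)$ with $k$ odd, $(tv)^k$ generates the center of the dihedral group of order $4k$, and $\langle tvt, v\rangle$ has order $2k$ because $(tvt)v = (tv)^2$ has order $k$. Both facts are standard and can be checked directly from the defining presentations.

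Using this decomposition one obtains $t = w_J \cdot u$ for an explicit word $u \in W_0 \subseteq \langle S' \rangle$, which gives $t \in \langle S' \rangle$ and hence $W = \langle S \rangle = \langle S' \rangle$. The rank grows by exactly one, since $tvt$ and $w_J$ are distinct elements of $W_J$ that lie outside $S \setminus \{t\}$ (the element $w_J$ has $J$-length strictly greater than one). The matrix entries are then verified case-by-case: the identity $(tvt)v = (tv)^2$ in $W_J$ gives the claimed order $m_{t,v}/2$; centrality of $w_J$ yields commutativity with $tvt$ and with every $u \in J \setminus \{t\}$; for $u \in J \setminus \{t,v\}$ the inclusion $W_0 \cong W_{\tD_k} \subset W_{\tB_k}$ has $tvt$ inheriting the adjacency pattern of $v$, giving $m'_{tvt,u} = m_{v,u}$; and for $y \in S \setminus J$ condition~(i) of \cref{def:pseudotransp}, combined with the containment $tvt, w_J \in W_J$, determines that $y$ either commutes with both elements or generates together with each an infinite dihedral subgroup.

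The main obstacle is the concluding step: showing that $(W, S', M_{S'})$ is a Coxeter system, rather than merely a generating set satisfying the listed relations. Let $W'$ denote the abstract group defined by the Coxeter presentation with matrix $M_{S'}$; the previous matrix calculations yield a surjection $\pi \colon W' \twoheadrightarrow W$. For injectivity, my plan is to construct a section $\sigma \colon S \to W'$ that fixes $S \setminus \{t\}$ pointwise and sends $t \mapsto w_J \cdot u$, then verify that the original Coxeter relations are preserved by $\sigma$; this amounts to checking that the word $w_J u$ representing $t$ satisfies the original braid and commutation relations inside $W'$. The delicate point is verifying the relation $(ty)^{m_{t,y}} = 1$ for every $y \in S \setminus \{t\}$, which splits into separate cases according to whether $y \in J$, $y \in S \setminus J$ with $m_{t,y} = \infty$, or $y \in S \setminus J$ with $m_{t,y} = 2$. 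In each case the identity reduces to a computation inside $W_J$ or in an amalgamated product of $W_J$ with $\langle y \rangle$ along the appropriate commuting subgroup.
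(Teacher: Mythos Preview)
Your proposal is correct and follows essentially the same approach as the paper: verify that $S'$ generates $W$, compute the matrix entries case-by-case using centrality of $w_J$ and the identity $(tvt)v=(tv)^2$, then argue that the abstract Coxeter group on $(S',M_{S'})$ is isomorphic to $W$. The paper packages the same ingredients slightly differently---it is terser on the final injectivity step (calling it ``straightforward'') whereas you spell out the section construction, while conversely the paper makes the amalgamated-product decomposition $W \cong W_{S\setminus\{t\}} \ast_{W_{S\setminus\{t,y\}}} W_{S\setminus\{y\}}$ explicit when establishing the $\infty$-labels for $y\in S\setminus J$ with $m_{t,y}=\infty$, a point you only allude to.
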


In particular, the unique element $v \in J$ not commuting with the pseudo-transposition $t \in J$ is still contained in the new Coxeter generating set $S'$. (In fact, $S \setminus J \subseteq S'$.) The pseudo-transposition $t$ gets replaced by the reflection $tvt$, and the longest element $w_J \in \spans{J}$ gets `promoted' to a reflection in the new system $S'$.

While \cref{lem:blowingup} is not new, we briefly sketch a proof for the sake of completeness.

\begin{proof}
The pseudo-transposition $t$ that has been dropped out of the generating set is contained in the subgroup generated by $tvt^{-1}$, $w_J$, and $J \setminus \{t\}$. Hence $S'$ is still a generating set for $W$. 

The fact that the relations among the elements of $S'$ are Coxeter relations as in \cref{def:CoxeterSystem} follows from standard results (see, e.g., \cite{DavisBook}). Indeed, most of them can be read off of the original Coxeter system $(W,S,M_S)$, namely for elements in $S\cap S'$. The relations $(tvt w_J)^2=1=(w_J y)^2$ for $y \in S'\cap J$ are clear since $w_J$ is central in $W_J$, and it is immediate that $tvt v=(tv)^2$ has order $m_{t,v}/2$. By \cref{def:pseudotransp}, $v$ is the unique Coxeter generator in $J$ not commuting with $t$, so that $(tvt y)^n = t(vy)^n t$ for any $y \in J \setminus\Menge{v}$, hence $tvt y$ has order $m_{v,y}$. Now, if some $y \in (S \cap S') \setminus J$ satisfies $m_{t,y}=\infty$, then $W$ decomposes as the amalgamated free product of its (maximal) parabolics $W_{S\setminus\Menge{t}}$ and $W_{S\setminus\Menge{y}}$, amalgamated along the common parabolic $W_{S\setminus\Menge{t,y}}$. Hence, since $t \notin S'$ and both $tvt$ and $w_J$ include $t$ in their reduced expressions, the orders of $tvt y$ and of $w_J y$ are infinite. In case $m_{t,y} < \infty$, then the definition of pseudo-transposition implies that $y$ commutes with all elements of $J$, whence $m_{tvt,y}=2=m_{w_J,y}$. 

Thus the abstract Coxeter group defined by (a copy of) the generating set $S'$ and the Coxeter matrix $M_{S'}$, obtained from the computations in the previous paragraph, maps onto the subgroup $\spans{S'} \leq W$. It is straightforward to check that such Coxeter group injects into $W$, so that it is indeed isomorphic to $\spans{S'}$. 
\end{proof}

In view of \cref{def:pseudotransp} and \cref{lem:blowingup}, we interchangeably say that $\CCG{W,S}$ can be blown up if $(W,S)$ admits a pseudo-transposition. 

\begin{exm} 
Let us illustrate how blow-ups modify the underlying diagrams. For the sake of tradition, we work this example out using the Coxeter--Dynkin diagram (cf. \cref{def:CoxDiag}). Consider the spherical Coxeter group whose classical type is the disjoint union of $\tB_5$ and $\tI_2(6)$, so that this group is the direct product of the hyperoctahedral group of order $2^5 5!$ and the dihedral group of order $12$. It represents a vertex in $\mc{G}$ shown as the bottom corner of \cref{fig:simplex-new}.

One quickly checks (cf. \cref{def:pseudotransp}) that this system admits two blow-ups, one along the subgroup of type $\tB_5$, and another along the subgroup of type $\tI_2(6)$. The top right diagram of \cref{fig:simplex-new} shows the blown up diagram along the old edge of type $\tI_2(6)$, in which case the added vertex is the longest element from $\tI_2(6)$, now `promoted' to a reflection, and the dihedral order gets halved. The top left graph in \cref{fig:simplex-new} represents the diagram blown up along the parabolic subgroup of type $\tB_5$, which gets replaced by the diagram of type $\tD_5$, and the added vertex is the longest element from $\tB_5$. By \cref{def:Galaxy}, the three diagrams span a $2$-simplex in the Coxeter galaxy.
\end{exm}

\begin{figure}[htb]
    \centering
    \begin{overpic}[width=0.5\textwidth]{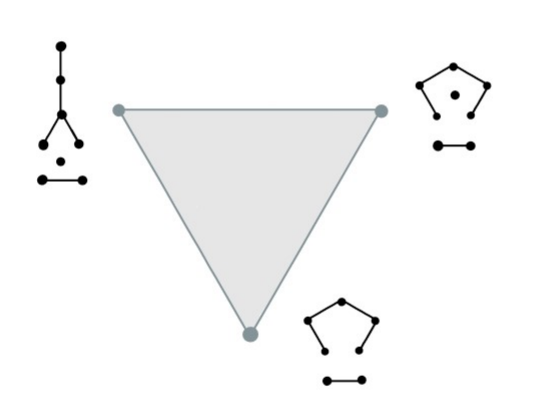}
        \put(63,0){\small{$6$}}
        \put(55,11){\small{$4$}}
        \put(11,38){\small{$6$}}
        \put(84,44){\small{$3$}}
        \put(76,55){\small{$4$}}
    \end{overpic}
    \caption{An example of how blow-ups modify Coxeter--Dynkin diagrams, starting with the disjoint union of $\tI_2(6)$ and $\tB_5$ (bottom vertex).}
    \label{fig:simplex-new}
\end{figure}

Drawing from the work of Mihalik--Ratcliffe--Tschantz~\cite{MihalikRatcliffeTschantz} one can see that blow-ups are `atomic vertical edges', in the sense that they suffice to describe vertical navigation within the Galaxy.

\begin{thm}[{Vertical edges decompose into blow-ups~\cite{MihalikRatcliffeTschantz}}] 
\label{thm:navigatingverticallayers}
Let $\CCGo_1$, $\CCGo_2$ be adjacent vertices in $\mc{G}$ lying in layers $\mc{G}_k$ and $\mc{G}_{k+m}$, respectively, with $m\geq 1$. Then $\CCGo_1$ admits a blow-up and there is a vertex $\CCGo_3$ in $\mc{G}_{k+1}$ obtained from $\CCGo_1$ via the corresponding pseudo-transposition, as described in \cref{lem:blowingup}, so that $\CCGo_1$, $\CCGo_2$, $\CCGo_3$ span a $2$-simplex in $\mc{G}$.
\end{thm}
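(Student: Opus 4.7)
The plan is to produce a pseudo-transposition inside $(W,S_1)$, perform the corresponding blow-up to enter layer $k+1$, and then close the triangle by the flag property of $\mc{G}$. The crux is the first of these three steps, for which I would rely on the matching theorem (\cref{thm:basicmatchingthm}) together with the rank estimates of Mihalik and Ratcliffe from~\cite{MihalikRatcliffeTschantz,MihalikRatcliffe-Ranks}.

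First, I would argue that since $\vert S_1\vert = k < k+m = \vert S_2\vert$ and both sets generate the same Coxeter group $W$, the system $(W,S_1)$ cannot be expanded. To see this, apply \cref{thm:basicmatchingthm} to match each basic subset of $S_1$ with a basic subset of $S_2$. Matchings of case~(i) are type-preserving and carry equal ranks on both sides, so the cardinality gap of at least one generator must be accounted for by either matchings of case~(ii) (pairing $\tD_{2k+1}$ with $\tB_{2k+1}$ or $\tI_2(2k+1)$ with $\tI_2(4k+2)$) or by extra generators outside the basic subsets. In both cases, the rank comparisons from \cite[Theorems~7.7 and~9.1]{MihalikRatcliffeTschantz} and~\cite{MihalikRatcliffe-Ranks} force the existence of an element $t \in J \subseteq S_1$ satisfying the conditions of \cref{def:pseudotransp}.

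Next, I would feed the pseudo-transposition $t \in J$ into \cref{lem:blowingup}. This yields a Coxeter generating set $S_1' := \Menge{tvt, w_J} \cup (S_1 \setminus \Menge{t})$ for the same group $W$, of cardinality $k+1$. Its complete Coxeter graph $\CCGo_3 := \CCG{W,S_1'}$ is therefore a vertex of $\mc{G}_{k+1}$, and by construction $\CCGo_1$ and $\CCGo_3$ are adjacent in $\mc{G}$. Since $\CCGo_2$ and $\CCGo_3$ also both represent $W$, they are likewise adjacent. Combined with the hypothesis that $\CCGo_1$ and $\CCGo_2$ are adjacent, the three vertices form a triangle in the $1$-skeleton of $\mc{G}$, and the flag property from \cref{def:Galaxy} then fills in the desired $2$-simplex.

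The main obstacle is the first step. Although it is folklore that blow-ups suffice to vertically navigate the galaxy, pinning down a pseudo-transposition that lives on the \emph{lower-rank} side $(W,S_1)$ (rather than one of potentially several such objects appearing further up the blow-up chain from $(W,S_2)$) is delicate. One must trace carefully through the basic matching to determine exactly where the cardinality deficit of $S_1$ relative to $S_2$ is concentrated, and translate this information into the precise combinatorial conditions of \cref{def:pseudotransp}. Once this is secured, the remaining steps are essentially formal consequences of \cref{lem:blowingup} and the flag condition.
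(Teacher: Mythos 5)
Your proposal follows the same route as the paper's proof: produce a pseudo-transposition in $(W,S_1)$ via Mihalik--Ratcliffe--Tschantz, blow up via \cref{lem:blowingup} to land in $\mc{G}_{k+1}$, and close the $2$-simplex using the flag property of $\mc{G}$. The only difference is that the step you flag as delicate is in fact immediate: the Maximum Rank Theorem \cite[Theorem~9.1]{MihalikRatcliffeTschantz} (cf.\ \cref{obs:maxranknoblowups}) states that $S_1$ has maximal cardinality among Coxeter generating sets of $W$ \emph{if and only if} it admits no pseudo-transposition, so the mere existence of $S_2$ with $\vert S_2\vert = k+m > k = \vert S_1\vert$ already yields the pseudo-transposition in $S_1$, with no need for the detour through the basic matching theorem.
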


\begin{proof}
Write $\CCGo_1 = \CCG{W,S}$, where $\vert S\vert = k$. By the Maximum Rank Theorem~\cite[Theorem~9.1]{MihalikRatcliffeTschantz}, $(W,S)$ must admit some blow-up, for otherwise $\vert S\vert$ would be maximal among cardinalities of Coxeter generating sets for $W$. That is, $S$ contains a subset $J \subseteq S$ spanning a maximal spherical parabolic of classical type $\tI_2(4n+2)$ or $\tB_{2n+1}$ for some $n \geq 1$ and satisfying the conditions from \cref{def:pseudotransp}. By \cref{lem:blowingup}, we can construct a Coxeter generating set $S' \subseteq W$ for $W$ with $\vert S'\vert = k+1$. Thus $\CCGo_3:=\CCG{W,S'} \in \mc{G}_{k+1}$ is adjacent to both $\CCGo_1$ and $\CCGo_2$, whence the theorem.
\end{proof}

Closely related to the above is the fact that a nonsaturated Coxeter system always admits pseudo-transpositions such that the result after finitely many blow-ups is saturated. This was shown as \cite[Proposition~7]{HowlettMuehlherr}, see also \cite[p.~2271]{CapracePrzytyckiTwistRigid} or \cite[Section~9]{MihalikRatcliffeTschantz}.

\begin{remark} \label{obs:maxranknoblowups}
It is useful to keep in mind the following fact, contained in~\cite[Theorem~9.1]{MihalikRatcliffeTschantz} and used in the proof of \cref{thm:navigatingverticallayers}: Given a Coxeter system $(W,S)$, the set $S$ has maximal cardinality among Coxeter generating sets of $W$ if and only if $S$ does not admit a blow-up (i.e., $S$ contains no pseudo-transpositions).
\end{remark}

\cref{thm:navigatingverticallayers} has the following straightforward, but structurally important consequences.

\begin{cor}[{Navigating up}] \label{cor:navigatingverticallayers}
Every vertical {path} in the Coxeter galaxy $\mc{G}$ from layer $k$ to layer $k+l$ can be {homotoped to} a sequence of $l$ vertical edges between adjacent layers, potentially followed by a single horizontal edge within layer $k+l$. All vertical edges in this sequence correspond to blow-ups. 
\end{cor}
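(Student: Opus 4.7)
The plan is to obtain the decomposition by iterating \cref{thm:navigatingverticallayers}. First, I would invoke \cref{thm:main-structure}(i): every connected component of $\mc{G}$ is a simplex, so any vertical path from $\CCGo_1 \in \mc{G}_k$ to $\CCGo_2 \in \mc{G}_{k+l}$ with $l \geq 1$ is homotopic in $\mc{G}$ to the single edge $\{\CCGo_1,\CCGo_2\}$ between its endpoints, which is vertical of height $l$. Thus it suffices to treat this one edge.

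I then proceed by induction on $l \geq 1$. In the base case $l=1$, \cref{thm:navigatingverticallayers} produces a vertex $\CCGo^{(1)} \in \mc{G}_{k+1}$, obtained from $\CCGo_1$ by a blow-up, such that $\CCGo_1$, $\CCGo^{(1)}$ and $\CCGo_2$ span a $2$-simplex. Since $\CCGo^{(1)}$ and $\CCGo_2$ both lie in layer $k+1$, the edge $\{\CCGo^{(1)},\CCGo_2\}$ is horizontal (or trivial if the two vertices coincide), and homotoping across the $2$-simplex replaces $\{\CCGo_1,\CCGo_2\}$ by one blow-up followed by at most one horizontal edge. For the step $l \geq 2$, the same theorem provides $\CCGo^{(1)} \in \mc{G}_{k+1}$ together with a $2$-simplex on $\CCGo_1$, $\CCGo^{(1)}$, $\CCGo_2$; homotoping across it replaces $\{\CCGo_1,\CCGo_2\}$ by the concatenation of the blow-up $\CCGo_1 \to \CCGo^{(1)}$ with the edge $\{\CCGo^{(1)},\CCGo_2\}$, which is vertical of height $l-1$. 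Applying the inductive hypothesis to this remaining edge yields $l-1$ further blow-ups followed by at most one horizontal edge in layer $k+l$, for a total of $l$ blow-ups possibly followed by a single horizontal edge, as required.

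The main obstacle I anticipate is conceptual rather than computational: the whole argument hinges on the $2$-simplex conclusion of \cref{thm:navigatingverticallayers}, which is precisely what guarantees that after each blow-up the original target $\CCGo_2$ remains adjacent to the newly produced vertex in layer $k+1$ (so that the remaining hop is of height exactly $l-1$). Without this triangle one could only extract \emph{some} upward sequence of blow-ups from $\CCGo_1$, with no control over where it ends; it is the triangular cell that lets the induction respect the prescribed endpoints and thereby yields the advertised normal form for vertical paths.
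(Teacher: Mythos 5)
Your argument is correct and is exactly the straightforward iteration of \cref{thm:navigatingverticallayers} that the paper intends (it offers no explicit proof, presenting the corollary as a ``straightforward consequence'' of that theorem). One citation caveat: quote the fact that connected components are simplices from the remark immediately following \cref{def:Galaxy}, where it is noted to be immediate from the definition, rather than from \cref{thm:main-structure}(i) --- the latter is established via \cref{thm:FinitenessConnComps} and \cref{cor:heightisbounded}, whose proof invokes the very corollary you are proving, so citing it here would create a circularity.
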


\begin{cor}[{Connected components do not skip layers}] \label{cor:componentsdonotjump} 
Any connected component $\mc{C}$ of the Coxeter galaxy $\mc{G}$ containing a vertex in some layer $k$ and a vertex in some layer $k+m$ with $m > 1$ also contains vertices in all layers $\mc{G}_{k+i}$ with $0 < i < m$.
\end{cor}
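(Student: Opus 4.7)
The corollary will fall out as an essentially immediate consequence of \cref{thm:navigatingverticallayers} together with the fact, recorded right after \cref{def:Galaxy}, that every connected component of $\mc{G}$ is a simplex, so that any two vertices lying in the same component are directly joined by an edge. The plan is to blow up repeatedly in order to produce, one layer at a time, a vertex of $\mc{C}$ in each intermediate layer $\mc{G}_{k+i}$.

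More precisely, I proceed by induction on $m \geq 2$. Given $\CCGo_1 \in \mc{C} \cap \mc{G}_k$ and $\CCGo_2 \in \mc{C} \cap \mc{G}_{k+m}$, the fact that $\mc{C}$ is a simplex ensures that $\CCGo_1$ and $\CCGo_2$ are adjacent in $\mc{G}$. Applying \cref{thm:navigatingverticallayers} to this edge produces a vertex $\CCGo_3 \in \mc{G}_{k+1}$ which is adjacent to both $\CCGo_1$ and $\CCGo_2$; in particular $\CCGo_3 \in \mc{C}$. This settles the base case $m=2$, since then the only intermediate layer is $\mc{G}_{k+1}$.

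For the inductive step, assume the statement for $m-1$ with $m \geq 3$. Having obtained $\CCGo_3 \in \mc{C} \cap \mc{G}_{k+1}$ as above, I now view $\CCGo_3$ and $\CCGo_2$ as a pair in $\mc{C}$ whose layers differ by $m-1$. By the inductive hypothesis applied to this pair, $\mc{C}$ contains vertices in every layer $\mc{G}_{(k+1)+j}$ with $0 < j < m-1$, i.e.\ in every $\mc{G}_{k+i}$ with $1 < i < m$; combined with $\CCGo_3 \in \mc{G}_{k+1}$, this produces representatives of $\mc{C}$ in all intermediate layers $\mc{G}_{k+i}$ for $0 < i < m$, as required.

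The only point that requires attention, rather than an obstacle, is making sure \cref{thm:navigatingverticallayers} can be applied at each induction step: at step $j$ one feeds in the freshly produced vertex $\CCGo_{j+2} \in \mc{G}_{k+j}$ and the fixed endpoint $\CCGo_2 \in \mc{G}_{k+m}$, whose layers differ by $m-j \geq 1$, so the hypothesis of the theorem is met and the blow-up is available. Equivalently, one can bypass the induction and simply invoke \cref{cor:navigatingverticallayers}, which rewrites any path from $\CCGo_1$ to $\CCGo_2$ as a sequence of blow-ups between adjacent layers followed by at most one horizontal edge; the vertices appearing in this sequence automatically populate each intermediate layer.
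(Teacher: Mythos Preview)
Your argument is correct and follows exactly the approach the paper intends: the corollary is stated there without proof as a straightforward consequence of \cref{thm:navigatingverticallayers} (equivalently, of \cref{cor:navigatingverticallayers}), and your induction simply unpacks that consequence. The observation that components are simplices, so that the two given vertices are automatically adjacent and \cref{thm:navigatingverticallayers} applies, is precisely the implicit step.
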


The results discussed in this section explain how to move to a higher layer. The next subsection will consider moves to a lower layer of the galaxy.

\subsubsection{Blowing down}

A second type of isomorphism between Coxeter groups that alters the number of generators is a blow-down. This move was implicitly introduced by Mihalik; cf. \cite[Section~3]{MihalikEvenIsoThm}. Given a Coxeter system $(W,S)$ and a subset of generators $X \subseteq S$, we let $X^\perp$ denote the set of all elements $s \in S$ for which $m_{s,x}=2$ for all $x \in X$. 

\begin{thm}[{Blow-downs\cite{MihalikRatcliffe-Ranks}}] \label{thm:blowdown}
Let $(W,S)$ be a Coxeter system. There exists a Coxeter generating set $S'\subseteq W$ with $\vert S'\vert < \vert S\vert$ if and only if all of the following hold.
\begin{enumerate}
    \item There is a basic subset $B \subseteq S$ (cf. \cref{def:basicsubsets}) with $(\spans{B},B)$ 
    of classical type $\tD_{2n+1}$ or $\tI_2(2n+1)$ (with $n\geq 1$) and an element $r \in B^\perp$ such that each generator $s \in S$ for which $m_{s,r} < \infty$ lies in $B \cup B^\perp$ and satisfies $m_{s,r} = 2$.
    \item In case $(\spans{B},B)$ is of type $\tD_{2n+1}$, the two vertices $x,y$ at the split end of the Coxeter--Dynkin diagram for $(\spans{B},B)$ satisfy $\{ s \in S \mid m_{s,x}, m_{s,y} < \infty \} = B \cup B^\perp$. Similarly, if $B = \{x,y\}$ (that is, $(\spans{B},B)$ is of type $\tI_2(2n+1)$), then $\{ s \in S \mid m_{s,x}, m_{s,y} < \infty \} = B \cup B^\perp$.
    \item In both cases above, if there is an edge cycle in $\CCG{W,S}$ of length at least four containing $\{x,y\}$ and with no $\infty$-labeled edges, then such a cycle must have a finite-labeled chord, that is, there exist two nonconsecutive vertices in this cycle whose label in $\CCG{W,S}$ is finite. 
\end{enumerate}
\end{thm}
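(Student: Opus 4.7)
The plan is to interpret blow-downs as reverses of blow-ups (\cref{lem:blowingup}), with conditions (i)--(iii) encoding precisely the structural signature needed to recognize a ``blown-up'' diagram inside $\CCG{W,S}$. Throughout, the matching machinery (\cref{thm:basicmatchingthm}) controls how basic subsets behave across generating sets of different ranks, and the Maximum Rank Theorem (\cref{obs:maxranknoblowups}) supplies the global strategy for descending to smaller ranks.

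For the forward direction ($\Rightarrow$), suppose $S'$ with $\vert S'\vert < \vert S\vert$ exists. Combining \cref{obs:maxranknoblowups} with \cref{cor:navigatingverticallayers} applied ``downwards'', we may reduce to the case $\vert S'\vert = \vert S\vert - 1$ and assume that $S$ is obtained from $S'$ by a single blow-up along a pseudo-transposition $t \in J \subseteq S'$, with $J$ of type $\tB_{2n+1}$ or $\tI_2(4n+2)$. By \cref{lem:blowingup}, $S = (S' \setminus \{t\}) \cup \{tvt, w_J\}$ with an explicit Coxeter matrix. Setting $B := (J \setminus \{t\}) \cup \{tvt\}$ (a basic subset of $S$ of type $\tD_{2n+1}$ or $\tI_2(2n+1)$) and $r := w_J$, one reads conditions (i) and (ii) off the formula for $M_S$: the longest element $r = w_J$ commutes with everything in $(\{tvt\} \cup J) \setminus \{t\}$, and any $s \in S \setminus J$ relates to $r$ by commutation or by $\infty$ depending on whether $m_{t,s}$ was finite. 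Condition (iii) then reflects the fact that, in $S'$, the subgraph on $J$ attaches to the rest only through $t$; after the blow-up, any short finite-labeled cycle in $\CCG{W,S}$ through the fork pair $\{x,y\}$ is forced to pick up a chord arising from this $\tD$-structure.

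For the backward direction ($\Leftarrow$), given conditions (i)--(iii), define $S' := (S \setminus \{y, r\}) \cup \{t\}$, where $t \in W$ is an explicit word in $r$ and the elements of $B$ designed to invert \cref{lem:blowingup}. In the $\tD_{2n+1}$ case, $t$ is chosen so that $\{b_1, \ldots, b_{2n-1}, x, t\}$ generates a parabolic of type $\tB_{2n+1}$ with $t$ at the $4$-labeled end, satisfying $txt = y$ and having $r$ as the longest element of $\langle B \cup \{t\} \rangle = \langle B, r\rangle$. The proof then proceeds by verifying: (a) $S'$ generates $W$, since $y = txt$ and $r$ is recoverable as a word in $B \cup \{t\}$; (b) the Coxeter matrix $M_{S'}$ predicted by \cref{lem:blowingup} genuinely presents $W$ as a Coxeter group on $S'$. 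The analogous construction handles the $\tI_2(2n+1)$ case with $t$ as a suitable word in $r$ and $\{x, y\}$.

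The main obstacle is part~(b) above: ruling out that $\langle S' \mid M_{S'} \rangle$ is only a proper quotient of $W$ through some hidden relation. Here condition~(iii) does the crucial work --- without the chord requirement, a finite-labeled cycle through $\{x, y\}$ in $\CCG{W,S}$ could produce an identification in $W$ not derivable from the Coxeter relations encoded by $M_{S'}$. Condition~(ii) complements this by excluding unexpected finite-order interactions of $\{x, y\}$ with generators outside $B \cup B^\perp$. Together they guarantee that the new presentation captures $W$ exactly, and verifying this carefully via parabolic matching (\cref{thm:basicmatchingthm}) and the explicit manipulations from \cref{lem:blowingup} forms the technical heart of the argument.
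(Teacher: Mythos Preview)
The paper does not actually prove this theorem; it is stated with a citation to \cite{MihalikRatcliffe-Ranks} and no proof is given. So there is no ``paper's own proof'' to compare against, and your proposal stands or falls on its own.

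There is a genuine gap in your forward direction. You claim that, given some $S'$ with $\vert S'\vert < \vert S\vert$, one may reduce to the case where $S$ is literally obtained from $S'$ by a single blow-up. But neither \cref{obs:maxranknoblowups} nor \cref{cor:navigatingverticallayers} gives you this. The Maximum Rank Theorem tells you that $S'$ (the smaller set) admits a pseudo-transposition and can be blown up to some $S''$ with $\vert S''\vert = \vert S'\vert + 1$; it says nothing about $S$. Likewise, \cref{cor:navigatingverticallayers} decomposes an upward path into blow-ups, landing at \emph{some} vertex in the layer of $S$ --- possibly followed by a horizontal edge to reach $S$ itself. That horizontal edge is exactly the problem: $S$ and $S''$ may be distinct vertices in $\mc{G}_{\vert S\vert}$, and you have no control over what a horizontal move does to the structural features (i)--(iii). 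So you cannot simply ``read off'' the conditions from the blow-up formula in \cref{lem:blowingup}, because $S$ need not have that form. Establishing that the existence of \emph{any} smaller Coxeter generating set forces (i)--(iii) on $S$ is precisely the nontrivial content of the theorem in \cite{MihalikRatcliffe-Ranks}, and it requires the matching machinery applied directly to $S$ and $S'$, not a reduction to the blow-up lemma.

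Your backward direction is closer in spirit to what one expects, but the sketch of why condition~(iii) guarantees that $\langle S' \mid M_{S'}\rangle$ is not a proper quotient of $W$ is too vague to count as an argument; this is where the real work lies and you have not indicated how it is carried out.
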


The relationship between blow-downs and blow-ups is summarized in the following, a proof of which can be deduced from the proof of~\cite[Theorem~4.1]{MihalikRatcliffe-Ranks}.

\begin{thm}[{Moving down in the galaxy\ ~\cite{MihalikRatcliffe-Ranks}}] \label{thm:downwardsnavigation}
A vertex $\CCG{W,S}$ in the Coxeter galaxy $\mc{G}$ has a neighbor in a lower layer $\mc{G}_{n-m}$ with $n > m > 0$ if and only if it has a neighbor $\CCG{W,S'} \in \mc{G}_{n-1}$ that admits a pseudo-transposition (\cref{def:pseudotransp}), along which $(W,S')$ can be blown-up as in \cref{lem:blowingup}.
\end{thm}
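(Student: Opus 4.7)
The plan is to bootstrap from \cref{thm:navigatingverticallayers}, which describes how upward moves in the galaxy are realized by blow-ups. Since the claim essentially asserts the converse (that down-moves of arbitrary height factor through a single blow-down into the adjacent layer), the proof reduces to iterated applications of that result together with one final appeal to it to obtain the pseudo-transposition.

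The direction ($\Leftarrow$) is immediate: if $\CCG{W,S'} \in \mc{G}_{n-1}$ is a neighbor of $\CCG{W,S}$, then $\CCG{W,S'}$ is itself a neighbor in a lower layer $\mc{G}_{n-m}$ with $m = 1$, and the pseudo-transposition hypothesis is not needed for this direction.

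For the ($\Rightarrow$) direction, suppose $\CCG{W,S}$ has a neighbor $\CCG{W,T} \in \mc{G}_{n-m}$ with $m \geq 1$. My first task is to produce, by induction on the layer gap, a neighbor of $\CCG{W,S}$ lying specifically in the adjacent layer $\mc{G}_{n-1}$. Applying \cref{thm:navigatingverticallayers} with $\CCGo_1 = \CCG{W,T}$ and $\CCGo_2 = \CCG{W,S}$ yields a vertex $\CCGo' \in \mc{G}_{n-m+1}$ which is adjacent to $\CCG{W,S}$ (and forms a $2$-simplex with $\CCG{W,T}$ and $\CCG{W,S}$), obtained by blowing up $\CCG{W,T}$ along a pseudo-transposition. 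Iterating this step with $\CCGo'$ playing the role of $\CCG{W,T}$ reduces the layer gap by one at each stage, so after $m-1$ iterations we reach the desired neighbor $\CCG{W,S'} \in \mc{G}_{n-1}$ of $\CCG{W,S}$.

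It remains to verify that this $\CCG{W,S'}$ itself admits a pseudo-transposition along which it can be blown up as in \cref{lem:blowingup}. One final application of \cref{thm:navigatingverticallayers}, now with $\CCGo_1 = \CCG{W,S'} \in \mc{G}_{n-1}$ and $\CCGo_2 = \CCG{W,S} \in \mc{G}_n$ (so that the gap is exactly $1$), concludes the argument: the theorem directly asserts that $\CCGo_1$ admits a blow-up of the required form, producing a common neighbor of $\CCG{W,S'}$ and $\CCG{W,S}$ in $\mc{G}_n$. I do not foresee a serious obstacle here, as all the structural content is packaged into \cref{thm:navigatingverticallayers}; the only care needed is to ensure the iteration terminates precisely at layer $n-1$ (so that the final appeal applies to the correct pair of layers), which is automatic from the bookkeeping.
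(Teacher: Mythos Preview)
Your proof is correct. The paper itself does not give an argument for this theorem; it merely states that a proof ``can be deduced from the proof of~\cite[Theorem~4.1]{MihalikRatcliffe-Ranks}'', i.e., from the blow-down machinery developed there. Your approach instead derives the result entirely from \cref{thm:navigatingverticallayers} (hence ultimately from the Maximum Rank Theorem of \cite{MihalikRatcliffeTschantz}), bypassing the explicit blow-down criteria of \cref{thm:blowdown} and the reference \cite{MihalikRatcliffe-Ranks}. This is a genuine simplification: once \cref{thm:navigatingverticallayers} is available, your iteration makes \cref{thm:downwardsnavigation} an immediate corollary rather than a separate appeal to the literature. The only cosmetic point is that in the final step the blow-up of $\CCG{W,S'}$ need not land exactly on $\CCG{W,S}$ (it could produce a different neighbor in $\mc{G}_n$), but that is irrelevant to the statement being proved, and you do not claim otherwise.
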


We stress that the statement above does not imply that a blow-down is the exact inverse move of a blow-up. For instance, $\CCG{W,S}$ might first connect to some other point $\CCG{W,S''} \in \mc{G}_{\vert S'\vert}$ without pseudo-transpositions and one might need an intermediate horizontal move connecting $\CCG{W,S''}$ to the neighbor $\CCG{W,S'}$ which does exhibit a pseudo-transposition. This further motivates the investigation of horizontal moves, which will be addressed in more detail in \cref{subsec:horizontalmoves}.

\subsection{Shape and connected components of the galaxy}
\label{subsec:components}

This section concerns the connected components of the Coxeter galaxy and their `shapes'. Though it is easy to check that the galaxy is infinite dimensional, we prove that each of its components is finite dimensional and provide an upper bound on the number of vertices in a component. 
In order to compute the bound we need some statistics on complete Coxeter graphs. 

\begin{dfn}[Statistics on complete Coxeter graphs.] \label{def:statistics}
Let $\CCGo=\CCG{W,S}$ be a complete Coxeter graph and let, by definition,   
\begin{enumerate}
    \item $u_\CCGo$ be the number of distinct blow-ups the graph $\CCGo$ admits, 
    \item $d_\CCGo$ be the number of distinct blow-downs the graph $\CCGo$ admits, and 
    \item $p_\CCGo$ be the number of distinct standard parabolic subgroups of $W$ of types $\tB_{2n+1}$, $\tD_{2n+1}$, $\tI_2({4n+2})$ or $\tI_2(2n+1)$ {for $n\geq 1$} . 
\end{enumerate}
\end{dfn}

A first observation is that the Galaxy contains arbitrarily long (vertical) paths. This is a direct consequence of the next example. 

\begin{exm}
For every $k \geq 1$ and $n \geq 1$, let $W$ be the $n$-fold direct product of the group of type $\tI_2(4k+2)$. In each of the $n$ copies of $\tI_2(4k+2)$ one can find a pseudo-transposition (cf. \cref{def:pseudotransp}), so that the given group can be blown-up. In total $n$ distinct blow-ups are possible. Notice that each blow-up of the original graph yields the same vertex in the next  layer as the resulting complete Coxeter graphs (after having applied any one of the available blow-ups) are isomorphic.
\end{exm}

The reader might rightfully protest that the example above is uninteresting in the sense that it uses a (spherical) reducible Coxeter group. But it is possible to provide (infinite) irreducible examples which produce arbitrarily big simplices.

\begin{dfn}[Starlets\footnote{We chose this name since the corresponding presentation diagram looks like a little star centered around $s_0$.}]
\label{def:starlet}
    A \emph{starlet} is a complete (simplicial) edge-labeled graph $\CCGo$ on a vertex set $S = \Menge{s_0,s_1,\ldots,s_{n-1}}$ which is constructed as follows. Take $k_1,\ldots,k_{n-1}$ to be pairwise distinct positive integers. Connect $s_0$ to each $s_i$, $i\neq 0$, with an edge labeled $4k_i+2$, and connect all other pairs $s_i$, $s_j$ with $i\neq j$ by an edge labeled $\infty$. 
\end{dfn}

\begin{lem}[{Little stars in the galaxy}] \label{lem:starlet}
Let $n > 1$ be arbitrary.
\begin{enumerate}
    \item A starlet on $\vert S\vert =n$ vertices is an irreducible complete Coxeter graph $\CCG{W,S} \in \mc{G}_n$, which is an isolated point in $\mc{G}_{\leq n}$ and allows for exactly $n-1$ distinct blow-ups. (That is, $u_{\CCG{W,S}} = n-1$.) Moreover, $W$ is spherical if and only if $n=2$. 
    \item The connected component $\mc{C}$ of a starlet intersects nontrivially with all layers $\mc{G}_{n+k}$ with $k\in\{0,1,\ldots, n-1\}$. 
    \item The component $\mc{C}$ contains a simplex $\sigma_{\mrm{max}}$ of dimension $\sum_{i=1}^{n-1}\binom{n-1}{i}$ and, for each $i = 0, \ldots, n-2$, a horizontal simplex $\sigma_i \subseteq \mc{G}_{n+i}$ of dimension $\binom{n-1}{i}-1$.
\end{enumerate}
\end{lem}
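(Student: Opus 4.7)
For part~(i), I would begin by verifying the easy structural claims. The complete Coxeter graph $\CCG{W,S}$ has $n$ vertices by construction, so lies in $\mc{G}_n$, and its Coxeter--Dynkin diagram is connected since every edge incident to $s_0$ has label $4k_j+2 \geq 6$; hence the system is irreducible. To count blow-ups, I would classify the basic subsets (cf.~\cref{def:basicsubsets}) of $(W,S)$: any $T \subseteq S$ containing two generators $s_i, s_j$ with $i,j \neq 0$ spans an infinite parabolic, since $\{s_i, s_j\}$ carries label $\infty$. Thus the only basic subsets are $J_j := \{s_0, s_j\}$ for $j = 1, \ldots, n-1$, each of type $\tI_2(4k_j+2)$. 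I would then check the conditions of \cref{def:pseudotransp} for each $J_j$: condition~(ii) holds since $4k_j+2 = 2(2k_j+1)$ with $2k_j+1 \geq 3$ odd; condition~(i) is satisfied by the choice $t = s_j$, since $m_{s_l, s_j} = \infty$ for every $l \neq 0, j$, whereas $t = s_0$ would fail because $m_{s_l, s_0} = 4k_l+2 \notin \{2, \infty\}$. This yields exactly $n-1$ pseudo-transpositions.

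For the isolation of $\CCG{W,S}$ in $\mc{G}_{\leq n}$, I would rule out downward and horizontal neighbors separately. Downward edges are excluded by \cref{thm:blowdown,thm:downwardsnavigation}, as any blow-down requires a basic subset of type $\tD_{2n+1}$ or $\tI_2(2n+1)$, which the starlet does not possess. For horizontal neighbors in $\mc{G}_n$, I would apply \cref{thm:basicmatchingthm} to a hypothetical rank-$n$ Coxeter generating set $S'$ for $W$: the basic subsets $J_j$ of $S$ match bijectively to basic subsets of $S'$, each paired either isomorphically (to a $\tI_2(4k_j+2)$) or to the strictly smaller $\tI_2(2k_j+1)$. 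A case analysis exploiting the pairwise distinctness of the $k_j$, together with the constraint $\vert S'\vert = n$ and overlap conditions among basic subsets in $S'$, should force all matchings to be isomorphic and the defining graphs of $S$ and $S'$ to agree. I expect establishing this horizontal rigidity to be the main obstacle of the proof.

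For parts~(ii) and~(iii), I would show that the $n-1$ blow-ups act independently. After a blow-up along $J_j$ via \cref{lem:blowingup}, every remaining $J_l = \{s_0, s_l\}$ with $l \neq j$ persists as a basic subset of the new system, because the added generators $\tilde{s}_j := s_j s_0 s_j$ and $w_{J_j}$ satisfy $m(\cdot, s_l) = \infty$; in particular, $s_l$ is still a pseudo-transposition at $J_l$. Iterating this observation, one can blow up any subset $I \subseteq \{1, \ldots, n-1\}$ in any order and obtain a Coxeter system of rank $n + \vert I\vert$ whose complete Coxeter graph $\CCGo_I$ carries the odd label $2k_j+1$ on the edge coming from $J_j$ when $j \in I$ and the even label $4k_j+2$ when $j \notin I$. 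Since the $k_j$ are pairwise distinct, different subsets $I$ yield non-isomorphic edge-labeled graphs, producing $2^{n-1}$ pairwise distinct vertices of $\mc{C}$. They all represent the same underlying group $W$, so by \cref{def:Galaxy} they are pairwise adjacent and span the simplex $\sigma_{\mrm{max}}$ of dimension $2^{n-1} - 1 = \sum_{i=1}^{n-1}\binom{n-1}{i}$. Restricting to $\vert I\vert = i$ yields the horizontal simplex $\sigma_i \subseteq \mc{G}_{n+i}$ of dimension $\binom{n-1}{i} - 1$, which also proves~(ii).
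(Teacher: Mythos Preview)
Your treatment of parts~(ii) and~(iii) follows the same route as the paper: iterate the $n-1$ available blow-ups over all subsets $I \subseteq \{1,\ldots,n-1\}$ and use the pairwise distinctness of the $k_j$ to conclude that the resulting $2^{n-1}$ complete Coxeter graphs are pairwise non-isomorphic, hence span the required simplices. You are in fact more explicit than the paper about why distinct $I$ yield distinct vertices (via the parity of the surviving labels).

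The substantive difference is in part~(i), specifically in establishing that the starlet is isolated in $\mc{G}_{\leq n}$. You correctly rule out neighbors in lower layers via \cref{thm:blowdown}, since the starlet has no basic subset of type $\tD_{2m+1}$ or $\tI_2(2m+1)$. For horizontal neighbors you propose a direct case analysis through \cref{thm:basicmatchingthm}, which you flag as the main obstacle and do not actually carry out. The paper sidesteps this entirely: it observes that the starlet is an \emph{even} Coxeter system (all edge labels lie in $2\N \cup \{\infty\}$) and invokes the theorem of Bahls~\cite{BahlsThesis} and Mihalik~\cite{MihalikEvenIsoThm} that an even Coxeter group admits a unique even Coxeter system, which moreover cannot be blown down. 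Isolation in $\mc{G}_{\leq n}$ then follows immediately. Your route would make the argument more self-contained by avoiding this external result, but at the cost of the matching-theoretic case analysis you anticipate; the paper's route is a one-liner once the evenness is noticed.
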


\begin{proof}
Let $\CCGo$ be a starlet. 
The Coxeter system $(W,S)$ with $\vert S \vert = n$ having $\CCGo$ as its complete Coxeter graph is \emph{even}, i.e., all of its labels are either even positive integers or $\infty$. The fact that $W$ is infinite if and only if $n > 2$ is immediate from the definition. By work of Bahls~\cite{BahlsThesis} and Mihalik~\cite{MihalikEvenIsoThm}, even Coxeter groups have {unique} even Coxeter systems, which cannot be blown down by \cref{thm:blowdown}(i). In particular, the even presentation has smallest possible rank among all Coxeter presentations for the given group, again by \cref{thm:blowdown}. Moreover, such a system cannot admit a horizontal neighbor. Indeed, by uniqueness of even presentations such a potential neighbor would have to be a noneven system for the given Coxeter group, which (by \cite[Theorem~1]{MihalikEvenIsoThm}) could be transformed into a complete Coxeter graph for the same group with one less vertex, contradicting minimality of the rank of the unique even presentation. It thus follows that $\CCG{W,S}$ is isolated within $\mc{G}_{\leq n}$. 

By construction, each generator $s_i \neq s_0$ of $W = \spans{S}$ satisfies the conditions of \cref{def:pseudotransp}. Since the corresponding integers $4k_1+2,\ldots,4k_{n-1}+2$ yielding pseudo-transpositions for $(W,S)$ are all distinct, $\CCG{W,S}$ admits exactly $n-1$ blow-ups. Moreover, after applying all such blow-ups successively, the resulting vertex lies in $\mc{G}_{2n-1}$ and has edge-labels contained in the (set) $\Menge{2,\infty,2k_1+1,\ldots,2k_{n-1}+1}$, hence cannot be further blown up; see \cref{obs:maxranknoblowups}.

To obtain the simplices of prescribed dimensions, observe that $\CCG{W,S} \in \mc{G}_{n}$ can be blown up to $\mc{G}_{n+1}$ in $n-1$ distinct manners, one for each pseudo-transposition. Inductively, each neighbor of $\CCG{W,S}$ in layer $\mc{G}_{n+i}$, for $0 \leq i < n-1$, can be connected (upwards) with  $\binom{n-1}{i}$ different neighbors in layer $\mc{G}_{n+i+1}$, one for each distinct available blow-up (that has not been applied, yet). By definition, the $\binom{n-1}{i}$ neighbors of $\CCG{W,S}$ contained in layer $\mc{G}_{n+i}$ form a complete graph and thus span a $(\binom{n-1}{i}-1)$-simplex entirely contained in $\mc{G}_{n+i}$. Taking all such vertices at once, from $\mc{G}_n$ to the top layer $\mc{G}_{2n-1}$, we obtain the simplex $\sigma_{\mrm{max}}$ of dimension $\sum_{i=1}^{n-1}\binom{n-1}{i}$.
\end{proof}

A direct consequence is the following. 

\begin{cor}[{Dimension of the galaxy}] \label{cor:GalaxyandVCareinfinitedimensional}
Both the Coxeter galaxy $\mc{G}$ and its vertical core $\VC$ are infinite dimensional. They contain arbitrarily large simplices all of whose vertices can be taken to be nonspherical irreducible Coxeter systems.
\end{cor}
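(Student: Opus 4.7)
The plan is to derive this corollary directly from Lemma~\ref{lem:starlet}, which already does essentially all the work. For each $n > 1$, that lemma produces a starlet whose connected component contains a simplex $\sigma_{\max}$ of dimension $\sum_{i=1}^{n-1}\binom{n-1}{i} = 2^{n-1}-1$; letting $n \to \infty$ gives arbitrarily large simplices in $\mc{G}$, which is therefore infinite dimensional.

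For the vertical core, I would use the horizontal subsimplices $\sigma_i \subseteq \mc{G}_{n+i}$ of dimension $\binom{n-1}{i}-1$ provided by the same lemma. Since each $\sigma_i$ lies entirely inside a single layer, all of its edges are horizontal, and by \cref{def:Core} horizontal edges are never removed when passing from $\mc{G}$ to $\VC$. Hence $\sigma_i \subseteq \VC$, and already $i=1$ yields simplices of dimension $n-2$, so $\VC$ is infinite dimensional as well.

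Finally I need to check that all vertices of these simplices correspond to \emph{irreducible} Coxeter systems. The starlet itself is irreducible by Lemma~\ref{lem:starlet}(i), and its Coxeter--Dynkin diagram is literally a star centered at $s_0$. Applying the blow-up formula of Lemma~\ref{lem:blowingup} along a pseudo-transposition $s_j$ of the starlet (so $v = s_0$ and $J = \{s_0, s_j\}$) replaces the finite edge between $s_0$ and $s_j$ by a finite edge of label $2k_j+1$ between $s_0$ and $s_j s_0 s_j$, keeps the remaining rays of the star untouched, and introduces a single new vertex $w_J$ joined by $\infty$-labeled edges to every other leaf $s_{j'}$. Since $\infty$-labels still produce edges in the Coxeter--Dynkin diagram (cf.\ \cref{def:CoxDiag}), the diagram stays connected. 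More generally, inspection of Lemma~\ref{lem:blowingup} shows that any blow-up replaces a vertex $t$ by two vertices $tvt$ and $w_J$, both of which inherit the $\infty$-neighbors of $t$, so connectivity is preserved inductively. I foresee no real obstacle: once Lemma~\ref{lem:starlet} is in hand, the corollary is essentially a bookkeeping exercise, the only new observation being that horizontal simplices are automatically contained in $\VC$.
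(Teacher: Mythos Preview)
Your proposal is correct and follows the same route the paper takes: the corollary is stated there as ``a direct consequence'' of \cref{lem:starlet} with no further argument, and you simply unpack that. In fact you supply strictly more than the paper does. Two points are worth noting. First, your observation that the horizontal subsimplices $\sigma_i$ automatically lie in $\VC$ (since only vertical edges of height $\geq 2$ are removed) is exactly the right way to handle the vertical core, and the paper leaves this implicit. Second, your irreducibility check is a genuine detail the paper omits; your inductive argument via \cref{lem:blowingup} is sound, though strictly speaking it requires $n \geq 3$ so that the newly introduced vertex $w_J$ has at least one $\infty$-neighbour in the Coxeter--Dynkin diagram (for $n=2$ the single blow-up of a starlet is the reducible system $\tI_2(2k+1) \times \tA_1$). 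Since you are letting $n \to \infty$ this is harmless.
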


\begin{cor}[{Bounded heights}] \label{cor:heightisbounded}
Let $\CCGo$ be a vertex in layer $\mc{G}_n$ of $\mc{G}$ (or of $\VC$). The connected component of $\CCGo$ in $\mc{G}$ (resp. in $\VC$) is contained in $\mc{G}_{\leq 2n-1}$ (resp. in $\VC_{\leq 2n-1}$). This bound is optimal in the sense that there exist vertices in $\mc{G}_n$ whose connected components nontrivially intersect $\mc{G}_{2n-1}$.
\end{cor}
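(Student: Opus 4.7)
The plan is to prove the two parts of the corollary separately. For the optimality claim, \cref{lem:starlet} already does the job: a starlet on $n$ vertices belongs to $\mc{G}_n$, and its connected component contains vertices in $\mc{G}_{2n-1}$ via iterated adjacent-layer blow-ups, as described in parts (ii)--(iii) of that lemma. Since each of those edges is vertical of height one, this configuration also lives in $\VC$, settling optimality in both the galaxy and the vertical-core versions at once.

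For the upper bound the strategy is to translate the claim into a bound on blow-up chains. Every vertex in the connected component of $\CCG{W,S}$ is of the form $\CCG{W,S'}$ for some Coxeter generating set $S'$ of $W$ (cf.~\cref{def:Galaxy}), and by \cref{cor:navigatingverticallayers} such an $S'$ is obtained from $S$ by a finite sequence of blow-ups---each raising the rank by one---followed by at most a single rank-preserving horizontal edge. Hence I would reduce the problem to showing that any chain of successive blow-ups starting from a rank-$n$ Coxeter system has length at most $n-1$.

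The main technical step is then to analyze how a single blow-up modifies the pool of available pseudo-transpositions. Using the explicit matrix entries listed in \cref{lem:blowingup}, I would check that after blowing up along $t\in J$ with $\langle J\rangle$ of type $\tB_{2k+1}$ or $\tI_2(4k+2)$: the longest element $w_J$ commutes with all of $J':=(J\setminus\Menge{t})\cup\Menge{tvt}$ and has only $2$- or $\infty$-valued relations with the remaining generators, while $\langle J'\rangle$ itself becomes of type $\tD_{2k+1}$ or $\tI_2(2k+1)$. Neither of these types admits pseudo-transpositions, so neither $w_J$ nor $tvt$ becomes a new pseudo-transposition, and the induced relations between $tvt$ or $w_J$ and elements of $S\setminus J$ are too rigid to create a new $\tB$- or $\tI_2(\mbox{even})$-type spherical parabolic containing them. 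Thus each blow-up strictly consumes one pseudo-transposition opportunity without producing any new one, and the chain length is bounded by the number of pseudo-transposition opportunities already present in $(W,S)$.

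The hard part, which I expect to be the main obstacle, is to bound this initial number by $n-1$. I would attack it combinatorially starting from \cref{def:pseudotransp}: each pseudo-transposition parabolic $J$ carries a distinguished pair $(t,v)\in J\times J$ of noncommuting generators, and the condition that every $s\in S\setminus J$ either satisfies $\mst=\infty$ or commutes with all of $J$ forces enough overlap among distinct pseudo-transposition parabolics to preclude having $n$ of them simultaneously---with the starlet of \cref{lem:starlet} realizing the extremal configuration. As a backup, a quantitative enhancement of~\cite[Thm.~9.1]{MihalikRatcliffeTschantz} in the spirit of the Maximum Rank Theorem invoked in \cref{thm:navigatingverticallayers} should yield the same conclusion. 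Once this bound is established, the vertical-core version is immediate from \cref{obs:GalaxyandVChavesamevertices}: $\VC$ and $\mc{G}$ share their vertex sets, so $\VC_{\leq 2n-1}=\mc{G}_{\leq 2n-1}\cap\VC$.
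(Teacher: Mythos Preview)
Your approach is the same as the paper's: reduce the upper bound to the length of a blow-up chain via \cref{cor:navigatingverticallayers}, cap that length by a pseudo-transposition count, and invoke \cref{lem:starlet} for optimality. The paper's own proof is a three-sentence sketch that simply asserts ``(any neighbor of) $\CCG{W,S}$ can have at most $|S|-1$ pseudo-transpositions'' and concludes; your Step~3 (blow-ups consume but never create pseudo-transpositions) is exactly the justification the paper leaves implicit, and your Step~4 is precisely the assertion the paper takes for granted.

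One caveat that applies to both your Step~4 and the paper's one-liner: the literal number of pseudo-transpositions $t\in S$ is \emph{not} bounded by $|S|-1$. In the rank-two system of type $\tI_2(4k+2)$ both generators satisfy \cref{def:pseudotransp} (for the same $J=S$), so the count is $n$, not $n-1$. What is genuinely bounded by $n-1$ is the length of any chain of successive blow-ups, and your Step~3 analysis already isolates the right mechanism for this: each blow-up introduces a generator $w_J$ with only $2/\infty$ labels, and such generators can never participate in a $\tB_{2k+1}$- or $\tI_2(4k+2)$-parabolic, so after $k$ blow-ups at least $k$ of the $n+k$ generators are permanently excluded from any further $J$. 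Phrasing Step~4 as a bound on the chain length (or on the number of distinct pseudo-transposition parabolics $J$, each of which must live among the at most $n$ non-$w_J$ generators) rather than on the number of elements $t$ would close this gap and in fact sharpen the paper's own argument.
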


\begin{proof}
By \cref{cor:navigatingverticallayers}, any path from $\CCG{W,S} \in \mc{G}_n$ to higher layers can be replaced by a sequence of edges arising from blow-ups. Since (any neighbor of) $\CCG{W,S}$ can have at most $\vert S\vert -1$ pseudo-transpositions, the first claim follows. The fact that the bound is optimal was just proved in \cref{lem:starlet} using starlets.
\end{proof}

We have obtained local vertical bounds on the size of the galaxy. But what about the horizontal behavior? 

\begin{qst}[Diameter]
	\label{qst:diameter}
    Is there an algorithm which computes the diameter (or the dimension) of any given connected component of the Coxeter galaxy or of the vertical core using the graph of any of its vertices as an input?   
\end{qst}

We now explore layers horizontally. Recall from \cref{def:statistics} and \cref{thm:basicmatchingthm} that the number $p_\CCGo$ bounds (from above) the number of neighbors of $\CCGo \in \mc{G}$ having basic subsets (cf. \cref{def:basicsubsets}) that do not isomorphically match basic subsets of $\CCGo$. 

\begin{lem} \label{lem:layersarefinitedimensional}
Any vertex $\CCGo=\CCG{W,S}$ in an arbitrary layer $\mc{G}_k$ of the Coxeter galaxy has finitely many horizontal neighbors. Moreover, the number of neighbors is bounded above by $2^{p_{\CCGo}} \cdot \binom{k}{2}!$ and thus $\mc{G}_k$ is finite dimensional.
\end{lem}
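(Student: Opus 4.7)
The plan is to control horizontal neighbors of $\CCGo = \CCG{W,S}$ in $\mc{G}_k$ via the basic matching theorem. Let $\CCG{W,S'}$ be a horizontal neighbor, corresponding to a Coxeter generating set $S'\subseteq W$ with $|S'|=k$ and an abstract isomorphism $W\cong \spans{S'}$. I would split the counting into two stages, one for the ``basic skeleton'' of $S'$ inside $W$, and one for the remaining label placements.

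\emph{Stage 1: basic structure.} I would apply \cref{thm:basicmatchingthm} to the isomorphism $W\cong \spans{S'}$. It asserts that each basic subset $B'\subseteq S'$ matches uniquely with a basic subset $B\subseteq S$, and that the image of $\spans{B'}$ inside $W$ is a parabolic subgroup of one of the classical types $\tB_{2n+1}$, $\tD_{2n+1}$, $\tI_2(4n+2)$, or $\tI_2(2n+1)$. By definition, the total count of such parabolic subgroups of $W$ is $p_\CCGo$, which therefore bounds the number of ways of realizing the basic subsets of $S'$ as parabolics of $W$.

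\emph{Stage 2: the remaining labels.} Once the basic part of $S'$ is fixed inside $W$, the complete Coxeter graph $\CCG{W,S'}$ is determined by its labels on the $\binom{k}{2}$ edges. The multiset of those labels is in turn controlled by $(W,S)$ via the consequences of the basic matching theorem for Coxeter matrices discussed in~\cite{MihalikRatcliffeTschantz} (notably their Theorems~7.7 and~9.1). The number of edge-labeled complete graphs on $k$ vertices realizing a fixed multiset of $\binom{k}{2}$ labels is at most the number of permutations of the $\binom{k}{2}$ edges, namely $\binom{k}{2}!$. Combining the two stages yields the asserted bound $p_\CCGo\cdot \binom{k}{2}!$ on the number of horizontal neighbors of $\CCGo$. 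Finite-dimensionality then follows: every simplex of $\mc{G}_k$ contains some vertex, and its dimension is at most the horizontal degree of that vertex, which is finite by the above.

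The main obstacle is \emph{Stage 2}. The matching theorem controls the labels \emph{within} basic subsets directly, but extending this to the full multiset of entries of $M_{S'}$ --- including entries equal to $2$ or $\infty$, together with those coming from non-basic spherical subgroups and from the interaction between distinct basic subsets --- requires combining it carefully with its consequences for Coxeter matrices from~\cite{MihalikRatcliffeTschantz}. Once this multiset-control is in hand, the factorial count is an (admittedly generous) upper bound that takes care of the bookkeeping.
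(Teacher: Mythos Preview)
Your overall strategy (basic matchings plus a permutation count on the $\binom{k}{2}$ edge labels) is essentially the paper's, but Stage~1 rests on a misreading of \cref{thm:basicmatchingthm}. That theorem does \emph{not} say that every basic subset $B'\subseteq S'$ is of type $\tB_{2n+1}$, $\tD_{2n+1}$, $\tI_2(4n+2)$ or $\tI_2(2n+1)$; it says there is a dichotomy: either $B'$ matches some $B\subseteq S$ \emph{isomorphically} (same type, whatever that type may be), or the pair $\{B,B'\}$ falls into one of those special type-pairs. A system with a basic subset of type $\tE_6$, say, will have $p_\CCGo=0$ yet still have basic subsets to account for, so your Stage~1 bound ``$p_\CCGo$ controls the basic skeleton'' is not correct as stated.

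This error propagates to Stage~2: the Simplex Matching Theorem~\cite[Theorem~7.7]{MihalikRatcliffeTschantz}, which you need for the multiset-of-labels claim, applies precisely when all basic subsets match isomorphically. When some match non-isomorphically the multisets genuinely differ (e.g.\ $\tB_3$ carries labels $\{2,3,4\}$ while the matching $\tD_3=\tA_3$ carries $\{2,3,3\}$), so you cannot simply permute a fixed multiset. The paper's proof handles exactly this by splitting into two cases: first the maximal-rank (hence all-isomorphic-matching) case, where Simplex Matching gives the $\binom{k}{2}!$ bound directly; then the general case, where the factor $p_\CCGo$ enters only to bound the contribution of the \emph{non}-isomorphically matching basic subsets, while the isomorphic ones are again handled via Simplex Matching. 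Your argument becomes correct once you insert this case split and restrict the role of $p_\CCGo$ to the non-isomorphic matchings.
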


\begin{proof}
Let $\CCGo=\CCG{W,S} \in \mc{G}_k$ be given, where $\vert S \vert = k$. Assume first that $S$ is a maximal Coxeter generating set for $W$. We argue that $\CCG{W,S}$ has only finitely many neighbors in its layer $\mc{G}_{k}$. 
By the Maximum Rank Theorem~\cite[Theorem~9.1]{MihalikRatcliffeTschantz}, one deduces that the basic subsets of $(W,S)$ and of any of its neighbors within $\mc{G}_k$ match isomorphically (in the sense of \cref{thm:basicmatchingthm}). 
It then follows from the Simplex Matching Theorem \cite[Theorem~7.7]{MihalikRatcliffeTschantz} that the multisets of entries of the Coxeter matrix $M_S$ are the same as those of a Coxeter matrix of its neighbors in $\mc{G}_k$. Therefore the complete Coxeter graph of any neighbor of $\CCG{W,S}$ in $\mc{G}_{k}$ is obtained from $\CCG{W,S}$ by permuting labels (though not necessarily vertices!). Since $\CCG{W,S}$ is finite, there can be only finitely many (in fact at most $\binom{k}{2}!$) elements in $\mc{G}_{k}$ adjacent to $\CCG{W,S}$. 

Suppose now that $\CCGo$ is arbitrary. In that case $\CCG{W,S} \in \mc{G}_{k}$ is not necessarily in the highest possible layer and $S$ is not necessarily a maximal Coxeter system for $W$, but we can also argue with basic subsets. \cref{thm:basicmatchingthm} implies that either the basic subsets of $\CCG{W,S}$ isomorphically match those of its neighbors, or the non-matching basic subsets generate irreducible spherical parabolics of prescribed Coxeter types, which depend only on $\CCG{W,S}$. In the former case, we argue as in the last paragraph using the Simplex Matching Theorem~\cite[Theorem~7.7]{MihalikRatcliffeTschantz}. In the latter case, one has by \cref{thm:basicmatchingthm} that nonisomorphic matchings only occur either between types $\tB_{2n+1}$ and $\tD_{2n+1}$ or between types  $\tI_2(4n+2)$ and $\tI_2(2n+1)$ for some $n \geq 1$. In particular, one can directly inspect on $\CCG{W,S}$ whether these types occur. The total number of such parabolics is bounded above by $p_\CCGo$ as defined in \cref{def:statistics}. Thus $\CCG{W,S}$ has at most $\binom{k}{2}!\cdot 2^{p_\CCGo}$ neighbors within $\mc{G}_k$. The lemma follows. 
\end{proof}

We can now establish the following.

\begin{thm}[The galaxy is locally finite] \label{thm:FinitenessConnComps}
Every connected component of the Coxeter galaxy $\mc{G}$ is a finite dimensional simplex. 
\end{thm}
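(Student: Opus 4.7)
The plan is to deduce the theorem by assembling the structural results on vertical and horizontal navigation already established in this subsection. First observe that each connected component of $\mc{G}$ is automatically a simplex: by Definition~\ref{def:Galaxy}, adjacency in $\mc{G}$ is governed by the relation ``the underlying Coxeter groups are isomorphic,'' which is an equivalence relation and hence transitive. Thus the $1$-skeleton of a connected component is a complete graph, and flagness forces the component to be the corresponding (a priori possibly high-dimensional) simplex. It therefore suffices to show that each component has only finitely many vertices.

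Fix a connected component $\mc{C}$ of $\mc{G}$. Let $n_0 \geq 1$ be the smallest integer for which $\mc{C} \cap \mc{G}_{n_0} \neq \leer$ and pick a vertex $\CCGo_0 = \CCG{W,S_0} \in \mc{C} \cap \mc{G}_{n_0}$. By \cref{cor:heightisbounded} applied to $\CCGo_0$, every vertex of $\mc{C}$ lies in one of the finitely many layers $\mc{G}_{n_0}, \mc{G}_{n_0+1}, \ldots, \mc{G}_{2n_0-1}$. It thus suffices to prove that $\mc{C} \cap \mc{G}_{n_0+i}$ is finite for each $i = 0, 1, \ldots, n_0 - 1$.

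For $i = 0$, Lemma~\ref{lem:layersarefinitedimensional} bounds the number of horizontal neighbors of $\CCGo_0$ in $\mc{G}_{n_0}$ by $p_{\CCGo_0} \cdot \binom{n_0}{2}!$, so $\mc{C}\cap \mc{G}_{n_0}$ is finite. For $i\geq 1$, any $\CCGo' \in \mc{C}\cap \mc{G}_{n_0+i}$ is adjacent to $\CCGo_0$ via a vertical edge of height $i$ (as the component is a simplex). By \cref{cor:navigatingverticallayers}, such an edge may be replaced by a sequence of $i$ blow-ups between adjacent layers, possibly followed by a single horizontal move within $\mc{G}_{n_0+i}$. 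Since any Coxeter system admits only finitely many pseudo-transpositions (Definition~\ref{def:pseudotransp} identifies them as specific elements of the finite generating set), each intermediate vertex admits only finitely many blow-ups; iterating $i$ times shows that only finitely many vertices of $\mc{G}_{n_0+i}$ are reachable from $\CCGo_0$ through $i$ consecutive blow-ups. Combining this with Lemma~\ref{lem:layersarefinitedimensional} applied to each such intermediate endpoint, we conclude that $\mc{C} \cap \mc{G}_{n_0+i}$ is finite.

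Summing over the finitely many relevant layers gives that $\mc{C}$ contains only finitely many vertices, so the simplex $\mc{C}$ is finite dimensional. The key technical ingredient is \cref{cor:navigatingverticallayers}: it reduces arbitrary-height vertical adjacencies inside $\mc{C}$ to iterated single-layer blow-ups, which together with the finiteness of the set of pseudo-transpositions of any fixed Coxeter system is what ultimately tames the vertical growth of $\mc{C}$ across the bounded range of layers dictated by \cref{cor:heightisbounded}.
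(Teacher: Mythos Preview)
Your proof is correct and rests on the same two pillars as the paper's argument: \cref{cor:heightisbounded} to confine the component to finitely many layers, and \cref{lem:layersarefinitedimensional} to bound each layer slice. The difference is that for $i \geq 1$ you take an unnecessary detour through \cref{cor:navigatingverticallayers} and the finiteness of pseudo-transpositions, whereas the paper handles all layers uniformly exactly as you did for $i=0$: since the component $\mc{C}$ is a simplex, any two vertices in $\mc{C}\cap\mc{G}_{n_0+i}$ are horizontal neighbors of each other, so picking any one of them and applying \cref{lem:layersarefinitedimensional} immediately bounds $|\mc{C}\cap\mc{G}_{n_0+i}|$. Your blow-up argument is valid but superfluous; the simplex structure already does the work.
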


\begin{proof}
The fact that a connected component of any point in $\mc{G}$ is a simplex is immediate from \cref{def:Galaxy}. Let $\CCGo \in \mc{G}$ be an arbitrary vertex, lying in layer $\mc{G}_k$ say. By \cref{cor:heightisbounded}, its connected component is entirely contained in $\mc{G}_{\leq 2k-1}$. By \cref{lem:layersarefinitedimensional}, every vertex in this component has only finitely many neighbors within each layer, whence the theorem.
\end{proof}

\begin{cor}
Every connected component of the vertical core $\VC$ of the Coxeter galaxy is a finite dimensional subcomplex.
\end{cor}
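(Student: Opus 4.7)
The plan is essentially to combine Proposition~\ref{prop:verticalCore} with \cref{thm:FinitenessConnComps}. First I would fix an arbitrary connected component $\mc{C}'$ of $\VC$ and use Proposition~\ref{prop:verticalCore} to identify $\mc{C}'$ with a connected component $\mc{C}$ of the ambient galaxy $\mc{G}$. By Observation~\ref{obs:GalaxyandVChavesamevertices}, the vertex sets of $\VC$ and $\mc{G}$ coincide, so this identification passes directly to vertices: the vertex set of $\mc{C}'$ equals that of $\mc{C}$.

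Next I would invoke \cref{thm:FinitenessConnComps}, which guarantees that $\mc{C}$ is a finite-dimensional simplex, and in particular has only finitely many vertices. Since $\mc{C}'$ is a subcomplex of $\mc{C}$ built on the same (finite) vertex set — obtained simply by discarding the vertical edges of height $\geq 2$ together with the open simplices containing them, as prescribed by \cref{def:Core} — its dimension is bounded above by $\dim \mc{C} < \infty$, proving the corollary.

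There is no real obstacle here; the argument amounts to noting that dimension cannot increase when passing to a subcomplex, once one knows that connected components are not shattered when moving from $\mc{G}$ to $\VC$. All of the nontrivial work has already been absorbed into Proposition~\ref{prop:verticalCore} and \cref{thm:FinitenessConnComps}, so the proof should occupy only a couple of lines.
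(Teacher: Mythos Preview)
Your argument is correct and, like the paper, deduces the corollary directly from \cref{thm:FinitenessConnComps}; the paper in fact gives no proof at all, treating it as immediate. One minor point: your appeal to Proposition~\ref{prop:verticalCore} is unnecessary (and in the paper that proposition actually comes \emph{after} this corollary). Since $\VC$ is by construction a subcomplex of $\mc{G}$, any connected component of $\VC$ is automatically contained in a connected component of $\mc{G}$, and containment---not equality of vertex sets---is all you need to bound the dimension.
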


Now we know that the galaxy and its vertical core are arbitrarily big though locally finite. The size of the simplices grows as we move up layers. But how fast? Notice that, by \cref{cor:heightisbounded}, the height of a connected component is linear as a function of the rank of their bottom-most vertices. In contrast, \cref{lem:layersarefinitedimensional} gives a (highly nonoptimal) upper bound on the dimensions of simplices within each layer. 

\begin{qst}
\label{qst:Growth of dimension} 
How big is the Coxeter galaxy or its vertical core up until a fixed layer $n$? How fast do the functions 
$n\mapsto \mathrm{dim}(\mc{G}_n)$, $n\mapsto \mathrm{dim}(\mc{G}_{\leq n})$,  
$n\mapsto \mathrm{dim}(\VC_n)$, and $n\mapsto \mathrm{dim}(\VC_{\leq n})$ grow?
\end{qst}
    
\begin{qst} \label{qst:growth_verticalVShorizontal}
Given a connected component $\mc{C}$ of the Coxeter galaxy or of its vertical core $\VC$, how large is it `horizontally' in comparison to its height? That is, defining the width and the height of $\mc{C}$ as 
\begin{align*} 
\mathrm{wid}(\mc{C}) & := \mathrm{max}\{ \mathrm{dim}(\mc{C} \cap \mc{G}_n) \mid n \in \N\} \quad \text{ and } \\
\mathrm{ht}(\mc{C}) & := \mathrm{max}\{ \vert m - k \vert \, \mid \, \mc{C} \cap \mc{G}_m \neq \leer \neq \mc{C} \cap \mc{G}_k \},
\end{align*}
respectively, what is the ratio $\frac{\mathrm{wid}(\mc{C})}{\mathrm{ht}(\mc{C})}$?

Let $\mc{G}_n$ be the lowest layer that a component $\mathcal{C}$ intersects nontrivially. 
What is the asymptotic behavior of $\mathrm{wid}(\mc{C})$, $\mathrm{ht}(\mc{C})$ or their ratio, when $n$ goes to infinity? \end{qst}

In view of \cref{obs:GalaxyandVChavesamevertices}, the concepts of width and height of a connected component $\mc{C} \subseteq \mc{G}$ coincide with those for $\mc{C} \cap \VC$ (where we replace $\mc{G}_n$ by $\VC_n$ in the definitions). The next proposition implies that the vertex sets of $\mc{C}$ and $\mc{C}\cap\VC$ agree, so that we may restrict ourselves to understanding the vertical core in order to solve the isomorphism problem.

\begin{prop}[Reduction to the core] \label{prop:verticalCore}
Every connected component of the vertical core $\VC$ contains the same set of vertices as the connected component of any of its elements in the galaxy $\mc{G}$. 
\end{prop}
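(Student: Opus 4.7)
By Remark \ref{obs:GalaxyandVChavesamevertices}, $\mc{G}$ and $\VC$ have the same vertex set, so the proposition is equivalent to the statement that any two vertices of $\mc{G}$ that lie in a common connected component of $\mc{G}$ already lie in a common connected component of $\VC$. Since the inclusion $\VC\hookrightarrow\mc{G}$ cannot merge components, it suffices to show the converse, i.e.\ that every edge of $\mc{G}$ can be replaced by a path in $\VC$ between its endpoints. The only edges of $\mc{G}$ that are absent from $\VC$ are the vertical edges of height $m\geq 2$, so the whole plan boils down to detouring around such edges.

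The plan is to induct on the height $m$. Given an edge $\{\CCGo_1,\CCGo_2\}$ in $\mc{G}$ with $\CCGo_1\in\mc{G}_k$ and $\CCGo_2\in\mc{G}_{k+m}$, \cref{thm:navigatingverticallayers} produces a vertex $\CCGo_3\in\mc{G}_{k+1}$ such that $\CCGo_1,\CCGo_2,\CCGo_3$ span a $2$-simplex in $\mc{G}$. The edge $\{\CCGo_1,\CCGo_3\}$ is vertical of height one, hence lies in $\VC$. The edge $\{\CCGo_3,\CCGo_2\}$ is vertical of height $m-1$: if $m=2$ this is again a height-one edge already in $\VC$, so I am done; if $m>2$, the inductive hypothesis supplies a path in $\VC$ from $\CCGo_3$ to $\CCGo_2$, and concatenation with $\{\CCGo_1,\CCGo_3\}$ yields the desired $\VC$-path from $\CCGo_1$ to $\CCGo_2$. (Essentially the same proof can be phrased noninductively by invoking \cref{cor:navigatingverticallayers}: the single edge $\{\CCGo_1,\CCGo_2\}$ is itself a vertical path from layer $k$ to layer $k+m$, which the corollary replaces by a sequence of $m$ height-one vertical edges possibly followed by one horizontal edge in $\mc{G}_{k+m}$, and every edge in this sequence lies in $\VC$ by definition.)

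The content is thus genuinely borrowed from \cref{thm:navigatingverticallayers}; there is no new delicate argument to make. The only step that could in principle be an obstacle is making sure that the intermediate blow-up vertex promised by \cref{thm:navigatingverticallayers} really sits in the adjacent layer and is simultaneously adjacent to both endpoints in $\mc{G}$, but this is exactly the conclusion of that theorem, whose proof in turn rests on the Maximum Rank Theorem from \cite{MihalikRatcliffeTschantz}. Once this is granted, the induction (or the single appeal to \cref{cor:navigatingverticallayers}) closes the argument.
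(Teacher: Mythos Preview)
Your proposal is correct and follows essentially the same approach as the paper: reduce to showing that every vertical edge of height $m\geq 2$ can be detoured through $\VC$, and then invoke \cref{thm:navigatingverticallayers} (or equivalently \cref{cor:navigatingverticallayers}) to produce the required chain of height-one edges. The paper's proof is just the noninductive version you mention parenthetically, appealing directly to \cref{cor:navigatingverticallayers}.
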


\begin{proof}
Let $\mc{C} \subseteq \VC$ be an arbitrary connected component. The vertex sets of $\VC$ and $\mc{G}$ agree and, by definition of $\VC$, only edges of height strictly greater than one and corresponding higher-dimensional open cells are missing in the vertical core. Hence $\mc{C} \cap \mc{G}_k = \mc{C}$. 

Thus it remains to show that if $\mc{C}$ has a vertex $\CCGo_1\in \mc{G}_k$ and a vertex $\CCGo_2\in \mc{G}_{k+m}$ for $m\geq 2$, then there must be a path entirely contained in $\VC$ connecting $\CCGo_1$ to $\CCGo_2$. But the existence of such a path is guaranteed by \cref{cor:navigatingverticallayers}.
\end{proof}

\subsection{Profinite completions and the dimension of the first layers}
\label{subsec:rank3}

The goal of this subsection is to paint a complete picture of $\mc{G}_{\leq 3}$, the first three layers of the Coxeter galaxy $\mc{G}$. By vacuity we include the trivial group as the (only) Coxeter group of rank zero. In the layer $\mc{G}_1$ there is a single $1$-generated Coxeter group, namely the cyclic group $C_2$ of order two. The second layer is given by the family of all dihedral groups. The study of the third layer occupies most of this subsection. We recall the following definition. 

\begin{dfn}[Triangle groups]
\label{def:triangle group}
A \emph{triangle Coxeter group}, in the following simply referred to as \emph{triangle group},  is a Coxeter group $W$ having a complete Coxeter graph $\CCG{W,S} \in \mc{G}_3$. Any such group can be given by a Coxeter presentation of the form 
\[\spans{a,b,c \mid a^2, b^2, c^2, (ab)^p, (bc)^q, (ca)^r},\]
where $p = m_{a,b}$, $q = m_{b,c}$, $r = m_{c,a}$, where as usual we interpret $(st)^\infty$ as `no relation between  $s$ and $t$'. 
A triangle group with defining orders $p,q,r \in \N_{\geq 2} \cup \{\infty\}$ as above will be denoted by $\Delta(p,q,r)$. 
\end{dfn}

The triangle groups $\Delta(p,q,r)$ can be realized as reflection groups of the $2$-sphere $\Sph^2$, the Euclidean plane $\R^2$, or the hyperbolic plane $\Hyp^2$, depending on whether the value $\frac{1}{p}+\frac{1}{q}+\frac{1}{r}$ is greater than, equal to, or less than one, respectively. (Here we use the convention $\frac{1}{\infty}=0$.) The multiset $\Menge{p,q,r}$ and the triangle group is called spherical, Euclidean, or hyperbolic accordingly. 
Most triangle groups are hyperbolic and act on the hyperbolic plane with finite covolume, with generators being reflections about hyperbolic lines.

Any triangle group $\Delta(p,q,r)$ contains a subgroup of index two given by the presentation 
\[\Gamma(p,q,r) \cong \spans{x,y,z \mid x^p, y^q, z^r, xyz}.\]
Such a subgroup $\Gamma(p,q,r)$ is sometimes also referred to as an (ordinary) triangle group in the literature \cite{BridsonConderReid}. It is also known as a \emph{von Dyck} group or a \emph{rotation triangle (sub)group}. 
In the standard realization of the ambient triangle Coxeter groups, the rotation subgroup $\Gamma(p,q,r)$ can be interpreted as the subgroup of $\Delta(p,q,r)$ generated by rotations of prescribed orders $p$, $q$, and $r$. 
If the triple $\Menge{p,q,r}$ is hyperbolic, then $\Gamma(p,q,r)$ is a Fuchsian group as it is a discrete subgroup of $\Isom^+(\Hyp^2) \cong \PSL_2(\R)$; we refer the reader to~\cite{KatokFuchsian} for background on such groups and their geometric properties. 

In the sequel we employ profinite techniques in the study of the Coxeter galaxy. Recall that a (topological) group is \emph{profinite} if it is compact, Hausdorff, and totally disconnected. The \emph{profinite completion} $\hut{G}$ of a finitely generated group $G$ is the inverse limit of its (inverse) system of finite quotients; cf.~\cite{RibesZalesskii} for background and standard results on profinite groups. We say that a group $G$ in a family of groups $\mc{F}$ is \emph{profinitely rigid} (in $\mc{F}$) in case $\hut{G} \cong \hut{H}$ implies $G \cong H$ for all $H \in \mc{F}$. 

For an overview on profinite rigidity and in order to gain a better understanding of its relevance in group theory we refer the interested reader to the excellent ICM survey of Reid~\cite{ReidICM}. One motivation to look at finite quotients of Coxeter groups is the following strong form of profinite rigidity for von Dyck triangle groups $\Gamma(p,q,r)$. 

\begin{thm}[{Bridson--Conder--Reid~\cite[Section~8]{BridsonConderReid}}] \label{thm:BCR}
Suppose $p$, $q$, $r$, $p'$, $q'$, $r' \in \N_{\geq 2}$ and that $\frac{1}{p}+\frac{1}{q}+\frac{1}{r} < 1$ and $\frac{1}{p'}+\frac{1}{q'}+\frac{1}{r'} < 1$. Then the following are equivalent. 
\begin{enumerate}
    \item $\Gamma(p,q,r) \cong \Gamma(p',q',r')$.
    \item $\hut{\Gamma(p,q,r)} \cong \hut{\Gamma(p',q',r')}$.
    \item The multisets $\Menge{p,q,r}$ and $\Menge{p',q',r'}$ are equal.
\end{enumerate}
\end{thm}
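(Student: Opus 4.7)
The three implications decompose as (iii) $\Rightarrow$ (i) $\Rightarrow$ (ii) $\Rightarrow$ (iii). The first is immediate, since the presentations coincide after reordering generators, and the second follows from functoriality of profinite completion. The genuine content is (ii) $\Rightarrow$ (iii): the unordered triple $\Menge{p,q,r}$ must be recovered from the profinite completion $\hut{\Gamma(p,q,r)}$ alone.

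My plan is to extract the signature of the Fuchsian group from invariants of its profinite completion. The first ingredient is \emph{cohomological goodness} in the sense of Serre: cocompact Fuchsian groups are good, so the canonical map $\Gamma(p,q,r)\to \hut{\Gamma(p,q,r)}$ identifies each finite subgroup of $\Gamma(p,q,r)$ with a finite subgroup of the completion, and, conversely, every finite subgroup of $\hut{\Gamma(p,q,r)}$ is conjugate to the image of one. Under the hyperbolicity hypothesis $\tfrac{1}{p}+\tfrac{1}{q}+\tfrac{1}{r}<1$ the group $\Gamma(p,q,r)$ is genuinely Fuchsian and its maximal finite subgroups form exactly three conjugacy classes of cyclic groups, of orders $p$, $q$, $r$. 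Hence the multiset $\Menge{p,q,r}$ appears intrinsically as the multiset of orders of maximal finite subgroups of $\hut{\Gamma(p,q,r)}$, with multiplicities given by conjugacy-class counts in the profinite completion.

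To pin the multiplicities down I would combine this with an Euler-characteristic calculation. The rational Euler characteristic $\chi(\Gamma(p,q,r)) = -1+\tfrac{1}{p}+\tfrac{1}{q}+\tfrac{1}{r}$ is multiplicative on finite-index subgroups, and the set of indices of finite-index subgroups (and more refined data about them, such as their abelianizations) is a profinite invariant; for good groups $\chi$ itself is profinite. Equivalently, one may count epimorphisms $\Gamma \to Q$ onto chosen finite groups $Q$ such as alternating groups or $\PSL_2(\F_\ell)$, where the orders of elements representing the images of the generators force arithmetic constraints on $p,q,r$; these Hom-counts are visible to the profinite completion. Combined with the multiset of maximal torsion orders, the triple $\Menge{p,q,r}$ (together with the genus $0$) is uniquely determined.

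The main technical obstacle is the transfer of conjugacy classes of maximal finite subgroups between $\Gamma(p,q,r)$ and $\hut{\Gamma(p,q,r)}$, for this is what allows profinite information to see the \emph{number} of distinct orders $p$, $q$, $r$ and not only the set of divisors. Goodness together with the rigidity of signatures of hyperbolic Fuchsian groups is what makes this correspondence tight, and from here reading off $\Menge{p,q,r}$ becomes essentially an inspection. The hyperbolicity assumption is essential: in the Euclidean or spherical regime the group is virtually abelian or finite, the correspondence between maximal finite subgroups and signature entries degenerates, and a separate argument — of the kind used in the proof of \cref{thm:ProfRigHypRank3} above — would be required.
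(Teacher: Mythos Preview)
The paper does not prove this theorem at all: it is quoted verbatim as a result of Bridson--Conder--Reid and then \emph{used} in the proof of \cref{thm:analogueBCR}. There is therefore no ``paper's own proof'' to compare your sketch against.

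That said, your outline is broadly faithful to the actual argument in \cite[Section~8]{BridsonConderReid}: one shows that hyperbolic triangle (more generally, Fuchsian) groups are good in the sense of Serre, whence the torsion data and the rational Euler characteristic are visible in the profinite completion and together pin down the signature. A few points in your write-up would need tightening before it counts as a proof. First, the hypothesis allows some of $p,q,r$ to be $\infty$; then $\Gamma(p,q,r)$ is a free product of cyclics, hence virtually free rather than cocompact Fuchsian, so your appeal to ``cocompact Fuchsian groups are good'' does not literally cover this case (though virtually free groups are good too). Second, goodness tells you that every finite subgroup of $\hut{\Gamma}$ is conjugate into the image of $\Gamma$, but it does \emph{not} by itself prevent two non-conjugate finite subgroups of $\Gamma$ from becoming conjugate in $\hut{\Gamma}$; to read off multiplicities you need conjugacy separability (known for Fuchsian groups, but a separate input). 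Third, ``for good groups $\chi$ itself is profinite'' is not a general fact; for Fuchsian groups one extracts $\chi$ from the profinite completion via mod-$p$ cohomology or subgroup growth, and this step deserves an explicit reference rather than an assertion.
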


The previous theorem should not be understated as it yields a bijection between the `complete Coxeter graph' for hyperbolic von Dyck triangle groups and the groups themselves, while also showing that this geometric information --- which, by Gau\ss{}--Bonnet, yields the area of the fundamental hyperbolic triangle of the given group \cite[Theorem~1.4.2]{KatokFuchsian} --- is entirely encoded by finite quotients. It also raises the question of whether a similar statement holds for all triangle groups. Making use of results of Bridson--Conder--Reid~\cite{BridsonConderReid} and some geometric arguments we shall see that, in rank at most three, finite quotients do distinguish Coxeter systems. 

\begin{thm} \label{thm:ProfRigHypRank3}
Within $\mc{G}_{\leq 3}$ all Coxeter groups are profinitely rigid. That is, if $W_1$ and $W_2$ are Coxeter groups admitting Coxeter generating sets $S_1$ and $S_2$ such that $\CCG{W_1,S_1}$ and $\CCG{W_2,S_2}$ are vertices in $\mc{G}_{\leq 3}$, then $W_1 \cong W_2$ if and only if $\hut{W_1} \cong \hut{W_2}$.
\end{thm}

\begin{proof}
The implication $W_1\cong W_2 \implies \hut{W_1} \cong \hut{W_2}$ is immediate \cite[Proposition~2.2.4]{RibesZalesskii}.

For the converse we first rule out a trivial case: the profinite completion of a Coxeter group is finite if and only if the given group is itself finite. We henceforth assume that the groups considered are infinite.

Suppose $W_1$ and $W_2$ are both infinite and $2$-generated. Then $W_1 \cong D_\infty \cong W_2$ and the claim is trivially true in this case.

Now suppose $W_1$ is a triangle group, say $W_1 = \Delta(p,q,r)$, and that $W_2$ has a complete Coxeter graph contained in $\mc{G}_{\leq 3}$. We want to check that $\hut{\Delta(p,q,r)} \cong \hut{W_2}$ implies $\Delta(p,q,r) \cong W_2$. 
Let us first deal with the case where $W_2$ is generated by two elements. Being infinite, $W_2$ is the infinite dihedral group $\Z \rtimes C_2$. Thus $\hut{\Delta(p,q,r)} \cong \hut{W_2} \cong \hut{\Z} \rtimes C_2$, 
where the last isomorphism follows from \cite[Proposition~2.6]{GrunewaldZalesskiiGenus}. Thus $\Delta(p,q,r)$, being a subgroup of the semidirect product $\hut{\Z} \rtimes C_2$, must be virtually abelian since its (abelian, normal) subgroup $\hut{\Z} \cap \Delta(p,q,r)$ has finite index. As is known, cf. Proposition~1 in Chapter~VI, Paragraph~2 of \cite{BourbakiLie4-6}, the triangle group $\Delta(p,q,r)$ must thus be Euclidean, so that $\{p,q,r\}$ is one of the multisets $\Menge{2,3,6}$, $\Menge{2,4,4}$, $\{2,2,\infty\}$ or $\Menge{3,3,3}$. Neither $\Menge{2,4,4}$, $\Menge{2,2,\infty}$ nor $\Menge{3,3,3}$ is possible since the corresponding triangle groups have abelianizations not isomorphic to that of $W_2 = D_\infty$. Thus $\Delta(p,q,r) = \Delta(2,3,6)$. This is also impossible as $\hut{D_\infty} \cong \hut{\Z} \rtimes C_2$ contains no copy of $\hut{\Z}^2$, whereas $\hut{\Delta(2,3,6)}$ does. It follows that an infinite triangle group $\Delta(p,q,r)$ cannot have profinite completion isomorphic to that of a Coxeter group of rank two (or less).

Lastly, we deal with the case where $W_2$ is also a triangle group, say $W_2 = \Delta(p',q',r')$, which we assume to be infinite. Here substantially more work is required. 

Assume first that $W_1 = \Delta(p,q,r)$ is Euclidean, hence (again) virtually abelian. More precisely, $\Delta(p,q,r) \cong \Z^n \rtimes W_0$ for some $n \geq 1$ and some finite Coxeter group $W_0$; see \cite[Chapter~VI]{BourbakiLie4-6}. 
Applying \cite[Proposition~2.6]{GrunewaldZalesskiiGenus} again, 
\[
\hut{\Delta(p',q',r')} \cong \hut{\Z^n \rtimes W_0} \cong \hut{\Z}^n \rtimes W_0.
\] 
Arguing as in the dihedral case, $\Delta(p',q',r')$ must itself be virtually abelian, whence $\Delta(p',q',r')$ is Euclidean. The triples $\Menge{2,3,6}$, $\Menge{2,4,4}$, $\{2,2,\infty\}$ and $\Menge{3,3,3}$ yield abelianizations $C_2^2$, $C_2^3$, $C_2^3$, and $C_2$, respectively. Since groups with isomorphic profinite completions must have the same abelianizations by~\cite[Proposition~3.1]{ReidICM}, it follows that the multisets $\Menge{p,q,r}$ and $\Menge{p',q',r'}$ either agree or one has (up to reordering) $\Menge{p,q,r}=\Menge{2,4,4}$ and $\Menge{p',q',r'}=\Menge{2,2,\infty}$. But the latter situation cannot occur, as we now explain. The former group $\Delta(2,4,4)$ is the group of type $\til{\tB}_2$ isomorphic to $\Z^2 \rtimes W_0$, where $W_0$ is the Coxeter group $D_4$ of type $\tB_2 = \tI_2(4)$. The latter group, $\Delta(2,2,\infty)$, is isomorphic to $D_\infty \times C_2 \cong (\Z\rtimes C_2) \times C_2$. Again taking profinite completions via \cite[Proposition~2.6]{GrunewaldZalesskiiGenus}, we obtain  $\hut{\Z}^2 \rtimes D_4 \cong (\hut{\Z} \rtimes C_2) \times C_2$. A contradiction, since $(\hut{\Z} \rtimes C_2) \times C_2$ contains no elements of order four, whereas $\hut{\Z}^2 \rtimes D_4$ does. 

Suppose now that $\Delta(p,q,r)$ is hyperbolic. 
Here $W_2=\Delta(p',q',r')$ must also be hyperbolic, for otherwise it would be spherical or Euclidean, in which case its profinite completion would be finite or virtually abelian, respectively. Both possibilities would contradict the assumption that $\hut{\Delta(p,q,r)} \cong \hut{W_2}$ because $\Delta(p,q,r) \leq \hut{\Delta(p,q,r)}$ contains nonabelian free subgroups (e.g., by the Tits alternative). 

Given that $\hut{\Delta(p,q,r)} \cong \hut{\Delta(p',q',r')}$ with both ${\Delta(p,q,r)}$ and ${\Delta(p',q',r')}$ being hyperbolic, we now claim that $\infty \in \{p,q,r\}$ if and only if $\infty \in \{p',q',r'\}$. To check this we need a bit more information on the structure of $\Delta(p,q,r)$. Assume, without loss of generality, that $r = \infty$. Here, $\Delta(p,q,\infty)$ splits as an amalgamated free product; cf. \cref{thm:MihalikTschantzVisual}. More precisely, reading off the visual decomposition from the underlying complete Coxeter graph~\cite{MihalikTschantzVisual}, up to reordering the indices one has
\[
\Delta(p,q,\infty) \cong  
\begin{cases}
D_p \ast_{C_2} D_q, & \text{ if } p,q < \infty, \\
D_p \ast C_2, & \text{ if } p < \infty = q, \\
C_2 \ast C_2 \ast C_2 & \text{ otherwise},
\end{cases}
\]
where $C_n$ denotes the cyclic group of order $n$ and $D_m$ denotes the dihedral group of order $2m$. Thus $\Delta(p,q,\infty)$ is virtually free; cf. \cite[Section~2.6, Proposition~11]{SerreTrees}. Since $\Delta(p,q,\infty)$ is an amalgamated free product of finite groups, it follows that 
\[
\hut{\Delta(p,q,\infty)} \cong  
\begin{cases}
D_p \coprod_{C_2} D_q, & \text{ if } p,q < \infty, \\
D_p \coprod C_2, & \text{ if } p < \infty = q, \\
C_2 \coprod C_2 \coprod C_2 & \text{ otherwise},
\end{cases}
\]
where $\coprod$ denotes the (amalgamated) free product in the category of profinite groups; see \cite[Section~4]{GrunewaldZalesskiiGenus} or \cite[Section~9.2]{RibesZalesskii}. By a result of Dyer~\cite{DyerConjSep}, the (virtually free) group $\Delta(p,q,\infty)$ is also conjugacy separable. One can thus argue exactly as in the proof of \cite[Theorem~5.1]{BridsonConderReid} to conclude that any maximal finite subgroup of $\hut{\Delta(p,q,\infty)}$ is isomorphic to a maximal finite subgroup of ${\Delta(p,q,\infty)}$. By a theorem of Tits (cf. \cite[Theorem~12.3.4(i)]{DavisBook}), these are precisely the maximal spherical parabolic subgroups: $D_p$ and $D_q$, or $D_p$ and $C_2$, or only $C_2$, whichever applicable. 

Our current assumption is that $\hut{\Delta(p,q,\infty)} \cong \hut{\Delta(p',q',r')}$ with $\{p,q,\infty\}$ and $\{p',q',r'\}$ both being hyperbolic triples, and we want to prove that at least one of $p'$, $q'$, $r'$ is also infinite. Suppose, on the contrary, that $p',q',r' < \infty$. By the previous paragraph, we have that each (maximal) spherical parabolic subgroup $D_{p'}$, $D_{q'}$, $D_{r'}$ of $\Delta(p',q',r')$ is isomorphic to a subgroup of a maximal spherical parabolic subgroup of $\Delta(p,q,\infty)$. Hence each of $p'$, $q'$, $r'$ divides some (finite) element of $\{p,q\}$ or $2$, according to the cases applicable for $\Delta(p,q,\infty)$. On the other hand, since the corresponding von Dyck subgroup $\Gamma(m,n,l) \leq \Delta(m,n,l)$ has index two, it follows from \cite[Corollary~3.3 and Proposition~3.5]{BridsonConderReid} and from the multiplicativity of $L^2$-Betti numbers (\cite[Proposition~J.5.1]{DavisBook}) that the rational Euler characteristics of $\Gamma(p,q,\infty)$ and of $\Gamma(p',q',r')$ must be equal. However, a straightforward comparison of Euler characteristics yields 
\[ \frac{1}{p'} + \frac{1}{q'} + \frac{1}{r'} = 
\begin{cases}
\frac{1}{p} + \frac{1}{q}, & \text{ if } p,q < \infty, \\
\frac{1}{p}, & \text{ if } p < \infty = q, \\
0 & \text{ otherwise},
\end{cases}
 \]
leading to contradictions in all three cases; we refer the reader, e.g., to \cite[page~319]{LiebeckShalevFuchsian} or \cite[page~664]{HughesFuchsian} for an explicit formula of the Euler characteristic of such groups in both the noncocompact and cocompact cases. (See also \cite[Section~2.9]{SerreTrees}.) In conclusion, at least one of $p'$, $q'$, $r'$ must also be infinite, as claimed. 

Thus, given $\hut{\Delta(p,q,r)} \cong \hut{\Delta(p',q',r')}$ with the triples $\{p,q,r\}$ and $\{p',q',r'\}$ being hyperbolic, there are two cases to consider. Up to reordering labels, either $r = \infty = r'$, or all of the values $p$, $q$, $r$, $p'$, $q'$, $r'$ are finite. In the former case we argue as in the previous paragraphs to deduce that $\Delta(p,q,\infty)$ and $\Delta(p',q',\infty)$ are both virtually free, in which case their maximal spherical parabolic subgroups agree with the maximal finite subgroups of the completion $\hut{\Delta(p,q,\infty)} \cong \hut{\Delta(p',q',\infty)}$. This implies that $\{D_p, D_q\} = \{D_{p'}, D_{q'}\}$ and thus, up to reordering labels, $p = p'$ and $q=q'$, yielding $\Delta(p,q,\infty) \cong \Delta(p',q',\infty)$. 

Now assume that $p$, $q$, $r$, $p'$, $q'$, $r'$ are all finite, so that $W_1 = \Delta(p,q,r)$ and $W_2 = \Delta(p',q',r')$ act cocompactly on the hyperbolic plane. Write $\phee : \hut{W_1} \to \hut{W_2}$ for the assumed isomorphism between the profinite completions. For simplicity, write $\Gamma_1 := \Gamma(p,q,r) \leq W_1$ and $\Gamma_2 := \Gamma(p',q',r') \leq W_2$. We shall prove that $\{p,q,r\} = \{p',q',r'\}$ again by using the corresponding von Dyck subgroups $\Gamma_1$ and $\Gamma_2$. Consider the subgroup $\Gamma_0 := \phee(\quer{\Gamma_1}) \cap W_2 \leq W_2$, where $\quer{X}$ denotes the (topological) closure of $X$. We argue that $\Gamma_0 \cong \Gamma_2$. Indeed, by \cite[Corollaries~2.8 and~2.9]{BridsonConderReid} one has 
\[ [W_2 : \Gamma_0] = 2 \quad \text{ and } \quad \hut{\Gamma_0} \cong \hut{\Gamma_1}.\] 
Now, by direct computational methods (e.g., \cite{ConderDobcsanyi}) or straightforward geometric arguments with hyperbolic polygons (cf. \cite[Theorem~3]{PFPHyperbolic}), one can list from the finite presentation 
\[ W_2 = \Delta(p',q',r') = \spans{\alpha,\beta,\gamma \mid \alpha^2, \beta^2, \gamma^2, (\alpha \beta)^{p'}, (\beta \gamma)^{q'}, (\gamma \alpha)^{r'}} \]
all the (possible) subgroups of index two in $W_2$. Besides the von Dyck triangle subgroup $\Gamma_2 = \Gamma(p',q',r')$, which is generated by the rotations $\alpha \beta$, $\beta \gamma$, and $\gamma \alpha$ \cite[Section~4.4]{KatokFuchsian}, the other possible subgroups of index $2$ in $W_2$ are given as follows.
\begin{align} \label{eq:listsbgps}
\begin{split}
\spans{\beta,\gamma,\alpha\beta\alpha,\alpha\gamma\alpha}, \quad \spans{\alpha,\gamma,\beta\alpha\beta,\beta\gamma\beta}, \quad \spans{\alpha,\beta,\gamma\alpha\beta,\gamma\beta\gamma}, \\
\spans{\alpha,\beta\alpha\beta,\beta\gamma}, \quad \spans{\beta,\alpha\beta\alpha,\gamma\alpha}, \, \text{ and } \, \spans{\gamma,\beta\gamma\beta,\alpha\beta};
\end{split}
\end{align}
compare \cite[Table~2]{PFPHyperbolic}. Thus $\Gamma_0$ is isomorphic to $\Gamma_2$ or to one of the groups from the list~\eqref{eq:listsbgps}. By \cite[Theorem~2.4.1]{KatokFuchsian}, the finite subgroups of (the Fuchsian) group $\Gamma_1 = \Gamma(p,q,r)$ are all cyclic. Applying \cite[Theorem~5.1]{BridsonConderReid} to $\Gamma_1$, the same holds for its completion $\hut{\Gamma_1}$. Since $\hut{\Gamma_1} \cong \hut{\Gamma_0} \supset \Gamma_0$, any finite subgroup of $\Gamma_0$ must also be cyclic. However, as can be seen by direct inspection, the (possible) groups contained in the list~\eqref{eq:listsbgps} all contain noncyclic finite subgroups (namely, dihedral groups). Thus, the only possibility for $\Gamma_0$ is to coincide with $\Gamma_2$, as desired. This being the case, we have that $\hut{\Gamma_1} \cong \hut{\Gamma_0} \cong \hut{\Gamma_2}$. Here, we can apply \cref{thm:BCR} to conclude that $\{p,q,r\} = \{p',q',r'\}$ and we are done.
\end{proof}

We stress that our \cref{thm:ProfRigHypRank3} also takes into account triangle groups with cusps --- i.e., those whose Dirichlet fundamental domain contains vertices at infinity. In contrast, Bridson--Conder--Reid work with cocompact von Dyck triangle groups in \cite[Section~8]{BridsonConderReid}. The following result gives a direct analogue of their \cref{thm:BCR} in the Coxeter setting while also allowing for cusps.

\begin{thm} \label{thm:analogueBCR}
Let $\Menge{p,q,r}$ and $\Menge{p',q',r'}$ be multisets of elements in $\N_{\geq 2} \cup \Menge{\infty}$. Then the following are equivalent. 
\begin{enumerate}
    \item $\Delta(p,q,r) \cong \Delta(p',q',r')$.
    \item $\hut{\Delta(p,q,r)} \cong \hut{\Delta(p',q',r')}$.
    \item $\Menge{p,q,r} = \Menge{p',q',r'}$.
\end{enumerate}
\end{thm}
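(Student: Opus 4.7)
The implications (iii) $\Rightarrow$ (i) and (i) $\Rightarrow$ (ii) are immediate, the former by reading off the defining presentations and the latter by functoriality of profinite completion. For the decisive direction (ii) $\Rightarrow$ (iii) the plan is to first apply \cref{thm:ProfRigHypRank3} to pass from profinite isomorphism to abstract isomorphism $\Delta(p,q,r)\cong\Delta(p',q',r')$, and then to establish (i) $\Rightarrow$ (iii) by splitting into cases according to the geometry of the triangle group.

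In the spherical case the multiset belongs to the finite list $\{2,2,n\}$, $\{2,3,3\}$, $\{2,3,4\}$, $\{2,3,5\}$, yielding the groups $D_n\times C_2$, $S_4$, $S_4\times C_2$ and $A_5\times C_2$. An elementary pairwise comparison settles this case: most pairs are separated by group order, and the coincidences at $4n\in\{24,48,120\}$ by inspecting centers and maximal orders of elements. In the Euclidean case I would reuse computations already performed in the proof of \cref{thm:ProfRigHypRank3}: the four Euclidean triples $\{3,3,3\}$, $\{2,4,4\}$, $\{2,3,6\}$, $\{2,2,\infty\}$ have abelianizations $C_2$, $C_2^3$, $C_2^2$, $C_2^3$ respectively, and the two groups sharing abelianization $C_2^3$ are told apart by the presence of an element of order four in $\Delta(2,4,4)\cong\Z^2\rtimes D_4$ but not in $\Delta(2,2,\infty)\cong D_\infty\times C_2$. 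A Euclidean triangle group and a hyperbolic one cannot be isomorphic, since the former is virtually abelian while the latter is not, so these three geometric cases are exhaustive.

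The principal case is the hyperbolic one, where the strategy is to identify the rotation subgroups $\Gamma(p,q,r)\leq\Delta(p,q,r)$ of index two and then invoke the Bridson--Conder--Reid rigidity theorem (\cref{thm:BCR}). The main obstacle is that $\Gamma(p,q,r)$ is defined as the kernel of the sign map sending every Coxeter generator to $-1$, and this homomorphism is a priori not intrinsic to the abstract group $\Delta(p,q,r)$. When at least two of $p,q,r$ are odd integers $\geq 3$, the abelianization $\Delta^{ab}$ is cyclic of order two, so $\Gamma(p,q,r)$ coincides with $[\Delta(p,q,r),\Delta(p,q,r)]$ and is characteristic, hence preserved by any abstract isomorphism. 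In the remaining parity profiles, where $\Delta^{ab}\in\{C_2^2, C_2^3\}$, multiple index-two subgroups exist and one must argue more delicately. The plan is to recycle the index-two extension classification of Bridson--McReynolds--Reid--Spitler already used in the proof of \cref{thm:ProfRigHypRank3}: their Proposition~3.1 and Lemma~4.3 single out the reflection (triangle Coxeter) extension among the four isomorphism types of index-two extensions of a hyperbolic von Dyck group via its profinite completion. The given abstract isomorphism $\Delta(p,q,r)\cong\Delta(p',q',r')$ realises this common group simultaneously as the reflection extension of $\Gamma(p,q,r)$ and of $\Gamma(p',q',r')$; comparing extension classes then forces $\Gamma(p,q,r)\cong\Gamma(p',q',r')$ as abstract groups, after which \cref{thm:BCR} delivers $\{p,q,r\}=\{p',q',r'\}$.
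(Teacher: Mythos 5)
Your overall architecture — reducing (ii) $\Rightarrow$ (iii) to (i) $\Rightarrow$ (iii) via \cref{thm:ProfRigHypRank3} and then splitting by the geometry of the triangle — matches the paper, and your spherical and Euclidean cases are correct (the Euclidean computation is literally the one from the proof of \cref{thm:ProfRigHypRank3}). The problem is the hyperbolic case, which is where all the work lies, and there your argument has a genuine gap at the sentence ``comparing extension classes then forces $\Gamma(p,q,r)\cong\Gamma(p',q',r')$.'' The results of Bridson--McReynolds--Reid--Spitler that you cite classify the index-two extensions of a \emph{fixed} von Dyck group $\Gamma$ and show that those four extensions have pairwise non-isomorphic profinite completions. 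They let you compare two extensions of the \emph{same} $\Gamma$ (which is exactly how they are used in the proof of \cref{thm:ProfRigHypRank3}); they say nothing about whether the rotation subgroup is characteristic in $\Delta(p,q,r)$, nor about whether an abstract isomorphism $\Delta(p,q,r)\to\Delta(p',q',r')$ must carry $\Gamma(p,q,r)$ onto $\Gamma(p',q',r')$. In the parity profiles you flag as problematic, $\Delta^{\mathrm{ab}}\cong C_2^2$ or $C_2^3$, so the common group $W$ has three or seven index-two subgroups, only one of which is a von Dyck group; a priori the images of the two rotation subgroups could be two \emph{different} index-two subgroups of $W$, and no contradiction with the cited lemmas arises even if those subgroups are non-isomorphic. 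You have correctly identified the obstacle but the proposed resolution does not overcome it. A genuine fix would require an independent reason why the rotation subgroup is intrinsic — e.g.\ uniqueness up to conjugacy of the discrete faithful finite-covolume representation of a triangle group (triviality of its Teichm\"uller space) — which is not among the tools you invoke.

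It is worth noting that the paper's proof sidesteps this issue entirely, and that the case it works hardest on sits exactly in the regime where your argument breaks. For triples with no $\infty$-label the paper quotes Charney--Davis strong rigidity (such groups are isolated vertices of $\mc{G}$, so the multiset is determined outright). For $r=\infty=r'$ it first uses abelianizations to control parities; when both $p,q$ are odd the abelianization is $C_2$, the von Dyck subgroup is the unique index-two subgroup, and \cref{thm:BCR} applies — this is the one subcase where your characteristic-subgroup argument is valid. The residual case $\Delta(4k+2,2l+1,\infty)$ versus $\Delta(4l+2,2k+1,\infty)$ has abelianization $C_2^2$ and is settled not by profinite methods but by the basic matching theorem (\cref{thm:basicmatchingthm}), a blow-up to rank four, and a Bass--Serre comparison of maximal finite subgroups in two amalgam decompositions. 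Some replacement for that analysis is needed wherever your extension-class step is invoked.
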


\begin{proof}
The implications (iii) $\implies$ (i) $\implies$ (ii) are immediate. The claim (ii) $\implies$ (i) is a special case of \cref{thm:ProfRigHypRank3}. So we need only prove that (i) implies (iii).

For the finite triangle groups (of which there are only finitely many), the equality $\Menge{p,q,r} = \Menge{p',q',r'}$ follows by direct inspection of their Coxeter--Dynkin diagrams after Coxeter's classification theorem. If $\Delta(p,q,r)$ (or $\Delta(p',q',r')$) act geometrically on $\R^2$ or $\Hyp^2$, a theorem of Charney--Davis \cite{CharneyDavis} shows that $\Delta(p,q,r)$ (or $\Delta(p',q',r')$) is an isolated point in $\mc{G}$. Hence $\Menge{p,q,r} = \Menge{p',q',r'}$. The result of Charney--Davis is applicable to all (infinite) triangle groups without an $\infty$-labeled edge. 

We may henceforth assume that $r = \infty = r'$. Since the only Euclidean triple containing $\infty$ is $\Menge{2,2,\infty}$, we may further assume that the given triangle groups are hyperbolic. As in the proof of \cref{thm:ProfRigHypRank3} we may resort to an Euler characteristic argument to clear the remaining cases. Indeed, the assumed isomorphism $\Delta(p,q,\infty) \cong \Delta(p',q',\infty)$ yields 
\[\chi_\Q(\Gamma(p,q,\infty)) = \frac{1}{2}\cdot\chi_\Q(\Delta(p,q,\infty)) = \frac{1}{2}\cdot\chi_\Q(\Delta(p',q',\infty)) = \chi_\Q(\Gamma(p',q',\infty)).\]
Hence (cf. \cite[page~319]{LiebeckShalevFuchsian} or \cite[page~664]{HughesFuchsian} for a formula) we obtain 
\begin{equation} \label{eq:Euler426}
\frac{1}{p} + \frac{1}{q} = \frac{1}{p'} + \frac{1}{q'} < 1,
\end{equation}
where $\frac{1}{\infty}$ is read off as $0$. Moreover, using Tits' theorem \cite[Theorem~12.3.4(i)]{DavisBook} to compare the (possible) maximal spherical parabolic subgroups of $\Delta(p,q,\infty)$ and $\Delta(p',q',\infty)$ --- which are of order $2$ or dihedral of orders in $\{2p,2q,2p',2q'\}$, depending on the given labels --- we also have the requirement 
\begin{align} \label{eq:sphericalparabolics426}
\begin{split}
\infty > x \in \{p,q\} \implies \exists x' \in \{2,p',q'\} \text{ such that } x \text{ divides } x', \quad \text{ and} \\
\infty > x' \in \{p',q'\} \implies \exists x \in \{2,p,q\} \text{ such that } x' \text{ divides } x.
\end{split}
\end{align}
Exhausting all possibilities in Condition~\eqref{eq:sphericalparabolics426} and substituting values in \cref{eq:Euler426}, one readily checks that $\{p,q\} = \{p',q'\}$, which finishes off the proof.
\end{proof}

\begin{cor}[{A complete picture of $\mc{G}_{\leq 3}$}] \label{cor:Rank3is1D}
The layers $\mc{G}_{\leq 3}$ form a disconnected $1$-dimensional subcomplex of the Coxeter galaxy and of its vertical core. More precisely, $\mc{G}_{\leq 3} = \VC_{\leq 3}$ and the following hold.
 \begin{enumerate}
     \item \label{Rank3case1} The vertices from $\mc{G}_{\leq 2}$ are isolated in $\mc{G}_{\leq 3}$, except for the dihedral groups $\tI_2(4k+2)$ with $k \geq 1$. Each such dihedral group has exactly one neighbor, namely the (spherical) triangle group $\Delta(2k+1,2,2)$. 
     \item All vertices in $\mc{G}_{\leq 3}$ distinct from the above exceptions are also isolated.
 \end{enumerate}
\end{cor}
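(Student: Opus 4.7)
The plan is to enumerate all edges of $\mc{G}_{\leq 3}$ explicitly by combining three ingredients: the vertical navigation results (\cref{cor:navigatingverticallayers,cor:heightisbounded,cor:componentsdonotjump}) to control edges between layers, the pseudo-transposition machinery of \cref{def:pseudotransp} and \cref{lem:blowingup} to identify blow-up edges, and the triangle-group rigidity statement \cref{thm:analogueBCR} to rule out horizontal edges within layer three.

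The unique vertex in $\mc{G}_1$ is $\CCG{C_2}$, whose component is forced to lie in $\mc{G}_{\leq 1}$ by \cref{cor:heightisbounded}; it is therefore isolated in $\mc{G}$. For a dihedral vertex $\CCG{\tI_2(m)} \in \mc{G}_2$, I specialize \cref{def:pseudotransp} to the rank-two case: condition (ii) forces $J = S$ (the alternative $J \subsetneq S$ would require $C_2$ to be of type $\tB_k$ with odd $k$ or of type $\tI_2(2k)$ with odd $k>2$, both impossible), and therefore a pseudo-transposition exists precisely when $m = 2k$ for some odd $k > 2$, equivalently $m = 4j+2$ for some $j \geq 1$. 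Plugging $S = J = \{t,v\}$ into \cref{lem:blowingup} yields a three-generator system with $m'(tvt,v) = (4j+2)/2 = 2j+1$ and $m'(w_J,tvt) = m'(w_J,v) = 2$; the resulting layer-three vertex is thus exactly $\CCG{\Delta(2j+1,2,2)}$. \cref{cor:navigatingverticallayers} then guarantees that every vertical edge from $\mc{G}_2$ upward arises in this way, while \cref{cor:componentsdonotjump} forbids direct edges from $\mc{G}_1$ to $\mc{G}_3$. Within $\mc{G}_2$ itself, distinct dihedral groups are pairwise non-isomorphic (by comparing orders or abelianizations), so no horizontal edges occur.

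For layer three, \cref{thm:analogueBCR} gives $\Delta(p,q,r) \cong \Delta(p',q',r')$ if and only if the two defining multisets agree, which is precisely the condition for the two vertices of $\mc{G}_3$ to coincide. Hence $\mc{G}_3$ has no horizontal edges, and vertical edges downward from $\mc{G}_3$ must, by \cref{thm:downwardsnavigation}, be the inverses of blow-ups already identified. Consequently, the edge set of $\mc{G}_{\leq 3}$ is the matching $\{\{\tI_2(4j+2),\Delta(2j+1,2,2)\} : j \geq 1\}$ with all remaining vertices isolated; in particular, $\mc{G}_{\leq 3}$ is disconnected because $\CCG{C_2}$ is isolated. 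Each such edge is vertical of height one and therefore survives in the vertical core; together with the equality of vertex sets from \cref{obs:GalaxyandVChavesamevertices}, this yields $\mc{G}_{\leq 3} = \VC_{\leq 3}$. A matching contains no triangle, so no $2$-simplex can occur and the subcomplex is one-dimensional. The substantive input is \cref{thm:analogueBCR}: without it one would need an ad hoc case analysis to preclude unexpected identifications among triangle groups and between dihedral and triangle groups. The label bookkeeping inside \cref{lem:blowingup} is routine but must be carried out carefully, so that the output is correctly identified as $\Delta(2j+1,2,2)$ rather than some relabeled variant.
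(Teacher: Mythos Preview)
Your argument is correct and follows essentially the same route as the paper: identify the blow-up edges via \cref{def:pseudotransp} and \cref{lem:blowingup}, then use \cref{thm:analogueBCR} to exclude horizontal edges in layer three. One small difference worth noting: the paper invokes \cref{thm:ProfRigHypRank3} specifically to show that $D_\infty$ is isolated, whereas you handle $D_\infty$ more elementarily by observing it admits no pseudo-transposition and appealing to \cref{thm:navigatingverticallayers}. Your route is cleaner here. One minor imprecision: the sentence ``\cref{cor:navigatingverticallayers} then guarantees that every vertical edge from $\mc{G}_2$ upward arises in this way'' overstates what that corollary says on its own (it allows a trailing horizontal edge in the target layer); the claim only becomes true once you have also established that $\mc{G}_3$ has no horizontal edges, which you do in the next paragraph. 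Reordering or rephrasing would tighten this.
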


\begin{proof}
The edges arise from pseudo-transpositions applied to certain dihedral groups in $\mc{G}_2$. The relevant groups are, by~\cref{lem:blowingup}, exactly those listed in~\eqref{Rank3case1}.  
Apply \cref{thm:ProfRigHypRank3} to conclude that $D_\infty$ is also an isolated point. The remaining cases of (infinite) triangle groups follow immediately from \cref{thm:analogueBCR}.
\end{proof}

\begin{cor}\label{cor:SolutionIsoProbRk3}
The isomorphism problem is decidable for groups in the layers ${\mc{G}_{\leq 3}}$.
\end{cor}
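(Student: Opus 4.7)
The plan is to turn the complete description of $\mc{G}_{\leq 3}$ provided by \cref{cor:Rank3is1D} into a terminating algorithm. Given two complete Coxeter graphs $\CCGo_1, \CCGo_2 \in \mc{G}_{\leq 3}$ as input, I would first read off the ranks $k_i \in \{1,2,3\}$ together with the multisets of edge labels. This is a finite amount of data, and each subsequent decision reduces to a finite inspection of those labels.

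If $k_1 = k_2$, the analysis splits according to the common rank. In rank one both graphs represent $C_2$. In rank two they correspond to dihedral groups $\tI_2(m_i)$, and one simply compares the labels $m_1$ and $m_2$. In rank three both graphs represent triangle groups, and \cref{thm:analogueBCR} reduces the isomorphism test to equality of their edge-label multisets. If $k_1 \neq k_2$, the only mixed case that can yield adjacent vertices is $\{k_1,k_2\} = \{2,3\}$; by \cref{cor:Rank3is1D} this occurs precisely when the rank-two graph equals $\tI_2(4k+2)$ and the rank-three graph equals $\Delta(2k+1,2,2)$ for some common $k \geq 1$. The algorithm therefore tests for this pattern, reading off the parameter $k$ from the dihedral label and verifying that the triangle labels are $\{2k+1,2,2\}$; otherwise the two inputs lie in distinct components of $\mc{G}_{\leq 3}$.

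The main obstacle is essentially absent at this stage: the classification of components in \cref{cor:Rank3is1D}, which in turn rests on the profinite rigidity supplied by \cref{thm:ProfRigHypRank3} and the multiset criterion of \cref{thm:analogueBCR}, has already done the conceptual work. Only a bounded case distinction on the edge labels of the inputs remains, and it evidently terminates, which suffices to establish decidability on $\mc{G}_{\leq 3}$.
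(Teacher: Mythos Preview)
Your argument is correct and follows essentially the same route as the paper: both deduce decidability directly from \cref{cor:Rank3is1D}, with the paper noting in one line that connected components are determined by the edge-label data, while you spell out the resulting case analysis explicitly. No gap.
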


\begin{proof}
Straightforward from \cref{cor:Rank3is1D} as the connected components of vertices in $\mc{G}_{\leq 3}$ are completely determined by the multiset of edge labels of their underlying complete Coxeter graphs.
\end{proof}

To the best of our knowledge, \cref{cor:SolutionIsoProbRk3} has not explicitly appeared in the literature before, though it was pointed out to us by Piotr~Przytycki that it can be deduced from the works of Caprace--M\"uhlherr~\cite{CapraceMuehlherr2-Spherical} and Weigel~\cite{WeigelTwistTriangle}. Alternatively, it also follows from Ratcliffe--Tschantz's solution to the twist conjecture for chordal Coxeter groups \cite{RatcliffeTschantz}. We remark, however, that both such solutions have an intermediate step shown in an unpublished preprint \cite{HowlettMuehlherr} by Howlett--M\"uhlherr; see also \cite[Appendix~B]{CapracePrzytyckiTwistRigid}.


\section{History of the problem, classical results and conjectures}
\label{sec:History}

In several subclasses of Coxeter groups the isomorphism problem is known to be decidable. And there exist ways to reduce the full problem to a subproblem about saturated, angle-compatible Coxeter systems. A key tool in these reductions are certain types of horizontal moves (edges) within the Coxeter galaxy. 
We describe these horizontal moves in \cref{subsec:horizontalmoves}. In \cref{subsec:twistConj} we explain how the twist conjecture provides a way to solve the isomorphism problem. Finally we summarize other approaches and known solutions to the isomorphism problem within certain subclasses of Coxeter groups in \cref{subsec:survey}.


\subsection{Horizontally navigating within a layer}
\label{subsec:horizontalmoves}

In this section we describe the known types of horizontal moves: twists, cross-eyed twists and $J$-deformations.

\subsubsection{Twisting the diagram}\label{subsec:twists}
Twists were discovered by M\"uhlherr by means of an example in \cite{BernhardExample}, and later formally introduced in \cite{BMMN}. The definition below is essentially the one provided in \cite{CapracePrzytyckiTwistRigid}. Compare also \cref{rem:trivialTwist}. 
Given a subset $J\subseteq S$ of a generating set $S$ of a Coxeter system, recall that $J^\perp$ consists of those elements in $S\setminus J$ that commute with all elements in $J$.

\begin{dfn}[Diagram twist]
\label{def:twist}
Let $(W,S)$ be a Coxeter system and $\CCGo$ its complete Coxeter graph. 
Then $\CCGo$ \emph{admits a twist} if there exist subsets $A,B,J$ of $S$ as follows.  
The set $J\subseteq S$ generates an irreducible, spherical Coxeter group. Denote by $w_J$ the longest element in that group. The subsets $A$, $B \subseteq S$ are disjoint and such that 
\begin{enumerate}
    \item $S\setminus(J\cup J^\perp) = A\cup B$, and
    \item all edges between $A$ and $B$ have label $\infty$. 
\end{enumerate}
We then define the \emph{twist of $B$ along $J$} to be the map $\tau: S\to W$ where 
\begin{equation*}
\tau(s)=\begin{cases}
          s \quad &\text{if } \, s \in A, \\
          w_Jsw_J \quad &\text{otherwise}.
     \end{cases}
\end{equation*}
The image $\tau(S)$  is called \emph{twisted generating set} of $W$. 
The \emph{twisted complete Coxeter graph} $\CCGo'$ is the complete edge-labeled graph with vertices $\tau(S)$ and, for all $s\neq t$ in $S$, the edge in $\CCGo'$ between $\tau(s)$ and $\tau(t)$  has label $\ord{\tau(s),\tau(t)}=\mathrm{ord}(\tau(s)\tau(t))$.
\end{dfn}

A twist depends both on the choice of $J$ and on the choice of $A$ and $B$. 
For a given $J$ there may be several choices for $A$ and $B$ in $S\setminus (J\cup J^\perp)$. Every twist is invertible in the sense that we get back the original Coxeter generating set and complete diagram when we twist $B^{w_J}$ along the same set $J$.  

It was shown in \cite[Theorem 4.5]{BMMN} that the pair $(W,S')$, with $S'$ a twisted generating set as above in \cref{def:twist}, is also a Coxeter generating set of $W$. Hence the twisted Coxeter generating set and twisted complete Coxeter graph deserve their names.

\begin{remark}[Discussion of the definition of twists]
\label{rem:trivialTwist}
In the definition of twist provided in \cite{CapracePrzytyckiTwistRigid} (called `elementary twist' there), the possibility that $A\cap B$ is nonempty or that either $A$ or $B$ is empty was not excluded. 

However, the fact that we assume $A\cup B$ to be a disjoint union in \cref{def:twist} is not an important restriction, and is also done, for instance, in M\"uhlherr's definition of twist in the survey~\cite{BernhardSurvey} and in \cite[p.~2083]{HuangPrzytycki}. Suppose that $A\cap B\neq \leer$. In this case the map $\tau$ is the identity on all of $A$ (including those elements that are also in $B$) and equals conjugation by $w_J$ on $B\setminus A$. We may hence replace $B$ by $B\setminus A$ and obtain a disjoint union without changing the map $\tau$.

The definition in \cite{BMMN} coincides with the definition in \cite{CapracePrzytyckiTwistRigid} if one chooses $V=J$ and $U=B$. Similarly, the definition in~\cite{BernhardSurvey} coincides with \cref{def:twist} by choosing $K = A$ (and thus $L = B$).
\end{remark}

In order to facilitate drawing the twisted graph for $W$ with respect to the twisted Coxeter system we will now explicitly state how to obtain $\CCGo'$ from $\CCGo$. This might also explain why we call it a \emph{twist of $B$}.  

\begin{remark}[Drawing the twisted Coxeter graph]
Let $(W,S)$, the graph $\CCGo$, a subset $J\subseteq S$ and subsets $A,B \subseteq S$ be given as in \cref{def:twist}. 
Recall that the set $S$ of vertices in $\CCGo$ is the following disjoint union: 
\[
S= A\cup B\cup J\cup J^\perp.  
\]
Check with \cref{def:twist} that $\tau(A\cup J\cup J^\perp)=A\cup J\cup J^\perp$ as $\tau$ restricted to $A$ or $J^\perp$ is the identity and permutes $J$. 
Thus set-wise we have the equality 
\[
\tau(S)= A\cup \tau(B)\cup \tau(J)\cup J^\perp = A \cup B^{w_J} \cup J\cup J^\perp,  
\]
where $B^{w_J}$ denotes conjugation of $B$ by $w_J$. The graph $\CCGo'$ is then obtained as follows. 
Remove from $\CCGo$ all vertices $b \in B$ and with them all edges connecting $b$ to other elements. 
Then add for each $b\in B$ a new vertex $\tau(b)=b^{w_J}$. Also add, for each $b\in B$ and $s\in \tau(S)$, an edge between $\tau(b)=b^{w_J}$ and $s$ with label 
\begin{equation*}
\ord{\tau(b),s}=\begin{cases}
          \ord{b,b'} \quad &\text{if } \, s=\tau(b') \text{ for some } b'\in B, \\
          \infty    \quad &\text{if } \, s=\tau(a)=a \text{ for some } a\in A, \\
          \ord{b,j}  \quad &\text{if } \, s=\tau(j) \text{ for some } j\in J,\\
          \ord{b,i}  \quad &\text{if } \, s=i=\tau(i) \text{ for some } i\in J^\perp.
     \end{cases}
\end{equation*}
Short computations yield that this agrees with \cref{def:twist}. 
The graph $\CCGo'$ hence differs from $\CCGo$ only in the vertices $B$ and edges connecting $B^{w_J}$ to permuted vertices within $J$.
\end{remark}

The obvious question at hand is whether a twisted diagram $\CCGo'$ always differs from $\CCGo$. The answer is no. If, for example, the element $w_J$ is central in $W_J$, then $\CCGo'$ and $\CCGo$ are isomorphic.  This is in particular the case if $\vert J\vert=2$ and the corresponding parabolic subgroup is of classical type $\tI_2(2k)$ for some $k\geq 1$. 
Moreover, if $J$ contains only a single element the graphs $\CCGo'$ and $\CCGo$ are canonically isomorphic. This motivates the following definition. The term nontrivial twist was defined implicitly in Remark 4.7 of \cite{BMMN}. 

\begin{dfn}[Nontrivial twists]
\label{def:nte twist}
A twist for $\CCGo$ is \emph{nontrivial} if $\CCGo$ and the twisted diagram $\CCGo'$ are not isomorphic as edge-labeled graphs. It is \emph{trivial} otherwise. 
\end{dfn}

We now examine which twists are nontrivial. 

\begin{lem}
Let $(W,S)$ be a Coxeter system with complete Coxeter graph $\CCGo$. Let $\tau$ be a twist of $B$ along $J$ as in \cref{def:twist}. Then $\tau$ is trivial if one of the following holds: 
\begin{enumerate}
    \item $w_J$ is central in $W_J=\langle J\rangle$,
    \item $A=\leer$, or 
    \item $B=\leer$.
\end{enumerate}
\end{lem}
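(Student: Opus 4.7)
My plan is to handle each of the four cases separately by exhibiting an explicit bijection $\phi \colon S \to \tau(S)$ that induces a label-preserving isomorphism between $\CCGo$ and $\CCGo'$. The common thread is to use the explicit formulas for the edge labels of $\CCGo'$ recorded in the remark immediately preceding the lemma.

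Case (iv) is immediate: if $B = \leer$, then by \cref{def:twist} the map $\tau$ is the identity on $S$, so $\CCGo' = \CCGo$ tautologically. For case (iii), when $A = \leer$, I would observe that $\tau$ coincides on all of $S$ with conjugation by $w_J$: by construction this holds on $B \cup J$, and it also holds on $J^\perp$ because elements of $J^\perp$ commute with every element of $J$ and hence with $w_J$. Since conjugation by $w_J$ is an inner automorphism of $W$, it preserves orders of products, giving $\mathrm{ord}(\tau(s)\tau(t)) = \mathrm{ord}(w_J (st) w_J^{-1}) = \mst$. Thus the restriction of this inner automorphism provides the desired isomorphism $\phi$.

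The central case is (ii). When $w_J$ is central in $W_J$, the involution property $w_J^2 = 1$ (valid for the longest element of any spherical Coxeter group) yields $w_J s w_J = s$ for every $s \in J$. Combined with the fact that $J^\perp$ commutes with $w_J$, I conclude that $\tau$ fixes $A \cup J \cup J^\perp$ pointwise and only moves elements of $B$, sending each $b$ to $w_J b w_J$. I would then define $\phi \colon S \to \tau(S)$ to be the identity on $A \cup J \cup J^\perp$ and to agree with $\tau$ on $B$, and verify label preservation using the remark's formulas: edges with both endpoints in $A \cup J \cup J^\perp$ keep the same label trivially; for $b, b' \in B$, one has $\mathrm{ord}(w_J b b' w_J) = \mathrm{ord}(bb') = m_{b,b'}$; and the mixed edges (one endpoint in $B$, the other in $A$, $J$, or $J^\perp$) are handled by the four cases of the remark's formula. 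Case (i), $\vert J\vert = 1$, then follows from (ii), since $w_J = j$ is trivially central in $\spans{j} \cong C_2$.

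I do not anticipate a serious obstacle: the argument is essentially a verification. The one place where care is needed is in case (ii), when confirming that the label between $b \in B$ and $a \in A$ remains $\infty$ in $\CCGo'$; this is the second entry of the remark's formula and rests on the admissibility condition that all edges between $A$ and $B$ are originally $\infty$-labeled. Everything else reduces to the observation that inner conjugation preserves orders, combined with the structural facts that $w_J$ acts trivially on $J^\perp$ and, in case (ii), also on $J$.
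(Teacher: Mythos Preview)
Your proof is correct and follows essentially the same approach as the paper: in each case you identify the twisted generating set as a conjugate (or the identity image) of $S$ and check that edge labels are preserved. The only minor difference is that you deduce case~(i) from case~(ii) by noting $w_J$ is automatically central when $|J|=1$, whereas the paper treats~(i) directly; both are straightforward.
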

\begin{proof}
If $w_J$ is central in $W_J$ the permutation on $J$ induced by $\tau$ is trivial. All connecting edges from $b\in B$ to $s\in J$ are replaced by edges connecting $b^{w_J}$ to $s$ that carry the same label. Hence the complete Coxeter graphs of $S$ and $\tau(S)$ are canonically isomorphic. 

Suppose that $A=\leer$. In this case the definition above results in $\tau$ being the conjugation of $S$ by $w_J$. Such a map does not change the underlying complete Coxeter graph. 

If $B=\leer$, then $\tau$ is the identity on $S$. Therefore the complete Coxeter graph remains unchanged. 
\end{proof}

Note that $w_J$ is trivially central in $W_J$ if $\vert J\vert=1$. 

As stated in \cite{BMMN}, the finite Coxeter groups in which the element $w_J$ in \cref{def:twist} is not central are those of type $\tA_n$ for $n\geq 2$, $ \tD_n$ for $n$ odd, $\tE_6$ or $\tI_2(m)$ for odd $m$; see \cite[{\S}~7]{BrieskornSaito} for details. These are the only cases in which twists may be nontrivial.

To get a nontrivial twist we have to make sure that we are not in the situation that the map $\tau$ defined by partial conjugation with $w_J$ as in \cref{def:twist} results in a trivial move. In the last two cases, where either $A$ or $B$ were equal to the empty set, the map $\tau$ had the property that it was a (potentially trivial) permutation of $S$. We have to exclude that this happens when both  $A\neq\leer$ and $B\neq \leer$.

\begin{lem}
\label{lem:reallyTrivialTwist}
Suppose $(W,S)$ is a Coxeter system and let $J$, $A$, $B \subseteq S$ be as in \cref{def:twist}. Suppose further that the resulting twist $\tau$ of $B$ along $J$ satisfies $\tau(S)=S$. Then $A=\leer$ or $B=\leer$.
\end{lem}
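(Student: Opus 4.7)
\emph{Plan.} I would argue by contradiction, assuming $B \neq \leer$ and aiming to derive $\tau(S) \neq S$. By construction, $\tau$ is the identity on $A$, the identity on $J^\perp$ (whose elements commute with $w_J$), and induces on $J$ (via conjugation by $w_J \in W_J$) the diagram involution that stabilises $J$ as a set. Hence, if $\tau(S) = S$, then $\tau$ is a bijection of the finite set $S$ stabilising $A$, $J$ and $J^\perp$; by complementation it must then stabilise $B$ as well, so $b^{w_J} \in B \subseteq S$ for every $b \in B$. The lemma therefore reduces to the claim that no $b \in S\setminus(J\cup J^\perp)$ satisfies $b^{w_J} \in S$.

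To prove this key claim, I would work in the Tits (geometric) representation of $(W,S)$ on the real vector space $V = \bigoplus_{s \in S}\R\alpha_s$ equipped with its standard symmetric bilinear form. Let $V_J := \vspan(\alpha_j : j \in J)$. Since $W_J$ is spherical the form is positive definite on $V_J$, so $V = V_J \oplus V_J^\perp$ and the subgroup $W_J$ fixes $V_J^\perp$ pointwise. Consequently $w_J(\alpha_b) - \alpha_b \in V_J$ for any $b \in S\setminus J$; in particular the coefficient of $\alpha_b$ in $w_J(\alpha_b)$ equals $1$. Now, $b^{w_J} \in S$ translates to $w_J(\alpha_b)$ being (up to sign) a simple root, and comparing the coefficient of $\alpha_b$ forces $w_J(\alpha_b) = \alpha_b$. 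Equivalently, letting $\bar{\alpha}_b \in V_J$ denote the $V_J$-component of $\alpha_b$, we must have $w_J(\bar{\alpha}_b) = \bar{\alpha}_b$.

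It remains to show that $w_J(\bar{\alpha}_b) = \bar{\alpha}_b$ forces $b \in J^\perp$. Writing $\bar{\alpha}_b = \sum_{j\in J} d_j \alpha_j$, we have $d = G^{-1} v$ with $G$ the Gram matrix of $W_J$ and $v_j = (\alpha_b,\alpha_j) \leq 0$. Because $W_J$ is irreducible spherical, $G$ is positive definite with non-positive off-diagonal entries, and the classical theory of irreducible $M$-matrices (equivalently, the Bourbaki description of fundamental weights for finite Coxeter groups) yields that $G^{-1}$ has strictly positive entries; hence $d_j \leq 0$ for every $j \in J$. On the other hand, $w_J$ acts on simple roots of $W_J$ by $\alpha_j \mapsto -\alpha_{\sigma(j)}$ for some involution $\sigma$ of $J$ (trivial precisely when $w_J$ is central in $W_J$), and the fixed-point condition $w_J(\bar{\alpha}_b) = \bar{\alpha}_b$ translates into $d_{\sigma(j)} = -d_j$ for each $j$. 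Paired with $d_j, d_{\sigma(j)} \leq 0$, this forces $d_j = 0$ for every $j \in J$, so $\bar{\alpha}_b = 0$, i.e.\ $b \in J^\perp$ --- contradicting $b \in B$.

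The main technical obstacle I anticipate is the strict positivity of $G^{-1}$, a classical but delicate input that I would invoke from a standard reference (Bourbaki or $M$-matrix literature) rather than reprove; beyond this, care is also required to handle the bilinear-form convention when $m_{s,t} = \infty$, though one still has $v_j \leq 0$ throughout the argument.
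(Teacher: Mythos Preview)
Your argument is correct and complete. It differs from the paper's proof in a meaningful way, so a short comparison is in order.

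The paper argues combinatorially: from $\tau(S)=S$ one gets that $\tau$ permutes $B$, and then, for $b\in B$, the conjugate $w_Jbw_J$ lies in the standard parabolic $W_{J\cup\{b\}}$; since $S\cap W_T=T$, the only way $w_Jbw_J$ can be a simple reflection is $w_Jbw_J\in J\cup\{b\}$, and the case $w_Jbw_J\in J$ is excluded because $B\cap J=\varnothing$. This is quick, but it leaves the residual case $w_Jbw_J=b$ to the assertion ``$J^\perp\cap B=\varnothing$, hence no fixed point'', which amounts precisely to the claim that $b$ commuting with $w_J$ forces $b\in J^\perp$ --- exactly the statement you prove in detail via the Tits representation.

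Your route bypasses the parabolic-closure step entirely and treats both cases uniformly: once you know $w_J(\alpha_b)-\alpha_b\in V_J$, the coefficient-of-$\alpha_b$ trick immediately rules out $w_J(\alpha_b)=\pm\alpha_{b'}$ for $b'\neq b$ and reduces to the fixed-point analysis. The sign argument using $d_{\sigma(j)}=-d_j$ together with $d_j\le 0$ (from the strict positivity of $G^{-1}$) is clean and handles both the $-1$-case and the non-central $w_J$ case at once. So your proof is more self-contained at the cost of invoking the $M$-matrix input; the paper's proof is shorter but tacitly relies on the same key fact about fixed points. Note also that, like the paper's proof, your argument never uses $A\neq\varnothing$ and in fact establishes the stronger conclusion $B=\varnothing$.
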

\begin{proof}
So suppose that $J$, $A$ and $B$ are as in \cref{def:twist}. Observe that $\tau\vert_B$ is a permutation of $B$ as a consequence of $\tau(S)=S$. 
As $J^\perp \cap B=\leer$ such a permutation will not have a fixed point. 
Let $b\in B$. Then $\tau(b)=w_J b w_J$ is a reflection in the Coxeter group $W$. Since $B \cap J = \leer$ and $w_J$ is the longest element of the standard parabolic subgroup $\spans{J} \leq W$, a reduced decomposition for the reflection $\tau(b)$ (with respect to the original Coxeter generating set $S$) involves all letters in $J \cup \{b\}$. Thus the smallest parabolic subgroup containing $\tau(b)$ is spanned by $J$ and $b$, and strictly contains $\langle J\rangle$; cf. \cite[Chapter~IV, Paragraph~1, Section~8]{BourbakiLie4-6}. Therefore $b$ is mapped to a reflection not contained in the parabolic subgroup spanned by $B$, and in particular is not mapped to some $b'\in B\setminus \Menge{b}$. This yields a contradiction. 
\end{proof}

Conjecturally, twists are the only relevant horizontal moves when it comes to solving the isomorphism problem (more on that in \cref{subsec:ReductionIsoProblem}). In \cref{subsec:twistConj} we formally discuss the twist conjecture which, in essence, states just that.

\subsubsection{Angle-deformations, cross-eyed twists  and other moves}

Before we proceed to explain the various reductions of the isomorphism problem and how they relate to the twist conjecture let us comment on other existing horizontal moves. 
We start with angle-deformations, introduced by Marquis and M\"uhlherr in Definition~3.1 of \cite{MarquisMuehlherr}. 

\begin{dfn}(Angle-deformations)
	\label{def:angle deformation}
	Let $(W,S)$ be a Coxeter system and $J=\{r,s\}\subseteq S$ an edge in its Coxeter--Dynkin diagram $\Gamma = \Gamma(W,S)$. Choose $w\in\langle J\rangle$ such that $\langle wrw^{-1}, s\rangle=\langle J\rangle$. 
	An \emph{$(r,s,w)$-deformation} of $S$ is a map $\delta: S\to W$ such that the following hold: 
	\begin{itemize}
		\item[(AD1)] $\delta(S)\subseteq S^W$,
		\item[(AD2)] $\delta(s)=s$ and $\delta(r)=wrw^{-1}$, 
		\item[(AD3)] $\delta(S)$ is a Coxeter generating set of $W$, and  
		\item[(AD4)] there exists a bijection $\Delta$ from the set of edges of $\Gamma$ to the set of edges of $\Gamma':=\Gamma(W, \delta(S))$  such that $\Delta(J)=\{wrw^{-1}, s\}$ and such that for all edges $K\neq J$ in $\Gamma$ there exists some $w_K\in W$ with $\Delta(K)=K^{w_K}$. 
	\end{itemize}
	A \emph{$J$-deformation} is an $(r,s,w)$-deformation for $J=\{r,s\}$ and some choice of $w$. By an \emph{angle-deformation} we mean a $J$-deformation for some $J$. 
\end{dfn}

Some but not all angle-deformations extend to automorphisms of $W$. Properties like this are discussed for example in Proposition 3.4 of \cite{MarquisMuehlherr}. 
Marquis and M\"uhlherr also provide characterizations of configurations which allow for angle-deformations.  

\subsubsection{Cross-eyed twists}
The definition of a cross-eyed twist is similar to, but technically more involved than the definition of an angle-deformation.  
Cross-eyed twists were introduced by Ratcliffe and Tschantz \cite[p.~69]{RatcliffeTschantz} for chordal Coxeter groups only. It is worth mentioning that this paper motivated the work of Marquis and M\"uhlherr in \cite{MarquisMuehlherr}. 

The name `cross-eyed twist' probably comes from the situation illustrated in Figure 3 of \cite{RatcliffeTschantz} where in a diagram looking like a crossed-out eye certain edges are swapped by a cross-eyed twist. They are also referred to as \emph{$5$-edge angle-deformations} and are indeed a deformation of the Coxeter--Dynkin diagram obtained from certain subconfigurations of the diagram in relation to an edge $J=\{r,s\}$ with label $5$. 
Certain (good irreducible) subsets $A$ of $\Gamma$ are twisted by a conjugation with a suitably chosen element $w_A$ in $W$. Other (bad irreducible simplices) $A\subseteq S$ are replaced by $w_A\beta_A(A)w_A^{-1}$ for some well chosen $w_A$ and a non-inner automorphism $\beta_A: \langle A\rangle \to \langle A\rangle$.  

We will not state the precise definition here but refer the reader to \cite[Theorem~5.2]{RatcliffeTschantz} and the definition right below it. It would be interesting to know precisely how cross-eyed twists and angle-deformations are related. Both moves were used to solve or reduce the saturated isomorphism problem. See \cref{subsec:ReductionIsoProblem} for details. 

\subsubsection{Further comments on horizontal moves}

There is also a notion of an \emph{$S$-transvection}. Such a move is another example of a non-reflection preserving modification of a given Coxeter generating set $S$. It was introduced by Howlett and M\"uhlherr in an unpublished preprint \cite{HowlettMuehlherr}. See Section~4 of \cite{BernhardSurvey} for a (partial) definition. 
As M\"uhlherr states, this is not an isomorphism easily seen from the diagram. 

Another type of move also mentioned in \cite{BernhardSurvey} is the following. 
We call a subset $J\subseteq S$ of a Coxeter generating set $S$ of $W$ a \emph{graph factor} of $(W, S)$ if $J$ is spherical and if for all $t\in S\setminus J$ either $t \in J^\perp$ or the order of $tj$ is infinite for all $j\in J$. If $J$ is a graph factor and $\alpha$ an automorphism of $\langle J\rangle$, then one can verify that there is a unique automorphism of $W$ stabilizing the subgroup $\langle J\rangle$, inducing $\alpha$ on it and inducing the identity on $S\setminus J$. Such an automorphism is then called \emph{$J$-local}. 
The $S$-transvections and $J$-local automorphisms are used to reduce the isomorphism problem; see \cite[Section~4]{BernhardSurvey} for details.

\subsection{The twist conjecture}
\label{subsec:twistConj}

As discussed in the previous subsection, twists provide a way to move within a fixed layer of the Coxeter galaxy. For a given Coxeter system $(W,S)$ with complete Coxeter graph $\CCGo$ one may obtain by a nontrivial twist a nonisomorphic complete Coxeter graph $\CCGo'$ which defines the same group. 
It is shown in \cite{BMMN} that not only does $\CCGo$ have the same rank as $\CCGo'$, they also define the same set of reflections. 
This theorem motivated the following conjecture, which was first stated as Conjecture~8.1 in \cite{BMMN}. See also \cite[Conjecture~7.3]{NuidaSurvey}. 
Note that any two reflection-compatible Coxeter systems for a same $W$ automatically have the same rank by Theorem~3.8. in \cite{BMMN}. 

\begin{conj}[The (reflection-compatible) twist conjecture]
\label{conj:rc-twistConjecture}
Let $(W,S)$ and $(W,S')$ be two reflection-compatible (cf. \cref{def:reflectioncompatible}) Coxeter systems for the same group $W$. Let $\CCGo$ and $\CCGo'$ be their respective complete Coxeter graphs. Then the complete graph $\CCGo'$ can be obtained from $\CCGo$ by a finite sequence of twists.  
\end{conj}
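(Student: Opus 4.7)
The plan is to reduce to the irreducible setting and then to induct on a complexity measure comparing the two generating sets, using the matching theorems to produce candidate twists. Because the conjecture is genuinely open in full generality, the sketch is necessarily speculative at its crucial step. First, one applies \cref{thm:basicmatchingthm} to set up a correspondence between basic subsets of $S$ and of $S'$. Reflection-compatibility is much stronger than abstract isomorphism: the two systems determine the same set of reflections in $W$, in particular the same partition of those reflections into $W$-conjugacy classes. Consequently the non-isomorphic matchings in \cref{thm:basicmatchingthm}(ii) (the type pairs $\tB_{2k+1}/\tD_{2k+1}$ and $\tI_2(4k+2)/\tI_2(2k+1)$) cannot arise, so every basic subset of $(W,S)$ matches isomorphically with one of $(W,S')$. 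Peeling off direct-product and free-product decompositions via \cref{thm:NuidaDirect} and \cref{thm:MihalikTschantzVisual} then reduces the problem to the case where $W$ is directly and freely indecomposable and both $(W,S)$, $(W,S')$ are irreducible.

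Second, I would induct on the complexity measure
\[
  \delta(S,S') := \vert S \setminus S' \vert,
\]
viewing both generating sets as subsets of $W$. The base case $\delta = 0$ forces $S = S'$ as subsets of $W$, whence $\CCGo \cong \CCGo'$. For the inductive step, pick $b \in S \setminus S'$; reflection-compatibility guarantees that $b$ is $W$-conjugate to some $b' \in S'$. The strategy is to locate a spherical parabolic $\spans{J} \leq W$ with $J \subseteq S$ whose longest element $w_J$ conjugates $b$ (and perhaps several other generators) into $S'$; the basic matching from the first step points to natural candidates for $J$, since matched basic subsets of the same Coxeter type in $S$ and $S'$ must be related by conjugation. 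Taking $B$ to be the set of those generators on which $w_J$ must act and $A := (S \setminus (J \cup J^\perp)) \setminus B$, one verifies conditions (i) and (ii) of \cref{def:twist}, producing an elementary twist $\tau$ with $\delta(\tau(S), S') < \delta(S,S')$. Iterating closes the induction.

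The main obstacle, and the reason the conjecture remains open, is precisely the existence of this progress-making twist at every inductive step. Reflection-compatibility ensures that $S$ and $S'$ index the same wall arrangement in the Davis complex, but this does not \emph{a priori} guarantee that the two fundamental chambers are joined by wall-crossings whose individual steps satisfy the rigid combinatorial conditions of \cref{def:twist}, most delicately the requirement that all edges between $A$ and $B$ in the complete Coxeter graph be $\infty$-labeled. It may well be that one genuinely needs intermediate angle-deformations or cross-eyed twists from \cref{subsec:horizontalmoves}, in which case the conjecture becomes equivalent to the statement that, under reflection-compatibility, every such exotic move can be rewritten as a composition of elementary twists. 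A geometric attack via the Davis complex, interpreting each $w_J$ as a point-reflection through a spherical face and searching for a wall-path between the two chambers, seems the most promising direction; but the bookkeeping of the data $(A,B,J)$ along such a path is exactly where previous partial approaches have gotten stuck, as witnessed by the many conditional results surveyed in \cref{subsec:twistConj}.
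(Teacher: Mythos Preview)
The statement you are attempting to prove is \emph{not} a theorem in the paper; it is \cref{conj:rc-twistConjecture}, stated explicitly as a conjecture, and the paper offers no proof of it. More importantly, the paper records (immediately after \cref{ex:twist-not-enough}) that this conjecture is \emph{false}: Ratcliffe and Tschantz~\cite{RatcliffeTschantz} exhibited two reflection-compatible Coxeter generating sets of the same rank for the same group whose complete Coxeter graphs are not twist equivalent. Hence no proof can succeed, and any argument must break down somewhere.

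In your sketch the breakdown occurs exactly where you yourself flag it: the inductive step asserting that one can always locate a spherical subset $J \subseteq S$ and a partition $A \cup B$ satisfying the conditions of \cref{def:twist} with $\delta(\tau(S),S') < \delta(S,S')$. The Ratcliffe--Tschantz counterexample shows that such a progress-making elementary twist need not exist even under reflection-compatibility; one genuinely needs the cross-eyed twists of \cref{subsec:horizontalmoves} to connect the two systems. Your closing remark that perhaps ``every such exotic move can be rewritten as a composition of elementary twists'' is precisely what their example refutes. This is why the paper passes to the stronger hypothesis of angle-compatibility in \cref{conj:ac-twistConjecture}, which remains open.
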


In other words, it was conjectured that Coxeter systems are \emph{reflection-rigid} up to diagram twisting. See for example \cite{NuidaSurvey} for a precise definition. 

\begin{dfn}[Twist equivalence]
Suppose $\CCGo$ and $\CCGo'$ are two complete Coxeter graphs. We say that $\CCGo$ and $\CCGo'$ are \emph{twist equivalent} if one can be obtained from the other by a finite sequence of twists. 
\end{dfn}

Twist equivalent graphs are thus contained in a same layer of a connected component of the Coxeter galaxy. 

A natural question to ask is whether the intersection of a connected component with a fixed layer $\mc{G}_k$ can be computed by twists only. In other words, are twists enough to obtain all Coxeter graphs of a fixed rank for a given Coxeter group?  
Stated like this, the answer is easily seen to be `no' as not all vertices in a layer of a connected component are reflection-compatible.  

\begin{exm}\label{ex:twist-not-enough}
Consider the starlet $\CCGo$ on three vertices and edge labels $4k+2$, $4l+2$ and $\infty$, with $k \neq l$. This starlet has two distinct pseudo-transpositions, along which we can blow it up following \cref{lem:blowingup}. Let $\CCGo_1$ (resp. $\CCGo_2$) be obtained from $\CCGo$ by blowing up the edge with label $4k+2$ (resp. $4l+2$). For an illustration in case $k=1$ and $l=2$ see \cref{fig:example_verticalcore} with the vertices labeled by the graphs shown in \cref{fig:isomorphicGroups}. 

By construction and by \cref{def:Galaxy}, there must be an (horizontal) edge connecting $\CCGo_1$ to $\CCGo_2$ within their layer $\mc{G}_4$. This edge, however, cannot arise from a diagram twist. Indeed, let 
\[ W \cong \spans{x,c,b,w \mid x^2, c^2, b^2, w^2, (xc)^{2k+1}, (cb)^{4l+2}, (xw)^2, (cw)^2} \]
be the Coxeter presentation for the group corresponding to $\CCGo_1$. In the notation of \cref{def:twist}, one can only twist $\CCGo_1$ along $J = \Menge{x,c}$ or along $J= \Menge{c,b}$. 

The first case, $J = \Menge{x,c}$, gives $J^\perp = \Menge{w}$ and forces $A = \leer$ (otherwise this twist is already trivial). Applying the twist along this choice of $J$ yields the new Coxeter generating set $\Menge{x,c,w,w_J b w_J}$. Since $\mathrm{ord}(x w_J b w_J) = 4l+2$ and $\mathrm{ord}(w w_J b w_J) = \infty = \mathrm{ord}(c w_J b w_J)$, it follows that $\CCGo_1$ and the twisted complete Coxeter graph are isomorphic (as edge-labeled graphs) by mapping $x \mapsto c$, $c \mapsto x$, $w \mapsto w$ and $b \mapsto w_J b w_J$. In other words, the given twist is trivial.

In the second case, $J = \Menge{c,b}$, one has $J^\perp = \leer$. Again, in order to get a possibly nontrivial twist, this choice of $J$ forces $A = \leer$. Thus $B = \Menge{x,w}$. Since $4l+2$ is even, the longest element $w_J$ is central in $\spans{J}$, so that the twisted Coxeter generating set $\Menge{w_J x w_J, c, b, w_J w w_J}$ is merely a conjugate of $\Menge{x,c,b,w}$. Thus the given twist is trivial also in this case.
\end{exm}

It hence becomes clear that one cannot expect that twists provide a full picture of every layer of the galaxy. This example, however, does not imply that \cref{conj:rc-twistConjecture} is false. 
An actual counterexample to \cref{conj:rc-twistConjecture} was provided in Figure 3 of \cite{RatcliffeTschantz} by Ratcliffe and Tschantz. See also Section 7 of \cite{NuidaSurvey} for a related discussion. The example of Ratcliffe--Tschantz provides two nonisomorphic but reflection-compatible graphs which are not twist equivalent but define the same group.

One can see that the twist conjecture needs to be restricted to the top layer, that is, to saturated Coxeter systems and graphs.

\begin{qst}[Reachability Question]
	\label{qst:reachability}
Does there exist a finite list of vertical and horizontal moves such that, for any vertex in the Coxeter galaxy, any other vertex in its connected component can be reached by applying a finite sequence of these moves? 
\end{qst}

The counterexample to \cref{conj:rc-twistConjecture} by Ratcliffe and Tschantz suggests to replace reflection-compatible systems by angle-compatible ones. 

\begin{dfn}[Angle-compatible]
\label{dfn:angle compatible}
	Two Coxeter generating sets $S$ and $S'$ of a Coxeter group $W$ are called \emph{angle-compatible} if for all $s,t\in S$ with $\langle s,t \rangle$ finite the set $\{s,t\}$ is conjugate to some pair $\{s',t'\}$ in $S'$. 
    We also refer to the Coxeter systems or any of their graphs or diagrams as being \emph{angle-compatible}. 
\end{dfn} 

By taking $s=s'$ one can see that angle-compatible systems are  reflection-compatible.

\begin{conj}[The (saturated) angle-compatible twist conjecture]
\label{conj:ac-twistConjecture}
Let $(W,S)$ and $(W,S')$ be two (saturated) angle-compatible Coxeter systems. Let $\CCGo$ and $\CCGo'$ be their respective complete Coxeter graphs. Then the complete graph $\CCGo'$ can be obtained from $\CCGo$ by a finite sequence of twists.  
\end{conj}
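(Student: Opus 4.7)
The plan is to attack \cref{conj:ac-twistConjecture} by a strong induction on the rank $\vert S\vert = \vert S'\vert$, combining a reduction via canonical graph-of-groups decompositions with the matching theorems of Mihalik--Ratcliffe--Tschantz to promote local matchings of parabolics to global twist sequences. First, I would reduce to the case where $W$ is irreducible and freely indecomposable. By \cref{thm:NuidaDirect} and the corollary to \cref{thm:MihalikTschantzVisual}, both $(W,S)$ and $(W,S')$ split as free products of irreducible Coxeter systems, and each such free factor further decomposes as an (essentially unique) direct product. Since direct and free product decompositions are unique up to conjugation for Coxeter groups and since both systems are angle-compatible, one can pair off the factors of $(W,S)$ with those of $(W,S')$; twists performed inside a factor lift verbatim to twists on the whole diagram, so the inductive hypothesis handles this step.

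Next, I would try to reduce to the case where $W$ admits no nontrivial splitting along a \emph{finite} parabolic. Using \cref{thm:MihalikTschantzVisual} and Bass--Serre theory, any two such splittings realize $W$ as the fundamental group of a tree of Coxeter groups; angle-compatibility guarantees that the (finite) edge groups are conjugate in $W$, so after conjugating one may assume the two splittings share their cut-parabolic. Restricting the given isomorphism to the vertex groups, the inductive hypothesis provides twist sequences on each piece, which can then be concatenated into a twist sequence on $\CCGo$. The delicate point here is to align the sets $A, B, J$ of \cref{def:twist} with the cut, so that the pieces twist independently without disturbing the finite amalgamating subgroup.

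For the remaining atomic case (irreducible and one-ended $W$), the plan is to use the Basic Matching Theorem (\cref{thm:basicmatchingthm}). Because both generating sets are expanded, \cref{obs:maxranknoblowups} together with the Maximum Rank Theorem forces all basic subsets to match isomorphically, selecting a canonical conjugacy class of maximal spherical parabolics. Building on Caprace--M\"uhlherr's solution in the $2$-spherical case~\cite{CapraceMuehlherr2-Spherical} and Ratcliffe--Tschantz's chordal case~\cite{RatcliffeTschantz}, one would construct, for each matched basic subset $J$, a candidate longest element $w_J$ and twist $\CCGo$ along it, choosing $A,B \subseteq S \setminus (J \cup J^\perp)$ dictated by the $\infty$-labeled edges separating the matched basic subsets. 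A monovariant on the multiset of edge labels (or on the combinatorial type of unmatched basic subsets) should guarantee termination.

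The main obstacle will be the third step: controlling the interaction between distinct basic subsets whose union is not contained in any visual splitting. The counterexample of Ratcliffe--Tschantz to \cref{conj:rc-twistConjecture} shows that cross-eyed twists are genuinely needed in the reflection-compatible setting, so a crucial subclaim is that the strengthened angle-compatibility hypothesis forbids precisely those configurations -- essentially that angle-compatibility detects the dihedral parabolics finely enough that no $5$-edge angle deformation can be required. Formalizing this exclusion, and simultaneously maintaining angle-compatibility after each partial twist (so that the inductive monovariant can continue), is where I expect the argument to be hardest, and likely where any genuinely new input beyond the existing literature will be required.
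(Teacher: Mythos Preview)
The statement you are attempting to prove is a \emph{conjecture}, not a theorem: the paper does not provide a proof of \cref{conj:ac-twistConjecture}. It is explicitly presented as open, with only partial results cited (large-type~\cite{MuehlherrWeidmann}, chordal~\cite{RatcliffeTschantz}, Weigel's restricted case~\cite{WeigelTwistTriangle}, certain FC-type~\cite{HuangPrzytycki}). There is therefore no proof in the paper against which to compare your proposal.

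What you have written is not a proof but a strategy outline, and you yourself acknowledge this in the final paragraph. Even as a strategy, several of the reduction steps are not justified. First, \cref{thm:NuidaDirect} concerns \emph{direct} decomposability, not free products, so it does not yield the free product splitting you claim; and the corollary to \cref{thm:MihalikTschantzVisual} only says that a free decomposition has Coxeter factors, not that the \emph{given} Coxeter generating sets $S$ and $S'$ split compatibly into irreducible subsystems paired by the isomorphism. Second, the assertion that ``twists performed inside a factor lift verbatim to twists on the whole diagram'' is not automatic: the conditions in \cref{def:twist} on $A$, $B$, $J$, $J^\perp$ are global in $S$, and an elementary twist internal to a free or direct factor need not satisfy them in the ambient system without further argument. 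Third, the claim that angle-compatibility forces the finite edge groups in two visual splittings to be conjugate is unsupported; angle-compatibility only controls conjugacy of \emph{pairs} $\{s,t\}$ generating finite dihedrals, not of arbitrary finite parabolics.

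Most importantly, your ``atomic'' third step is precisely the content of the conjecture in the cases where it is open. Invoking a vague monovariant on edge labels and citing the $2$-spherical and chordal solutions does not constitute an argument: those papers work under strong structural hypotheses that fail in general, and the Ratcliffe--Tschantz counterexample to \cref{conj:rc-twistConjecture} shows that new phenomena genuinely arise. Your final paragraph correctly identifies this as the crux, but offers no mechanism for the ``crucial subclaim'' that angle-compatibility excludes the problematic configurations. In short, the proposal reduces the conjecture to itself.
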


\cref{conj:ac-twistConjecture} (in the nonsaturated version) was stated as Conjecture 2 in \cite{BernhardSurvey}. As we will explain in \cref{subsec:ReductionIsoProblem} its proof might lead to the existence of a solution to the isomorphism problem.

Instances for which the angle-compatible twist conjecture is solved are, for example, the following: M\"uhlherr and Weidmann established the conjecture for large-type Coxeter groups \cite{MuehlherrWeidmann}; Ratcliffe and Tschantz showed that chordal Coxeter groups satisfy the conjecture \cite{RatcliffeTschantz}; Weigel proved the conjecture for Coxeter groups of rank at least three and not containing parabolic subgroups of certain types \cite{WeigelTwistTriangle}; 
And more recently Huang and Przytycki \cite{HuangPrzytycki} proved the conjecture for certain Coxeter groups of FC-type. 

\begin{remark}[{Adjustments on the twist conjecture}]
{Some more clarification about the twist conjecture and its variants is due. Although we formulated them here for complete Coxeter graphs, the original conjectures were stated for Coxeter systems. While the systems and their diagrams are usually interchangeable, there is a subtlety around the twist conjecture that might require them to be looked separately. 
Despite this subtlety being clear to experts, we explain it in the sequel for convenience of the reader.}

{As originally stated, the (angle-compatible, nonexpanded) twist conjecture \cite[Conjecture~2]{BernhardSurvey} simply predicts that angle-compatible Coxeter generating sets for the same Coxeter group can be transformed into one another by a (finite) sequence of (possibly trivial) twists. On the other hand, the conjecture typically cited in the literature as M\"uhlherr's twist conjecture is often meant as follows: if $S,S' \subseteq W$ are angle-compatible Coxeter generating sets for the same Coxeter group $W$, then there exists a finite sequence of twists and/or conjugations sending $S$ to $S'$; see, for instance, \cite[penultimate paragraph of p.~2244]{CapracePrzytyckiTwistRigid} or \cite[second paragraph of p.~2083]{HuangPrzytycki}.}

{The above `popular correction' of the twist conjecture amounts to adapting the definition of {twist equivalence} \cite{BernhardSurvey} as to allow for conjugations. That is, $S$ and $S'$ are called \emph{twist equivalent} if they differ by conjugations in $W$ and (finitely many) twists. Without this modification, the twist conjecture as originally stated (\cite[Conjecture~2]{BernhardSurvey}) cannot hold true because not every conjugation is realized by a twist. Alternatively, if the twist conjecture only allows for nontrivial twists (in particular, does not take arbitrary conjugations into account), then the validity of \cite[Conjecture~2]{BernhardSurvey} would contradict \cite[Theorem~1.1]{CapracePrzytyckiTwistRigid}.}

{Altogether, M\"uhlherr's twist conjecture says that, if $S,S' \subseteq W$ are angle-compatible, then there exists a (possibly trivial) $w \in W$ such that $S'$ is obtained from the conjugate $S^w$ by applying finitely many twists. As seen above, this is consistent with the literature \cite{CapracePrzytyckiTwistRigid,RatcliffeTschantz,WeigelTwistTriangle,HuangPrzytycki} but also with the adjusted version of {twist rigidity}: a system $(W,S)$ is called \emph{twist rigid} if it does not admit a {nontrivial} twist that yields a conjugate system; cf. \cite[p.~6]{BernhardSurvey}.} 

{Lastly, we stress that M\"uhlherr's twist conjecture (with or without conjugations) and \cref{conj:ac-twistConjecture} are compatible in our language by \cref{rem:invisibleinfo}(ii). Indeed, if two systems $(W,S)$ and $(W,S')$ differ by a conjugation and a sequence of twists, then their respective complete Coxeter graphs $\CCGo$ and $\CCGo'$ can be obtained from one another by a sequence of twists.}
\end{remark}


\subsection{Reduction of the isomorphism problem}
\label{subsec:ReductionIsoProblem}

This section contains an overview on how the isomorphism problem reduces to certain subproblems. 
We summarize all known reductions in \cref{fig:reduction} and will explain how the various arrows in this figure are obtained. Implications from top to bottom and left to right in the two columns of this figure simply follow from the definitions and by restriction of the problem.   
\smallskip

\begin{figure}[htb]
    \begin{center}
\begin{tikzpicture}[node distance=50pt]
\node (b1) [fill=gray!20,draw,rounded corners,align=center,anchor=center] 
      {(IP) \\ Isomorphism Problem};
\node (ph1) [right=of b1,anchor=center,align=center]
      {\phantom{(ACTC) Angle-Compatible} \\ \phantom{Twist Conjecture}};
\node (b2) [right=of ph1,fill=gray!20,draw,rounded corners,align=center,anchor=center]         
      {(SIP) \\ Saturated \\ Isomorphism Problem};
\node (b4) [below=of b2,fill=gray!20,draw,rounded corners,align=center,anchor=center] 
      {(SRIP) \\ Saturated and \\ Reflection-Compatible \\ Isomorphism Problem};
\node (ph2) [left=of b4,anchor=center,align=center] 
      {\phantom{(ACTC) Angle-Compatible} \\ \phantom{Twist Conjecture}};
\node (b3) [left=of ph2,fill=gray!20,draw,rounded corners,align=center,anchor=center] 
      {(RIP) \\ Reflection-Compatible \\ Isomorphism Problem};
\node (b6) [below=of b4,fill=gray!20,draw,rounded corners,align=center,anchor=center]  
     {(SAIP) \\ Saturated and \\ Angle-Compatible \\ Isomorphism Problem};
\node (ph3) [left=of b6,anchor=center,align=center] 
      {\phantom{(ACTC) Angle-Compatible} \\ \phantom{Twist Conjecture}};
\node (b5) [left=of ph3,fill=gray!20,draw,rounded corners,align=center,anchor=center] 
      {(AIP) \\ Angle-Compatible \\ Isomorphism Problem};
\node (ph4) [below=of b6,anchor=center,align=center] 
      {\phantom{(ACTC) Angle-Compatible} \\ \phantom{Isomorphism Problem}};
\node (b7) [left=of ph4,fill=gray!20,draw,rounded corners,align=center,anchor=center]
      {(ATC) \\ Angle-Compatible \\ Twist Conjecture};
\draw[double equal sign distance,arrows=-Implies,thick] (b1) -- (b3);
\draw[double equal sign distance,arrows=Implies-Implies,thick] (b3) to node [left] {\cite{MarquisMuehlherr}} (b5);
\draw[double equal sign distance,arrows=-Implies,thick] (b7) -- (b5);
\draw[double equal sign distance,arrows=Implies-Implies,thick] (b2) to node [right] {(Howlett--M\"uhlherr)} (b4);
\draw[double equal sign distance,arrows=Implies-Implies,thick] (b4) to node [right] {\cite{MarquisMuehlherr}} (b6);
\draw[double equal sign distance,arrows=-Implies,thick] (b5) -- (b6);
\draw[double equal sign distance,arrows=-Implies,thick] (b3) -- (b4);
\draw[double equal sign distance,arrows=Implies-Implies,thick] (b1) to node [auto] {\cref{thm:navigatingverticallayers}} (b2);
\end{tikzpicture}
\end{center}
    \caption{The implications illustrated in this figure follow from (potential) reductions of the isomorphism problem. It illustrates how finding a solution to the isomorphism problem reduces to proving a version of the twist conjecture. Note that some of the implications are not equivalences. 
    }
    \label{fig:reduction}
\end{figure}
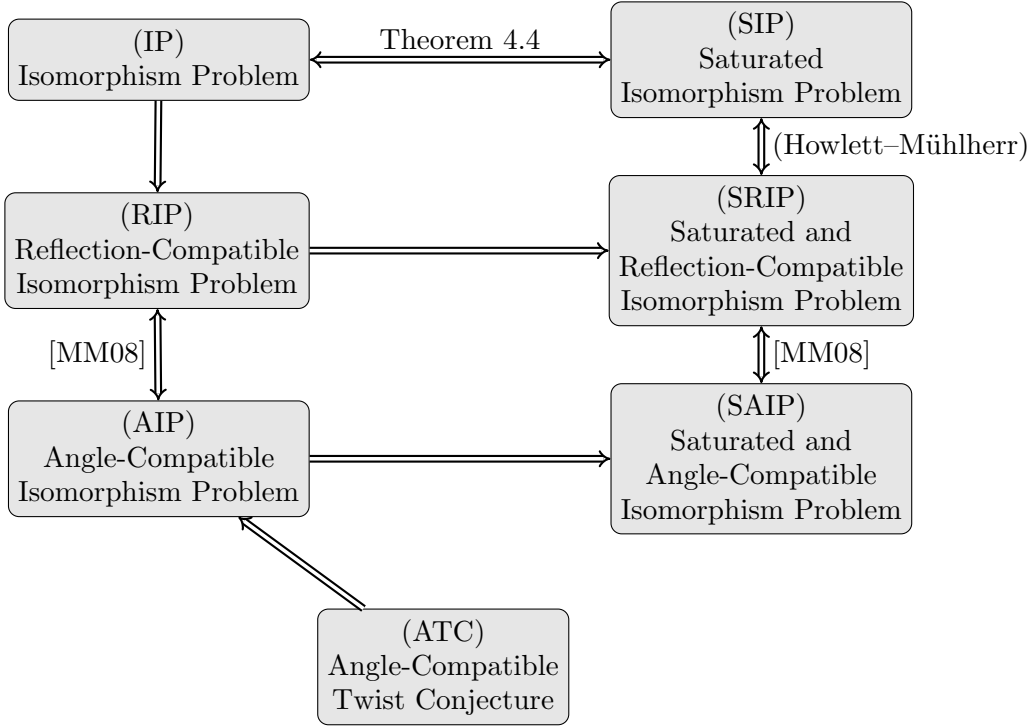

A first possibility to reduce the isomorphism problem to a smaller problem is provided by \cref{thm:navigatingverticallayers}. This theorem allows us to transform any complete Coxeter graph into an saturated graph with isomorphic Coxeter group. 
That is, \cref{qst:isoProblem} reduces to solving the isomorphism problem in the top layer. Hence in \cref{fig:reduction} the two topmost problems (the isomorphism problem and the saturated isomorphism problem) are equivalent. 

\begin{qst}[The saturated isomorphism problem.]
\label{qst:saturatedIsoProblem}
	Given two saturated complete Coxeter graphs, is there an algorithm that decides whether the corresponding vertices of the Coxeter galaxy are in a same connected component? 
\end{qst}

A second reduction is done by passing to the reflection-compatible case, that is considering (SRIP) instead of (RIP) in \cref{fig:reduction}. The saturated version of the isomorphism problem admits  a reduction to the reflection-compatible case by an unpublished theorem of Howlett and M\"uhlherr \cite{HowlettMuehlherr}, also stated as Theorem B.2 in \cite{CapracePrzytyckiTwistRigid}. Let us now state the corresponding definitions and questions. 
By Lemma~3.7 of \cite{BMMN} or Corollary~A.2 of \cite{CapracePrzytyckiTwistRigid}, reflection-compatibility may be defined as follows. 

\begin{dfn}[Reflection-compatible Coxeter systems] \label{def:reflectioncompatible}
	Let $W$ be a Coxeter group and $S$ and $S'$ two Coxeter generating sets for $W$. Then $S$ and $S'$ are \emph{reflection-compatible} if $S^W = (S')^W$, that is, their sets of reflections agree.  
    By slight abuse of notation we will also call the associated graphs and diagrams reflection-compatible. 	
\end{dfn}	

\begin{qst}[(Saturated) reflection-compatible isomorphism problem] 
\label{qst:reflectioncompIsoProblem}
Given two (saturated and) reflection-compatible complete Coxeter graphs, is there an algorithm that decides whether the corresponding vertices of the Coxeter galaxy are in a same connected component? 
\end{qst}

The notion of $J$-deformations (\cref{def:angle deformation}) allows one to reduce \cref{qst:reflectioncompIsoProblem} to the saturated and angle-compatible version, that is \cref{qst:anglecompIsoProblem} below. (See (SRIP) and (SAIP) in the right column of \cref{fig:reduction}). 

\begin{qst}[(Saturated) angle-compatible isomorphism problem] 
\label{qst:anglecompIsoProblem}
Given two (saturated and) angle-compatible complete Coxeter graphs, is there an algorithm that decides whether the corresponding vertices of the Coxeter galaxy are in a same connected component? 
\end{qst}

Putting all these reductions together we have seen that it should suffice to solve the (saturated) angle-compatible twist conjecture in order to prove existence of a solution to the isomorphism problem. 
In doing so one has to rely on the unpublished Theorem~B.2 in \cite{CapracePrzytyckiTwistRigid} of Howlett and M\"uhlherr \cite{HowlettMuehlherr}.  

\begin{prop}
If the (saturated) angle-compatible twist conjecture is true, then the isomorphism problem for Coxeter groups is decidable.
\end{prop}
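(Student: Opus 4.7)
The plan is to traverse the reduction diagram of \cref{fig:reduction} from top to bottom, verifying that each arrow can be made algorithmic under the hypothesis that \cref{conj:ac-twistConjecture} holds. Given as input two complete Coxeter graphs $\CCGo_1, \CCGo_2$, I first reduce to expanded systems. Using \cref{def:pseudotransp} and \cref{lem:blowingup}, the set of blow-ups applicable to a given graph is finite and explicitly computable; by \cref{obs:maxranknoblowups}, iterated blow-ups terminate at expanded graphs after finitely many steps. This produces, for each input, a finite effectively computable family of expanded complete Coxeter graphs with the same underlying Coxeter group, thereby reducing (IP) to finitely many instances of (EIP).

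Next, I would invoke the Howlett--M\"uhlherr theorem (cited as Theorem~B.2 of \cite{CapracePrzytyckiTwistRigid}) to replace each expanded graph by a finite family of expanded reflection-compatible graphs for the same group, passing from (EIP) to (ERIP). I would then apply Marquis--M\"uhlherr's $J$-deformation theory \cite{MarquisMuehlherr} to go from (ERIP) to (EAIP), again producing finitely many angle-compatible replacements at each step. Both reductions are used as black boxes and their algorithmic content is assumed.

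It then remains to decide, for two expanded angle-compatible complete Coxeter graphs, whether they define isomorphic underlying groups. By the assumed \cref{conj:ac-twistConjecture}, together with \cref{rem:invisibleinfo}(ii) (which guarantees that conjugation does not alter the complete Coxeter graph), these two graphs define isomorphic Coxeter groups if and only if they are twist-equivalent. An elementary twist is parametrized by subsets $J, A, B \subset S$ of the finite vertex set (cf.\ \cref{def:twist}), so only finitely many twists are applicable to a given graph, and by \cref{thm:FinitenessConnComps} the full twist orbit is contained in a single finite connected component of the galaxy. A breadth-first search through all sequences of elementary twists therefore terminates and either finds the target graph or certifies its absence, concluding the decision procedure.

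The main technical hurdle is the reliance on the unpublished Howlett--M\"uhlherr reduction, whose algorithmic effectiveness is needed to make the chain above explicit; this is consistent with the fact that most currently known partial solutions to the isomorphism problem depend on the same black box. The remaining ingredients---bounding the expansion process via the Maximum Rank Theorem, effectively enumerating twists and $J$-deformations, and invoking local finiteness of connected components of $\mc{G}$---are mechanical once \cref{conj:ac-twistConjecture} is granted.
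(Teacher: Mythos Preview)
Your proof is correct and follows essentially the same chain of reductions as the paper's own argument: (IP) $\leftrightarrow$ (EIP) via blow-ups (\cref{thm:navigatingverticallayers}), then to the reflection-compatible case via Howlett--M\"uhlherr, to the angle-compatible case via Marquis--M\"uhlherr \cite{MarquisMuehlherr}, and finally deciding twist equivalence assuming \cref{conj:ac-twistConjecture}. The paper traverses the diagram of \cref{fig:reduction} in the reverse direction (starting from (ATC) and partly through the left column), but the ingredients are identical; your version is simply more explicit about the algorithmic content, in particular the termination of the blow-up process and the breadth-first search over twists using local finiteness of $\mc{G}$.
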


\begin{proof}
A proof of the angle-compatible twist conjecture implies a solution to the angle-compatible isomorphism problem, which is \cref{qst:anglecompIsoProblem}. 
By Theorem 2 of \cite{MarquisMuehlherr}, a solution to the angle-compatible problem implies that the reflection-compatible one, see \cref{qst:reflectioncompIsoProblem}, is decidable. 
This obviously implies a solution to its saturated version. 
By Howlett and M\"uhlherr's result, Theorem B.2 in \cite{CapracePrzytyckiTwistRigid}, this in turn implies an existing solution to the saturated isomorphism problem stated in \cref{qst:saturatedIsoProblem}. Finally, \cref{thm:navigatingverticallayers} implies that there exists a solution to the (classic) isomorphism problem.  
\end{proof}

\subsection{Known parts of the galaxy}
\label{subsec:survey}

As it turns out, for many subfamilies of Coxeter groups there is a fairly complete understanding of possible horizontal edges. We finish the paper by collecting major results that allow us to paint a larger picture of the well-known parts of the Coxeter galaxy. Many results have intersections with one another.

\subsubsection{Notions of rigidity} \label{sec:rigiditynotions}

Following established literature, a Coxeter group $W$ is called \emph{strongly rigid} if any two Coxeter generating sets for $W$ can be bijectively mapped to one another via an inner automorphism of $W$. In particular (cf. \cref{rem:invisibleinfo}), any strongly rigid Coxeter group corresponds to an isolated point in the Coxeter galaxy. 

Starting with geometry, a famous theorem of Charney--Davis~\cite{CharneyDavis} establishes that Coxeter groups with geometric actions on contractible (pseudo-)manifolds are strongly rigid, hence are isolated in the galaxy. 
This was further generalized by Caprace--Przytycki \cite{CapracePrzytyckiBipolar}, who identified the broader geometric condition of bipolarity that still yields strong rigidity. In particular, if $(W,S)$ is bipolar, then $\CCG{W,S} \in \mc{G}$ is isolated. (Since the definition of bipolarity is rather technical --- cf. Theorem~1.2 in \cite{CapracePrzytyckiBipolar} --- we only mention briefly some examples: the groups of Charney--Davis \cite{CharneyDavis}, including cocompact Euclidean or hyperbolic Coxeter groups \cite{DavisReflections}, as well as irreducible, $2$-spherical, nonspherical Coxeter groups \cite{CapraceMuehlherr2-Spherical} are bipolar.) 

The above task of classifying strongly rigid Coxeter groups culminated recently in a paper by Howlett, M\"uhlherr and Nuida. They obtained in \cite{HowlettMuehlherrNuida} a complete characterization of strongly rigid Coxeter groups, and the technical conditions can be read off of any complete Coxeter graph for the given group; see \cite[Theorem~3]{HowlettMuehlherrNuida}. 

In a related paper by Huang and Przytycki \cite{HuangPrzytycki} the notion of $k$-rigid Coxeter systems is introduced. 
Twist rigid systems are isolated points in the top layer of their connected components in case the twist conjecture holds. So in the top layer, twist rigidity is equivalent to rigidity given the twist conjecture. Below the top layer the two concepts differ, as previous examples show. 

Independently of the above mentioned works, early results of Bahls \cite{BahlsThesis} and Mihalik \cite{MihalikEvenIsoThm} also led to a certain rigidity notion for even Coxeter groups: if $W$ is even, then there is a unique even complete Coxeter graph in $\mc{G}$ corresponding to $W$. Moreover, Mihalik describes --- via twists and pseudo-transpositions --- how to check whether a Coxeter group is even and, if so, how to construct its even complete Coxeter graph. Thus, despite even systems not necessarily being isolated in the galaxy, the machinery of Bahls and Mihalik was an early effective solution to both the isomorphism problem (\cref{qst:isoProblem}) and the Reachability \cref{qst:reachability} in this class.

\subsubsection{Chordal Coxeter groups}

Ratcliffe and Tschantz \cite{RatcliffeTschantz} study a particular class of Coxeter systems called chordal. Within this class of groups the isomorphism problem is solved, the angle-compatible twist conjecture~\ref{conj:ac-twistConjecture} is proven and the Reachability \cref{qst:reachability} is positively answered. 
A Coxeter system is \emph{chordal} if its defining graph (as defined in \cref{def:CoxDiag}) is a chordal graph. 

Theorem~9.2 of \cite{RatcliffeTschantz} states that, given $(W,S)$ and $(W,S')$ two saturated chordal Coxeter systems, then $W$ and $W'$ are isomorphic if and only if the complete Coxeter graph $\CCG{W,S}$ can be deformed into a labeled graph isomorphic to $\CCG{W',S'}$ by a finite sequence of twists and cross-eyed twists. This result also relies on a theorem by Howlett and M\"uhlherr \cite{HowlettMuehlherr}; see Theorem~9.1 in \cite{RatcliffeTschantz}. 

Hence \cref{qst:isoProblem} is solvable within the class of chordal Coxeter groups. Even more is true: the Reachability \cref{qst:reachability} has a positive answer in the case of chordal graphs with a short list of moves: blow-ups and -downs, twists, and cross-eyed twists.

\subsubsection{Final remarks}

So far we have been able to completely determine how the Coxeter galaxy looks like up to its third layer, resorting essentially to three power tools: matchings of maximal spherical parabolics, finite quotients and profinite rigidity, and geometric actions. Up until $\mc{G}_{\leq 3}$ there are barely any horizontal moves, i.e., edges connecting Coxeter systems in the same layer. There is essentially a single type of horizontal move, namely the edges that arise from blow-ups or -downs. Though as seen in Section~\ref{subsec:twists} (after \cite{BMMN}) the galaxy becomes considerably more complicated from layer four onwards since twists come into play, giving new types of horizontal edges. Nevertheless, a considerable part of the galaxy --- in arbitrary layers --- consists of isolated points due to strong rigidity.


\section*{Acknowledgments}
We thank Piotr~Przytycki for pointing out some references. We are indebted to Olga Varghese and the anonymous referee for invaluable remarks, particularly towards correcting and improving the proofs of Theorems~\ref{thm:ProfRigHypRank3} and \ref{thm:analogueBCR}. 
This work was partially supported by the DFG priority program `\emph{Geometry at infinity}' SPP~2026 (Project~62) and the DFG research training group `\emph{MathCoRe}' RTG~2297, 314838170. 

\printbibliography

\end{document}